\numberwithin{equation}{section} 
\theoremstyle{plain}
\newtheorem{theorem}{Theorem}[section]
\newtheorem{lemma}[theorem]{Lemma}
\newtheorem{proposition}[theorem]{Proposition}
\theoremstyle{definition}
\theoremstyle{remark}
\newtheorem{remark}{Remark} 
\def\R{\mathbb{R}}
\def\N{\mathbb{N}}
\newcommand{\sg}[1]{{\color{blue}{#1}}} 
\newcommand{\eremk}{\hbox{}\hfill\rule{0.8ex}{0.8ex}}
\def\QT{Q_T}
\def\Rbb{\mathbb R}
\def\Nbb{\mathbb N}
\def\Pbb{\mathbb P}
\newcommand{\Pp}[2]{\Pbb_{#1}\left(#2\right)}
\newcommand{\dpt}[1]{\partial_t {#1}}
\newcommand{\dptt}[1]{\partial_{tt} {#1}}
\def\Rbb{\mathbb{R}}
\def\x{\mathbf x}
\def\nablax{\nabla_{\x}}
\def\hx{\h_{\x}}
\def\Deltax{\Delta_{\x}}
\newcommand{\Norm}[1]{{\left\|{#1} \right\|}}
\newcommand{\SemiNorm}[1]{{\left|{#1} \right|}}
\def\dt{\mbox{dt}}
\def\dx{\mbox{d}\x \ }
\def\taun{\mathcal T_h}
\def\taunx{\taun^{\x}}
\def\E{K}
\def\Ex{\E_{\x}}
\def\tnmo{t_{n-1}}
\def\tn{t_n}
\def\h{h}
\def\hE{\h_\E}
\def\hEx{\h_{\Ex}}
\def\hExo{\h_{\E_{1,\x}}}
\def\hExt{\h_{\E_{2,\x}}}
\def\F{F}
\def\Fx{\F_\x}
\def\hFx{\h_{\Fx}}
\def\Fcalh{\mathcal F_\h}
\def\Fcalxh{\mathcal F_\h^\x}
\def\FcalE{\mathcal F^{\E}}
\def\FcalEx{\mathcal F^{\Ex}}
\def\Fcalxh{\mathcal F^{\x}_\h}
\def\Vh{V_\h}
\def\VhE{\Vh(\E)}
\def\nbf{\mathbf n}
\def\nbfF{\nbf_\F}
\def\nbfFx{\nbf_{\Fx}}
\def\malphaE{m_{\alpha}^\E}
\def\malphaEx{m_{\gamma}^{\Ex}}
\def\malphaF{m_{\beta}^\F}
\def\dS{\mbox{d}S}
\DeclareMathOperator{\DoF}{DoF}
\DeclareMathOperator{\DoFs}{DoFs}
\def\uh{u_h}
\def\uI{u_I}
\def\vh{v_h}
\def\wh{w_h}
\def\Yh{Y_\h}
\def\YE{Y(\E)}
\def\Xhtaun{X(\taun)}
\def\Ytaun{Y(\taun)}
\newcommand{\jump}[1]{\left[\!\left[#1\right]\!\right]}
\def\PiN{\Pi^N_\p}
\def\Pistar{\Pi^\star_\p}
\def\ah{a_\h}
\def\bE{b^\E}
\def\bh{b_\h}
\def\bhE{\bh^\E}
\def\ahE{\ah^\E}
\def\aE{a^\E}
\def\S{S}
\def\SE{\S^\E}
\def\Newton{\mathfrak N}
\def\Newtonh{\Newton_\h}
\def\NCh{\mathcal N_h}
\def\gammaI{\gamma_I}
\def\qp{q_\p}
\def\qpt{\qp^t}
\def\qpto{\qp^{t,1}}
\def\qptt{\qp^{t,2}}
\def\qptj{\qp^{t,j}}
\def\qpmo{q_{\p-1}}
\def\Scalptaun{\mathcal S_\p(\taun)}
\def\Scalelltaun{\mathcal S_\ell(\taun)}
\def\Pizpmo{\Pi^{0,\QT}_{\p-1}}
\def\PizIn{\Pi^{0,\In}_{\p-1}}
\def\PizoEx{\Pi^{0,\Ex}_0}
\def\PizE{\Pi^{0,\E}_{\p-1}}
\def\PizEx{\Pi^{0,\Ex}_{\p}}
\def\PizExz{\Pi^{0,\Ex}_{0}}
\def\PizF{\Pi^{0,\F}_{\p}}
\def\p{p}
\def\ctilde{\widetilde c}
\def\CP{C_P}
\def\CPE{\CP^\E}
\def\phih{\phi_\h}
\def\cH{c_H} 
\def\ctildeHE{\widetilde c_H^\E}
\def\nutildeE{\widetilde \nu^\E}
\def\Ehat{\widehat \E}
\def\cPiN{c_{\PiN}}
\def\cPistar{c_{\Pistar}}
\def\ctrace{c_{tr}}
\def\CPB{c_{PB}}
\def\Jcal{\mathcal J}
\def\JcalE{\Jcal^\E}
\def\In{I_n}
\def\Inmo{I_{n-1}}
\def\Inpo{I_{n+1}}
\def\htime{\h_{\In}}
\def\EcalY{\mathcal{E}^Y}
\def\EcalU{\mathcal{E}^U}
\def\EcalN{\mathcal{E}^N}
\def\EcalL{\mathcal{E}^L}
\date{}
\title{Space-time virtual elements for the heat equation}
\author{Sergio G\'omez\thanks{Department of Mathematics, University of Pavia, 27100 Pavia, Italy (sergio.gomez01@universitadipavia.it, 
andrea.moiola@unipv.it)},\thanks{Faculty of Informatics, Universit\`a della Svizzera italiana, Lugano, Switzerland (gomezs@usi.ch)}\;
Lorenzo Mascotto\thanks{Department of Mathematics and Applications, University of Milano Bicocca, 20125 Milan, Italy (lorenzo.mascotto@unimib.it)}
\thanks{Faculty of Mathematics, University of Vienna, 1090 Vienna, Austria (lorenzo.mascotto@univie.ac.at, ilaria.perugia@univie.ac.at)}
\thanks{IMATI-CNR, Pavia, Italy},\;
Andrea Moiola\footnotemark[1],\;
Ilaria Perugia\footnotemark[4]}
\begin{document}
\maketitle

\centerline{\today}

\begin{abstract}
\noindent We propose and analyze a space-time virtual element method for the discretization of the heat equation in a space-time cylinder,
based on a standard Petrov-Galerkin formulation.
Local discrete functions are solutions to a heat equation problem with polynomial data.
Global virtual element spaces are nonconforming in space,
so that the analysis and the design of the method
are independent of the spatial dimension.
The information between time slabs is transmitted by means of upwind terms
involving polynomial projections of the discrete functions.
We prove well posedness and optimal error estimates for the scheme,
and validate them with several numerical tests.
	
\medskip\noindent
\textbf{AMS subject classification}: 35K05; 65M12; 65M15.

\medskip\noindent
\textbf{Keywords}: virtual element methods; heat equation; space-time methods; polytopic meshes.
\end{abstract}

\section{Introduction} \label{section:introduction}

The virtual element method (VEM) was introduced in~\cite{Beirao-Brezzi-Cangiani-Manzini-Marini-Russo:2013} as an extension of the finite element method to general polytopic meshes
for the approximation of solutions to the Poisson equation.
Trial and test spaces consist of functions that are solutions to local problems related to the PDE problem to be approximated.
Moreover, they typically contain polynomials of a given maximum degree,
together with nonpolynomial functions allowing for the enforcement of the desired type of conformity in the global spaces.
Such functions are not required to be explicitly known.
Suitable sets of degrees of freedom ($\DoFs$) are chosen
so that projections from local VE spaces onto polynomial spaces can be computed out of them.
Such polynomial projectors and certain stabilizing bilinear form
are used to define the discrete bilinear forms.
A nonconforming version of the VEM was proposed in~\cite{nonconformingVEMbasic}.
Unlike its conforming counterpart, the nonconforming VEM can be presented in a unified framework for any dimension,
which significantly simplifies its analysis and implementation. 

In the VEM literature,
time dependent problems have always been tackled by combining
a VE discretization in space
with a time-stepping scheme
for the solution to the resulting ODE system.
The prototypical example is~\cite{VEMparabolic2015}, where the heat equation was considered.
On the other hand, space-time Galerkin methods are based on discretizing the space and time variables of a PDE at once.
These methods provide a natural framework where high-order accuracy can be obtained in both space and time,
and an approximate solution is available on the whole space-time domain.

In this paper, we design and analyze the first space-time VEM for the solution to a time-dependent PDE, namely, the heat equation;
we can consider spatial domains in one, two, and three dimensions.
We employ prismatic-type elements.
This allows us to distinguish two types of mesh facets:
\emph{space-like} facets, i.e.,
facets lying on hyperplanes in 
space-time
that are perpendicular to the time axis;
\emph{time-like} facets, i.e., facets 
whose normals are perpendicular to the time axis.
The method we propose is based on a standard space-time variational formulation of the heat equation
in the space-time cylinder $Q_T = \Omega \times (0, T)$ with trial space $L^2(0, T; H^1_0(\Omega)) \cap H^1(0, T; H^{-1}(\Omega)$ and test space $L^2(0, T; H_0^1(\Omega))$; see \cite[Ch.~XVIII, Sec.~4.1]{Dautray-Lions:1992}. 

For a recent survey of space-time  discretizations of parabolic problems, we refer to~\cite{Langer_Steinbach_2019}.
In particular,
a continuous finite element discretization of the standard Petrov-Galerkin variational formulation is presented and analyzed in~\cite{steinbach2015CMAM}.
Additionally, we refer to~\cite{Andreev:2013}, \cite{Schwab_Stevenson_2009}, and~\cite{Stevenson_Westerdiep_2021}
for wavelet- or finite element-type discretizations based on a minimal residual Petrov-Galerkin formulation, and to~\cite{NeumuellerPhD} and~\cite{Cangiani_Dong_Georgoulis_2016} for discontinuous Galerkin approaches.
Motivated by the boundary integral operator analysis,
a continuous finite element method based on a fractional-order-in-time variational formulation was studied in~\cite{Steinbach_Zank_2020}.
Recently, the space-time first order system least squares (FOSLS)
formulation of~\cite{Bochev_Gunzburger_2009} has been revisited and analyzed;
see~\cite{Fuehrer_Karkulik_2021}, \cite{Gantner_Stevenson_2021},
and~\cite{Gantner_Stevenson_2022}. \medskip

\noindent
We summarize the main features of the proposed VEM.
\medskip

\begin{itemize}
\item Local VE spaces consist of functions that solve
a heat equation with polynomial data on each space-time element;
this makes the method particularly suitable for further extensions,
e.g., to its Trefftz variant. 
\item We consider tensor-product in time (prismatic) meshes but the VE spaces are not of tensor-product type.
Even for prismatic elements with simplicial bases,
the proposed VE spaces do not coincide with their standard tensor-product finite element counterparts.
\item Global VE spaces involve approximating continuity constraints across mesh facets.
More precisely,
we impose nonconformity conditions on time-like facets
analogous to those in~\cite{nonconformingVEMbasic} for the Poisson problem,
and allow for discontinuous functions in time.
Across space-like facets,
we transmit the information between consecutive time slabs by upwinding.
In the present VEM context, the upwind terms are defined
by means of a polynomial projection.
\item To keep the presentation and the analysis of the method as simple as possible, the details are presented for the particular case of space-time tensor-product meshes.
However, as discussed in Subsection~\ref{subsection:more-general-meshes} below,
the method can handle nonmatching time-like or space-like facets, which is greatly advantageous for space-time adaptivity. 
\end{itemize}
\medskip

\noindent
We summarize the advantages of the proposed
space-time VEM over standard space-time conforming finite element methods.
\medskip

\begin{itemize}
\item The nonconforming VEM setting is of arbitrary order
and its design is independent of the spatial dimension.
\item Nonmatching space-like and time-like facets, which
naturally stem from mesh adaptive procedures,
can be handled easily.
\item As the discrete spaces are discontinuous in time,
we can solve the global (expensive) problem
as a sequence of local (cheaper) problems on time slabs.
\item The definition of the local spaces allows for the construction of space-time discrete Trefftz spaces.
\end{itemize}
\medskip

\noindent The main advancements of this paper are the following.\medskip

\begin{itemize}
    \item We design a novel space-time VEM for the heat equation in any spatial dimension.
    \item We prove its well posedness and
    optimal \textit{a priori} error estimates.
    \item We validate numerically the theoretical results on some test cases. 
\end{itemize}
\medskip

\paragraph*{Notation}
We denote the first and second partial derivatives with respect to the time variable $t$ by~$\dpt{}$ and~$\dptt{}$, respectively,
and the spatial gradient and Laplacian operators by~$\nablax{}, \ \Deltax{}$, respectively.
Throughout the paper, standard notation for Sobolev spaces will be employed.
For a given bounded Lipschitz domain~$D \subset \R^d$ ($d \in \N$), $H^s(D)$ represents the standard Sobolev space of order $s \in \N$,
endowed with the standard inner product $(\cdot, \cdot)_{s,D}$, the seminorm $\SemiNorm{\cdot}_{s,D}$, and the norm~$\Norm{\cdot}_{s, D}$.
In particular, $H^0(D) := L^2(D)$, where $L^2(D)$ is the space of Lebesgue square integrable functions over~$D$
and $H_0^1(D)$ is the closure of $C_0^{\infty}(D)$ in the $H^1(D)$ norm.
Whenever~$s$ is a fractional or negative number, the Sobolev space $H^{s}(D)$ is defined by means of interpolation and duality.
The Sobolev spaces on~$\partial D$ are defined analogously
and denoted by~$H^s(\partial D)$, $s < 1$.

As common in space-time variational problems, we shall also use Bochner spaces of functions mapping a time interval $(a, b)$ into a Banach space $(Z, \Norm{\cdot}_Z)$, which
we denote by $H^s(a, b; Z)$, $s \in \N$.\medskip

\paragraph*{Structure of the paper}
In the remainder of this introduction,
we introduce the model problem (Subsection~\ref{subsection:model-problem}),
and regular sequences of meshes (Subsection~\ref{subsection:meshes}).
The new space-time VEM method is presented in Section~\ref{section:VEM}. Section~\ref{section:well-posedness} is dedicated to the well-posedness of the method,
while in Section~\ref{section:convergence-analysis} we present
an \emph{a priori} error analysis and prove quasi-optimal estimates for the $h$-version of the method.
We conclude this work with some numerical experiments in Section~\ref{section:numerics}
and some concluding remarks in Section~\ref{section:conclusion}.

\subsection{The model problem and its weak formulation} \label{subsection:model-problem}

We are interested in the approximation of
solutions to heat equation initial-boundary value problems
on the space-time domain~$\QT := \Omega \times (0, T)$,
where~$\Omega \subset \Rbb^d \ (d = 1,\ 2,\ 3)$ and~$T > 0$ 
denote a bounded Lipschitz spatial domain and a final time,
respectively.

Let~$f:\QT\to\Rbb$
denote the prescribed right-hand side.
We consider a positive constant volumetric heat capacity~$\cH$
and a positive constant scalar-valued thermal 
conductivity~$\nu$.
The strong formulation of the initial-boundary value problem for the heat equation reads:
Find a function~$u:\QT \to \Rbb$ (temperature) such that
\begin{equation} \label{continuous-strong}
\begin{cases}
\cH \dpt{u} - \nu \Deltax u = f \quad\text{ in } \QT;\\
u = 0  \quad\text{ on } \Omega \times \left\{0\right\};\qquad 
u = 0 \quad\text{ on } \partial \Omega \times (0, T).
\end{cases}
\end{equation}
See Remark~\ref{remark:initial-condition} below for more general initial and boundary conditions.

Introduce the function spaces
\begin{equation}\label{X-Y}
Y := L^2\left(0, T; H_0^1(\Omega)\right),
\quad
X := \left\{v \in Y \cap H^1\left(0, T; H^{-1}(\Omega)\right) 
        \mid v=0\ \text{in}\ \Omega\times\{0\}
        \right\},
\end{equation}
endowed with the norms
\begin{equation*}
\Norm{v}_{Y}^2 := 
\Norm{\nu^{1/2}\nablax v}_{0,\QT}^2,
\qquad\qquad
\Norm{v}_{X}^2 := 
\Norm{\cH\dpt{v}}_{L^2(0,T;H^{-1}(\Omega))}^2
                  + \Norm{v}_{Y}^2,
\end{equation*}
respectively. Here, we have used the following definition:
\begin{equation*}
\text{for any $\phi$ in ${L^2}(0,T;H^{-1}(\Omega))$,}\qquad
\Norm{\phi}_{L^2(0,T;H^{-1}(\Omega))}
:= \sup_{0\ne v \in Y} \frac{
\int_0^T\langle \phi,v\rangle\dt
}{\Norm{v}_{Y}},
\end{equation*}
where $\langle\cdot,\cdot\rangle$ denotes the duality between $H^1_0(\Omega)$ and $H^{-1}(\Omega)$.
Next, we define the space-time bilinear form
$b(\cdot,\cdot):X\times Y \to \Rbb$ as
\begin{equation} \label{continuous:bf:global}
b(u, v) := \int_0^T  \left(\langle\cH\dpt u, v\rangle
                                + 
\int_\Omega\nu \nablax u \cdot \nablax v\ \dx\right) \dt.
\end{equation}
The weak formulation of~\eqref{continuous-strong},
see, e.g., \cite{Dautray-Lions:1992}, reads as follows:
\begin{equation} \label{continuous-weak}
\text{Find } u \in X \text{ such that }
b(u,v)=
\int_0^T\langle f,v\rangle\dt 
\quad \quad \forall v \in Y.
\end{equation}
The following well-posedness result is valid;
see e.g., \cite[Cor.~2.3]{steinbach2015CMAM}.

\begin{proposition} \label{proposition:well-posedness-continuous}
If~$f$ belongs to~$L^2(0,T; H^{-1}(\Omega))$,
then the variational formulation~\eqref{continuous-weak} is well posed
with the \textit{a priori} bound
\begin{equation*}
\Norm{u}_{X}
\le 2\sqrt2 \Norm{f}_{L^2(0,T;H^{-1}(\Omega))}.
\end{equation*}
\end{proposition}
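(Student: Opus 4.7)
The plan is to invoke the Banach--Ne\v{c}as--Babu\v{s}ka theorem for the Petrov--Galerkin problem~\eqref{continuous-weak}, verifying the three standard conditions: continuity of $b(\cdot,\cdot)$ on $X \times Y$, an inf-sup condition from $X$ onto $Y$ with constant $1/(2\sqrt 2)$, and non-degeneracy in the test space $Y$. Once the inf-sup constant is pinned down, the \textit{a priori} estimate follows directly from applying it to the solution $u$ and bounding the right-hand side functional $v \mapsto \int_0^T\langle f,v\rangle\dt$ by $\Norm{f}_{L^2(0,T;H^{-1}(\Omega))}\Norm{v}_Y$.

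Continuity of $b(\cdot,\cdot)$ is essentially by construction of the norms: for $u \in X$ and $v \in Y$, Cauchy--Schwarz in time and space and the definition of the $H^{-1}$-norm of $\dpt u$ yield $|b(u,v)| \le \Norm{u}_X \Norm{v}_Y$. Non-degeneracy of $b(\cdot,\cdot)$ with respect to the test space is obtained by showing that if $b(u,v)=0$ for all $u \in X$, then $v=0$ in $Y$, which reduces to a backward-in-time heat problem and follows by duality from the inf-sup estimate (applied to the formal adjoint with terminal condition at $t=T$).

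The main step, and the only one that produces a nontrivial constant, is the inf-sup bound. Given $u \in X$, the standard strategy of Steinbach~\cite{steinbach2015CMAM} is to build an explicit test function $v = u + w \in Y$, where for a.e.\ $t \in (0,T)$ the function $w(\cdot,t)\in H_0^1(\Omega)$ solves the elliptic problem
\begin{equation*}
-\nu\Deltax w(\cdot,t) = \cH\dpt u(\cdot,t)
\quad\text{in } H^{-1}(\Omega),
\end{equation*}
so that by construction $\Norm{\nu^{1/2}\nablax w}_{0,\QT} = \Norm{\cH\dpt u}_{L^2(0,T;H^{-1}(\Omega))}$ and $\int_0^T \langle \cH\dpt u,w\rangle\dt = \Norm{\cH\dpt u}_{L^2(0,T;H^{-1}(\Omega))}^2$. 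Exploiting $u(\cdot,0)=0$ one gets $\int_0^T\langle \cH\dpt u, u\rangle \dt = \tfrac{\cH}{2}\Norm{u(\cdot,T)}_{0,\Omega}^2 \ge 0$, so combining the two contributions yields a lower bound on $b(u,v)$ in terms of $\Norm{u}_X^2$. Estimating $\Norm{v}_Y$ against $\Norm{u}_X$ through the triangle inequality and balancing the resulting quadratic inequality (using Young's inequality with an optimized parameter) produces the inf-sup constant $1/(2\sqrt 2)$.

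The main obstacle is the inf-sup step, specifically the explicit tracking of the constant $2\sqrt 2$: the construction of $w$ and the non-negativity of $\int_0^T\langle \cH\dpt u,u\rangle\dt$ are conceptually clean, but the sharp constant requires carefully balancing the three contributions (the two parts coming from $u+w$ against the norm of the test function) via an optimal parameter choice. Since this computation is carried out in~\cite[Cor.~2.3]{steinbach2015CMAM}, the proof can simply appeal to that reference.
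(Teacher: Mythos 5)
Your proposal is correct and follows the same route as the paper, which itself simply cites \cite[Cor.~2.3]{steinbach2015CMAM} for this result; your reconstruction of Steinbach's inf-sup argument (test function $u+w$ with $w$ the elliptic lift of $\cH\dpt u$, non-negativity of $\int_0^T\langle\cH\dpt u,u\rangle\dt$, and the balancing that yields the constant $1/(2\sqrt2)$) is accurate.
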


\begin{remark}[Inhomogeneous initial and boundary conditions] \label{remark:initial-condition}
Given~$(f,u_0)$ in $L^2(0,T;H^{-1}(\Omega))\times L^2(\Omega)$,
consider the following problem:
find $u\in Y\cap H^1(0,T;H^{-1}(\Omega))$ such that
\begin{equation}\label{eq:Non0IC}
\begin{cases}
\displaystyle{\int_0^T  \left(\langle\cH\dpt u, v\rangle 
+ \int_\Omega\nu \nablax u \cdot \nablax v\ \dx\right) \dt=\int_0^T\langle f,v\rangle\dt}
& \forall v \in L^2(0,T;H^{-1}(\Omega)) \\[0.2cm]
\displaystyle{\int_\Omega u(\cdot,0) w\,dx=\int_\Omega u_0 w\,dx}
& \forall w \in L^2(\Omega).
\end{cases}
\end{equation}
The well-posedness of problem~\eqref{eq:Non0IC}
is discussed, e.g., in~\cite[Sect.~5]{Schwab_Stevenson_2009}.

The case of inhomogeneous Dirichlet boundary conditions~$u=g$ on $\partial\Omega\times(0,T)$
can be dealt with assuming~$g$ in~$H^1(0,T;H^{1/2}(\partial\Omega))$. 
Denote by~$G:\QT\to\Rbb$ the solution to the family
of elliptic problems $-\nu\Deltax G (\cdot,t)=0$ in $\Omega$
with $G(\cdot,t)=g(\cdot,t)$ on $\partial\Omega$ for all $0\le t\le T$.
The function~$G$ belongs to~$H^1(0,T;H^1(\Omega))$,
since $\partial_t G$ solves a similar family of elliptic problems
with boundary data $\partial_t g$
in~$L^2(0,T;H^{1/2}(\partial\Omega))$).

For the case of inhomogeneous initial and boundary conditions, denote by~$w$ the solution to problem~\eqref{eq:Non0IC}
with source term $f-c_H\partial_t G$
and initial condition $u_0-g(\cdot,0)$.
Then, $u=G+w$ solves the inhomogeneous initial-boundary value problem with data $(f,u_0,g)$.
In particular, $u$ belongs to~$L^2(0,T;H^1(\Omega))\cap H^1(0,T; H^{-1}(\Omega))$.
\eremk
\end{remark}

\subsection{Mesh assumptions} \label{subsection:meshes}
For the sake of presentation, we stick to tensor-product-in-time meshes.
We postpone possible generalization to Subsection~\ref{subsection:more-general-meshes} below,
which are important, e.g., for an adaptive version of the scheme.

We consider a sequence of polytopic meshes~$\{\taun\}_h$ of~$\QT$.
We require that
\begin{itemize}
\item[(\textbf{G1})]
the space domain~$\Omega$ is split into a mesh~$\taunx$ of
non-overlapping $d$-dimensional polytopes
with straight facets;
the time interval~$(0,T)$ is split into~$N$ subintervals $\In:=(\tnmo,\tn)$ with knots~$0=t_0 < t_1 < \dots < t_N =T$;
each element~$\E$ in~$\taun$ can be written as $\Ex \times \In$, for
some $\Ex$ in~$\taunx$ and~$1\le n\le N$.
\end{itemize}
Essentially, assumption (\textbf{G1}) states that
(\emph{i}) each element is the tensor-product of a~$d$-dimensional polytope with a time interval;
(\emph{ii}) each element belongs to a time slab out of the~$N$ identified by the partition~$\{ t_n \}_{n=0}^N$;
(\emph{iii}) each time slab is partitioned by the same space mesh;
(\emph{iv}) all elements within the same time slab have the same extent in time.
\medskip

Given an element~$\E$ in~$\taun$, $\E=\Ex\times\In$,
we denote its diameter by~$\hE$ and
the diameter of~$\Ex$ by~$\hEx$,
and set $\htime:=\tn-\tnmo$.
We let~$\h := \max_{\E \in \taun} \hE$
and~$\hx := \max_{\E \in \taunx} \hEx$.
Furthermore, the set of all $(d-1)$-dimensional facets of~$\Ex$ is denoted by~$\FcalEx$, and
for any~$\Fx\in \FcalEx$ we define
\begin{equation*}
\hFx := 
\begin{cases}
\min\{\hEx, h_{\widetilde{\E}_{\x}}\} & \text{if}~\Fx = \Ex \cap \widetilde{\E}_{\x}\ \text{for some }\widetilde{\E}_{\x} \in \taunx, \\
\hEx & \text{if}~\Fx \subset \partial \Omega.
\end{cases}
\end{equation*}
For~$d=1$, $\Fx$ is a point and~$\int_{\Fx} v(x,t) \dS$
is equal to~$v(\Fx,t)$.
For each spatial facet~$\Fx $ in~$\FcalEx$, 
we introduce the time-like facet $\F:=\Fx \times \In$;
we collect all these time-like facets into the set~$\FcalE$.

We fix one of the two unit normal $d$-dimensional vectors associated with~$\Fx$
and denote it by~$\nbfFx$.
For~$d\ge1$, each time-like facet~$\F=\Fx\times\In$
lies in a $d$-dimensional hyperplane
with unit normal vector~$\nbfF := (\nbfFx, 0)$.

Next, we require further assumptions on the spatial mesh:
there exists $\gamma>0$ independent of the meshsize such that
\begin{itemize}
\item[(\textbf{G2})] each spatial element~$\Ex$ in~$\taunx$ is star-shaped with respect to a ball of radius~$\rho_{\Ex}$ with $\hEx \leq \gamma \rho_{\Ex}$
and the number of ($d-1$)-dimensional facets of~$\Ex$ is uniformly bounded
with respect to the meshsize;
\item[(\textbf{G3})] 
given two neighbouring elements~$\Ex$ and $\widetilde{\E}_{\x}$
of~$\taunx$,
we have that $\gamma^{-1} \h_{\widetilde{\E}_{\x}}
\le \hEx 
\le \gamma \h_{\widetilde{\E}_{\x}}$.
\end{itemize}
For a given space-time element $K \subset \R^{d + 1}$ and any space-like or
time-like facet $F \subset \partial K$,
we denote the space of polynomials of total degree at most~$p \in \mathbb N$ on~$K$ and~$F$
by~$\Pp{p}{K}$ and~$\Pp{p}{F}$, respectively.
For a given time interval $I$,
$\Pp{p}{I}$ denotes the space of polynomials in~$I$ 
of total degree at most~$\p$ in~$\mathbb N$.

Henceforth, for a positive natural number $k$, we define the spaces of broken $H^k$
functions over~$\taunx$ and $\taun$, respectively, by
\begin{equation*}
\begin{split}
H^k(\taunx) & :=
\left\{
v\in L^2(\QT) \mid  v_{|\Ex} \in H^k(\Ex) \ \; \forall \Ex \in \taunx
\right\}; \\
H^k(\taun) & :=
\left\{
v\in L^2(\QT) \mid \ v_{|\E} \in H^k(\E) \ \; \forall \E \in \taun
\right\}.
\end{split}
\end{equation*}
We denote the broken Sobolev~$k$ seminorm on~$\taun$ by~$\SemiNorm{\cdot}_{k, \taun}$
and the space of piecewise polynomials of degree at most~$\ell$ in~$\Nbb$ on~$\taun$ by~$\Scalelltaun$.

\section{The virtual element method} \label{section:VEM}
In this section, we introduce a VEM for the discretization of problem~\eqref{continuous-weak}
based on the regular meshes introduced in Subsection~\ref{subsection:meshes}.
First, local VE spaces are introduced in Subsection~\ref{subsection:VE-spaces} together with their degrees of freedom ($\DoFs$).
Based on the choice of such $\DoFs$, in Subsection~\ref{subsection:polynomial-projection},
we show that we can compute different polynomial projections
of the VE functions.
Such polynomial projections are instrumental in the design of the global VE spaces;
see Subsection~\ref{subsection:global-spaces}.
Likewise, in Subsection~\ref{subsection:discrete-bf}, we design computable discrete bilinear forms
and require sufficient properties that will allow us to prove the well posedness of the scheme,
introduced in Subsection~\ref{subsection:method},
as well as convergence estimates.
Finally, in Subsection~\ref{subsection:more-general-meshes}, we present more general types of meshes that can be used, e.g., in an adaptive framework.

\subsection{Local virtual element spaces} \label{subsection:VE-spaces}
We present a VE discretization of the infinite
dimensional spaces~$X$ and~$Y$ introduced in~\eqref{X-Y}.

Given an approximation degree~$p \in \Nbb$
and an element~$\E=\Ex\times I_n$ in~$\taun$,
we define the following local VE spaces:
\begin{equation} \label{local-VE-space}
\begin{split}
\VhE := 
& \Big\{v_h \in L^2(\E) \mid  
            \ctildeHE \dpt \vh - \nutildeE \Deltax \vh  \in \Pp{p-1}{\E},
            \quad \vh{}_{|\Ex \times \{\tnmo\}} \in \Pp{p}{\Ex};\\[0.2em]
& \qquad\qquad \nbfFx \cdot \nablax \vh{}_{|\F} \in \Pp{\p}{\F} \, \forall \F:=\Fx \times \In\text{ with } \Fx \in \FcalEx \Big\},
\end{split}
\end{equation}
where~$\ctildeHE := \htime$ and~$\nutildeE := \hEx^2$.

The space~$\VhE$ contains~$\Pp{p}{\E}$.
The degree~$\p$ in the Neumann boundary conditions is not necessary for this inclusion to be valid,
as~$\p - 1$ would be sufficient.
Nevertheless, the degree~$\p$ is crucial in the proof of the Poincar\'e-type inequality in Proposition~\ref{proposition:YE-seminorm-Poincare} below.

\begin{remark} \label{remark:Lions-Magenes}
Functions in~$\VhE$ solve a heat equation problem with polynomial source,
initial condition, and Neumann boundary conditions.
For this reason, $\VhE \subset L^2(\In; H^1(\Ex))$; see~\cite[Thm.~$4.1$ and Sect.~$4.7.2$ in Ch.~$3$]{Lions-Magenes:1972}
with standard modifications to deal with the inhomogeneous Neumann data.
\eremk
\end{remark}

\begin{remark} \label{remark:scaling-local-spaces}
As opposite to the standard VE setting~\cite{Beirao-Brezzi-Cangiani-Manzini-Marini-Russo:2013},
in definition~\eqref{local-VE-space}
we consider solutions to local problems involving
some scaling factors ($\ctildeHE$ and~$\nutildeE$).
The reason 
is that these local problems
involve differential operators of different orders.
By using a scaling argument and mapping the element~$\E$
into a ``reference'' element~$\Ehat = \widehat{I} \times \widehat{K}_{\widehat{\x}}$,
with $|\widehat{I}_n| = \text{diam}(\widehat{\E}_{\widehat{\x}}) = 1$,
the resulting reference space consists of solutions to a heat equation with both coefficients equal to~$1$.
This allows us to use equivalence of norms results when proving the stability of the scheme.
\eremk
\end{remark}

Let~$\{ \malphaE \}_{\alpha=1}^{\dim(\Pbb_{\p-1}(\E))}$,
$\{ \malphaF{} \}_{\beta=1}^{\dim(\Pbb_{\p}(\F))}$,
and~$\{ \malphaEx \}_{\gamma=1}^{\dim(\Pbb_{\p}(\Ex))}$
be any bases of~$\Pbb_{p-1}(\E)$, $\Pbb_{\p}(\F)$, and~$\Pbb_{\p}(\Ex)$.
We introduce the following set of linear functionals on $\VhE$:\medskip

\begin{itemize}
\item the \emph{bulk} moments
\begin{equation} \label{bulk-dofs}
\frac{1}{\vert \E \vert}
\int_{\In} \int_{\Ex} \vh \ \malphaE \dx \dt
\qquad \forall \alpha=1,\dots, \dim(\Pbb_{p-1}(\E));
\end{equation}
\item for all space-time facets~$\Fx \times \In = \F \in \FcalE$, the \emph{time-like} moments
\begin{equation} \label{vertical-dofs}
\frac{1}{\vert\F\vert}
\int_{\In}\int_{\Fx} \vh \ \malphaF \dS\ \dt
\qquad \forall \beta=1,\dots,\dim(\Pbb_{\p}(\F));
\end{equation}
\item the \emph{space-like} moments
\begin{equation} \label{bottom-dofs} 
\frac{1}{\vert\Ex\vert} 
\int_{\Ex} \vh(\cdot, \tnmo) \malphaEx \dx
\qquad \forall \gamma=1,\dots,\dim(\Pbb_{p}(\Ex)).
\end{equation}
\end{itemize}
Since functions $\vh\in\VhE$ are polynomials at time $t_{n-1}$,
then the integrals in~\eqref{bottom-dofs} are well defined. 
Moreover, the inclusion $\VhE\subset L^2(\In; H^1(\Ex))$, see Remark~\ref{remark:Lions-Magenes}, 
implies that the integrals in~\eqref{bulk-dofs} and~\eqref{vertical-dofs} are well defined as well.

We introduce the number of the functionals in~\eqref{bulk-dofs}--\eqref{bottom-dofs} as
\begin{equation*}
\# \DoFs
:= \dim(\Pbb_{\p-1}(\E))
+ \sum_{\F \in \FcalE}  \dim(\Pbb_{\p}(\F))
+ \dim(\Pbb_{\p}(\Ex)).
\end{equation*}
In the following lemma, we prove that the linear functionals~\eqref{bulk-dofs}--\eqref{bottom-dofs}
actually define a set of $\DoFs$ for~$\VhE$.
For convenience, we
denote the set of these
linear functionals by~$\{ \DoF_i \}_{i=1}^{\# \DoFs}$.

\begin{lemma} \label{lemma:unisolvence}
The linear functionals~\eqref{bulk-dofs}--\eqref{bottom-dofs} are a set of unisolvent $\DoFs$ for the space~$\VhE$.
\end{lemma}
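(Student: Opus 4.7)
The plan is to apply the standard VEM unisolvence recipe in two steps: match $\#\DoFs$ with $\dim(\VhE)$, and then show that a function in $\VhE$ whose DoFs all vanish must be identically zero.

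For the dimension count, I would exploit the data-to-solution map
\[
\vh \mapsto \bigl(\ctildeHE \dpt \vh - \nutildeE \Deltax \vh,\ \vh(\cdot,\tnmo),\ (\nbfFx \cdot \nablax \vh|_\F)_{\F \in \FcalE}\bigr),
\]
which, by the well-posedness of the heat equation with polynomial source, initial condition, and Neumann data invoked in Remark~\ref{remark:Lions-Magenes}, is a bijection between $\VhE$ and $\Pp{p-1}{\E} \times \Pp{p}{\Ex} \times \prod_{\F \in \FcalE} \Pp{p}{\F}$. Counting dimensions on the data space then yields $\dim(\VhE) = \#\DoFs$.

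For injectivity, suppose all DoFs of $\vh \in \VhE$ vanish. First, the space-like moments~\eqref{bottom-dofs} alone force $\vh(\cdot,\tnmo) = 0$: the trace belongs to $\Pp{p}{\Ex}$ and its moments against a basis of that same space are zero. I would then test the local equation $\ctildeHE \dpt \vh - \nutildeE \Deltax \vh = f_\E$ against $\vh$ itself on $\E$ and integrate by parts in space and time. The initial-trace boundary term vanishes by the previous step, the final-trace one produces $\tfrac{\ctildeHE}{2}\Norm{\vh(\cdot,\tn)}_{0,\Ex}^2$, and the spatial integration by parts yields $\nutildeE\Norm{\nablax \vh}_{0,\E}^2$ together with time-like facet contributions involving $g_\F := \nbfFx \cdot \nablax \vh|_\F \in \Pp{p}{\F}$. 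Since $f_\E \in \Pp{p-1}{\E}$ and $g_\F \in \Pp{p}{\F}$, the vanishing of the bulk moments~\eqref{bulk-dofs} and of the time-like moments~\eqref{vertical-dofs} sends the right-hand side to zero. Consequently $\nablax \vh \equiv 0$ on $\E$ and $\vh(\cdot,\tn) = 0$.

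The remaining delicate case is that $\vh$ now only depends on $t$, say $\vh(\x,t) = \phi(t)$. Plugging this in the defining PDE gives $\ctildeHE \phi'(t) = f_\E(\x,t) \in \Pp{p-1}{\E}$, which forces $\phi \in \Pp{p}{\In}$. Moreover, applied to functions of $t$ only, the bulk moments~\eqref{bulk-dofs} become $\int_{\In} \phi(t)\, r(t) \dt = 0$ for every $r \in \Pp{p-1}{\In}$ (since integration in $\x$ maps $\Pp{p-1}{\E}$ onto $\Pp{p-1}{\In}$). Combining $\phi(\tnmo) = 0$ with the factorisation $\phi(t) = (t-\tnmo)\psi(t)$, $\psi \in \Pp{p-1}{\In}$, and testing the orthogonality relation against $r = \psi$ yields $\int_{\In}(t-\tnmo)\psi(t)^2 \dt = 0$, hence $\psi \equiv 0$ and $\vh \equiv 0$. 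The main subtlety lies precisely in this last step: the energy identity reduces the problem to a spatially constant remainder, and one must rely on both the polynomial nature of the data and the fact that the space-like datum has full degree $\p$ (not $\p-1$) to eliminate the purely time-dependent mode.
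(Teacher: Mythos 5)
Your proposal is correct and follows the paper's argument almost verbatim: the dimension count via well-posedness of the local heat problem with polynomial data, then the energy identity obtained by testing the defining PDE against $\vh$ and using the bulk, time-like, and space-like moments to conclude $\nablax\vh=0$ and hence $\vh=\phi(t)\in\Pbb_\p(\In)$. The only genuine divergence is the final step: the paper kills the purely time-dependent mode by using the time-like moments \eqref{vertical-dofs} against monomials in $t$ up to degree $\p$ (so that $\phi\perp\Pbb_\p(\In)$ directly), whereas you use the bulk moments \eqref{bulk-dofs} (orthogonality to $\Pbb_{\p-1}(\In)$) combined with $\phi(\tnmo)=0$ and the factorization $\phi=(t-\tnmo)\psi$, which gives $\int_{\In}(t-\tnmo)\psi^2=0$. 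Both are valid; your variant shows the top-degree-in-$t$ time-like moments are not needed for this last step (their degree $\p$ is instead essential elsewhere, e.g.\ in Proposition~\ref{proposition:YE-seminorm-Poincare}), at the price of invoking the space-like moments twice.
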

\begin{proof}
Since the right-hand side and the initial and Neumann boundary conditions in~\eqref{local-VE-space} are independent of each other,
the dimension of~$\VhE$ is equal to the number of the linear functionals~\eqref{bulk-dofs}--\eqref{bottom-dofs}.
Thus, it suffices to prove that the set of these linear functionals is unisolvent.
In other words, we prove that, whenever~$\vh\in\VhE$ satisfies~$\DoF_i(\vh)=0$
for all $i=1,\dots,\# \DoFs$, then~$\vh=0$.

Thanks to the definition of the $\DoFs$~\eqref{bulk-dofs} and~\eqref{vertical-dofs}, we have
\begin{equation*}
\begin{split}
0
& = \int_{\In} \int_{\Ex} \vh \underbrace{\left( \ctildeHE \dpt{\vh} - \nutildeE \Deltax \vh \right)}_{\in \Pp{p-1}{K}} \dx\ \dt  
    + \nutildeE \sum_{\Fx \in \FcalEx} \int_{\In} \int_{\Fx} \vh \underbrace{\nbfFx \cdot \nablax \vh}_{\in \Pp{\p}{\F}} \dS\ \dt  \\
& = \frac{\ctildeHE}{2} \left( \Norm{\vh(\cdot, \tn)}_{0,\Ex}^2 - \Norm{\vh(\cdot, \tnmo)}_{0,\Ex}^2  \right)  
    + \nutildeE \Norm{\nablax \vh}_{0,\E}^2.
\end{split}
\end{equation*}
Furthermore, using the definition of the $\DoFs$~\eqref{bottom-dofs},
we have $\Norm{\vh(\cdot, \tnmo)}_{0,\Ex}^2=0$ and deduce
\begin{equation*}
\nutildeE \Norm{\nablax \vh}_{0,\E}^2 = 0 \quad\Rightarrow\quad \nablax \vh =0 \text{ in } \E
\quad\Rightarrow\quad \vh=\vh(t).
\end{equation*}
From the definition of the space~$\VhE$,
this implies that~$\partial_t \vh$ belongs to~$\Pbb_{\p-1}(I_n)$;
equivalently, $\vh$ belongs to~$\Pbb_\p(I_n)$.
On the other hand, we know that the moments~\eqref{vertical-dofs} are zero,
in particular when they are taken with respect to monomials up to degree $p$ in time only.
This implies~$\vh=0$.
\end{proof}

\subsection{Polynomial projections} \label{subsection:polynomial-projection}
Functions in the local VE space~$\VhE$ are not known in closed form.
However, if we have at our disposal the $\DoFs$ of a function~$\vh$ in~$\VhE$,
then we can compute projections onto polynomial spaces with given maximum degree.

First, for all~$\E=\Ex \times \In$ in~$\taun$ and~$\varepsilon>0$,
we define the operator
$\PiN:   H^{\frac12+\varepsilon}(\In; L^2(\Ex))    \cap L^2(\In; H^1(\Ex)) \to \Pbb_p(\E)$
as follows: for any
$v$ in~$H^{\frac12+\varepsilon}(\In; L^2(\Ex)) \cap L^2(\In; H^1(\Ex))$,
\begin{subequations} \label{PiN}
\begin{align}
\int_{\In}\int_{\Ex} \nablax \qp^\E \cdot \nablax \left(\PiN{v} - v \right) \ \dx \dt &= 0 \quad \forall \qp^\E \in \Pp{p}{\E}; 
\label{PiN-1} \\
\int_{\In} \int_{\Ex} \qpmo(t) \left(\PiN{v} - v \right) \dx \dt &= 0 \quad \forall \qpmo \in \Pp{p-1}{\In}; \label{PiN-2} \\
 \int_{\Ex} \left(\PiN{v}(\x,\tnmo) - v(\x, \tnmo) \right) \dx &= 0.    \label{PiN-3} 
\end{align}
\end{subequations}
We have~$\VhE \subset L^2(\In;H^1(\Ex))$;
see Remark~\ref{remark:Lions-Magenes}.
This and the fact that functions in~$\VhE$ restricted to the time~$\tnmo$ are polynomials
entail that we can define~$\PiN v$ also for~$v$ in~$\VhE$.

\begin{lemma} \label{lemma:PiN}
The operator~$\PiN$ is well defined. Moreover, for any~$\vh$ in~$\VhE$,
$\PiN{\vh}$ is computable via the $\DoFs$~\eqref{bulk-dofs}--\eqref{bottom-dofs}.
\end{lemma}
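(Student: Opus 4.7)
The plan is to establish two things separately: (i) the projection $\PiN v$ is uniquely determined by the three conditions \eqref{PiN-1}--\eqref{PiN-3}, and (ii) when the argument lies in $\VhE$, each of the defining right-hand sides can be evaluated using only the $\DoFs$~\eqref{bulk-dofs}--\eqref{bottom-dofs}.

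For well-posedness, I would begin by checking the dimensional count. Condition \eqref{PiN-1} provides $\dim(\Pp{p}{\E})-(\p+1)$ independent conditions (the kernel of $\nablax$ on $\Pp{p}{\E}$ is the space of polynomials depending only on~$t$), condition \eqref{PiN-2} provides $\p$ further conditions, and condition \eqref{PiN-3} adds one more, matching exactly $\dim(\Pp{p}{\E})$. It then suffices to prove uniqueness. Assuming $v=0$, I would first test \eqref{PiN-1} with $\qp^\E=\PiN v$ to obtain $\nablax\PiN v\equiv 0$ on~$\E$, so $\PiN v$ depends only on~$t$ and lies in~$\Pp{p}{\In}$. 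Condition \eqref{PiN-2}, after integrating in~$\x$, forces $\PiN v$ to be $L^2(\In)$-orthogonal to $\Pp{p-1}{\In}$, which pins it down (up to a multiplicative constant) to the Legendre polynomial of degree~$\p$ shifted to~$\In$. Since such a Legendre polynomial does not vanish at the endpoints of~$\In$, condition \eqref{PiN-3}, which reduces to $|\Ex|\PiN v(\tnmo)=0$, forces the constant to be zero and hence $\PiN v=0$.

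For computability, I would inspect the three conditions in turn for $v=\vh\in\VhE$. The right-hand side of \eqref{PiN-3} is a space-like moment with $\malphaEx\equiv 1$, hence one of the $\DoFs$~\eqref{bottom-dofs}. For \eqref{PiN-2}, note that $\qpmo(t)\in\Pp{p-1}{\In}\subset\Pp{p-1}{\E}$, so $\int_\E \qpmo \vh$ is a linear combination of bulk moments~\eqref{bulk-dofs}. For \eqref{PiN-1}, I would integrate by parts in space: since $\vh\in L^2(\In;H^1(\Ex))$ by Remark~\ref{remark:Lions-Magenes},
\begin{equation*}
\int_{\In}\int_{\Ex}\nablax\qp^\E\cdot\nablax\vh\dx\dt
= -\int_{\In}\int_{\Ex}\Deltax\qp^\E\,\vh\dx\dt
+ \sum_{\Fx\in\FcalEx}\int_{\In}\int_{\Fx}(\nablax\qp^\E\cdot\nbfFx)\,\vh\dS\ \dt.
\end{equation*}
The bulk contribution is computable via \eqref{bulk-dofs} because $\Deltax\qp^\E\in\Pp{p-2}{\E}\subset\Pp{p-1}{\E}$, and each facet contribution is computable via \eqref{vertical-dofs} because $\nablax\qp^\E\cdot\nbfFx|_\F\in\Pp{p-1}{\F}\subset\Pp{p}{\F}$. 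Since the involved polynomials depend on $\qp^\E$ alone, the coefficients of $\PiN\vh$ in any chosen basis of $\Pp{p}{\E}$ are obtained by solving a square linear system whose entries depend only on the element geometry, and whose right-hand side is a known linear combination of the $\DoFs$ of~$\vh$.

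The only subtle point, and hence the main thing I would double-check carefully, is the argument that the Legendre polynomial of degree~$\p$ on~$\In$ does not vanish at the endpoints: this is what allows \eqref{PiN-3} to eliminate the remaining one-dimensional kernel in the uniqueness step. Everything else is a routine integration by parts together with bookkeeping of polynomial degrees against the degrees prescribed in \eqref{bulk-dofs}--\eqref{bottom-dofs}.
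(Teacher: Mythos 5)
Your proposal is correct and follows essentially the same route as the paper's proof: the same dimension count, the same uniqueness argument via $\nablax\PiN v=0$, the shifted Legendre polynomial, and its nonvanishing at $\tnmo$, and the same identification of the defining conditions with the $\DoFs$ (your explicit integration by parts in space for~\eqref{PiN-1} is precisely what the paper's terse statement about bulk moments up to order $\p-2$ and time-like moments up to order $\p-1$ presupposes). No gaps; the extra bookkeeping you include is a welcome elaboration rather than a deviation.
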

\begin{proof}

In order to prove that~$\PiN$ is well defined,
we need to show that the number of (linear) conditions
in~\eqref{PiN-1}--\eqref{PiN-3} is equal to~$\dim(\Pbb_p(\E))$. 
As~\eqref{PiN-1} is void for all $\qp^\E\in\Pp{p}{\In}$, 
we have that the number of conditions in~\eqref{PiN-1}--\eqref{PiN-3}
is equal to~$\dim(\Pbb_p(\E))$.
We only need to show that they are linearly independent.

To this aim, assume that~$v=0$.
Conditions~\eqref{PiN-1} imply that~$\nablax \PiN v =0$, i.e.,
$\PiN v$ belongs to~$\Pbb_{p}(\In)$.
Let~$L_p(\cdot)$ be the Legendre polynomial of degree~$p$ over~$[-1,1]$.
Using conditions~\eqref{PiN-2}, we deduce that
there exists a constant~$c$ such that
\begin{equation*}
\PiN v = c L_p \left( \frac{2t-\tnmo-\tn}{\tn-\tnmo} \right).
\end{equation*}
Since condition~\eqref{PiN-3} entails~$\PiN v(\cdot,\tnmo) = 0$ and~$L_p(-1)\ne 0$,
we deduce~$c=0$, whence~$\PiN v=0$.
Therefore, the conditions are linearly independent and so~$\PiN$ is well defined.

As for the computability of~$\PiN\vh$ for~$\vh$ in~$\VhE$,
conditions~\eqref{PiN-1} and~\eqref{PiN-2} are available via the bulk moments~\eqref{bulk-dofs} (up to order~$\p-2$)
and the time-like moments~\eqref{vertical-dofs} (up to order~$\p-1$); 
condition~\eqref{PiN-3} is available via the lowest-order space-like moment in~\eqref{bottom-dofs}.
\end{proof}

Next, for all~$\E$ in~$\taun$,
we define the operator $\Pistar: 
\mathcal{C}^0(\In; L^2(\Ex)) \to \Pbb_p(\E)$
as follows:
for any~$v$ in~$
\mathcal{C}^0(\In; L^2(\Ex))$,
\begin{subequations} \label{Pistar}
\begin{align}
\int_{\In} \int_{\Ex} \qpmo^\E \left(\Pistar{v} - v \right) \dx \dt & = 0\quad \forall \qpmo^\E \in \Pp{p-1}{\E};\label{Pistar-1} \\
\int_{\Ex} \qp^{\Ex} \left(\Pistar{v}(\x, \tnmo)  - v(\x, \tnmo) \right) \dx & = 0 \quad \forall \qp^{\Ex} \in \Pp{p}{\Ex} \label{Pistar-2}.
\end{align}
\end{subequations}
Again, we have~$\VhE \subset L^2(\In;H^1(\Ex))$;
see Remark~\ref{remark:Lions-Magenes}.
This and the fact that functions in~$\VhE$ restricted to the time~$\tnmo$ are polynomials
entail that we can define~$\PiN v$ also for~$v$ in~$\VhE$.

\begin{lemma} \label{lemma:Pistar}
The operator~$\Pistar$ is well defined. 
Moreover, for any~$\vh$ in~$\VhE$,
$\Pistar{\vh}$ is computable via the $\DoFs$~\eqref{bulk-dofs}--\eqref{bottom-dofs}.
\end{lemma}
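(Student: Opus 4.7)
The plan is to follow the same template as Lemma~\ref{lemma:PiN}. First I would check that the number of linear conditions in~\eqref{Pistar-1}--\eqref{Pistar-2} equals $\dim(\Pp{p}{\E})$, then I would establish injectivity of the associated linear map on $\Pp{p}{\E}$ to conclude well-posedness, and finally I would inspect which $\DoFs$ are needed to evaluate the right-hand sides of~\eqref{Pistar-1}--\eqref{Pistar-2} when $v = \vh \in \VhE$.

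For the dimension count, the total number of constraints is $\dim(\Pp{p-1}{\E}) + \dim(\Pp{p}{\Ex})$. Since $\E \subset \R^{d+1}$ and $\Ex \subset \R^d$, Pascal's identity
\begin{equation*}
\binom{p+d}{d+1} + \binom{p+d}{d} = \binom{p+d+1}{d+1}
\end{equation*}
shows that this matches $\dim(\Pp{p}{\E})$, so well-posedness reduces to injectivity on the target polynomial space.

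For the injectivity step, I would set $v=0$ and denote $q := \Pistar v \in \Pp{p}{\E}$. Testing~\eqref{Pistar-2} with $\qp^{\Ex} = q(\cdot,\tnmo) \in \Pp{p}{\Ex}$ forces $q(\cdot,\tnmo) \equiv 0$ on $\Ex$, so $q$ factors as $q(\x,t) = (t - \tnmo)\, r(\x,t)$ with $r \in \Pp{p-1}{\E}$. Substituting this into~\eqref{Pistar-1} with $\qpmo^\E = r$ yields
\begin{equation*}
\int_{\In}\int_{\Ex} (t - \tnmo)\, r(\x,t)^2 \dx \dt = 0,
\end{equation*}
and since the weight $(t - \tnmo)$ is nonnegative on $\In$ and strictly positive away from $\tnmo$, this forces $r \equiv 0$ on $\E$, whence $q = 0$.

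For the computability part, the left-hand sides of~\eqref{Pistar-1}--\eqref{Pistar-2} involve only the unknown polynomial $\Pistar\vh$ and the chosen test polynomials, while the right-hand sides require the integral of $\vh$ against a polynomial of degree at most $p-1$ on $\E$, available from the bulk moments~\eqref{bulk-dofs}, and the integral of $\vh(\cdot,\tnmo)$ against a polynomial of degree at most $p$ on $\Ex$, available from the space-like moments~\eqref{bottom-dofs}; the time-like moments~\eqref{vertical-dofs} are not needed. The only step that requires genuine thought is the injectivity argument, but the factorization through $t - \tnmo$ makes it routine, so I do not expect any real obstacle.
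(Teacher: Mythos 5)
Your proof is correct, and while the overall template (dimension count, injectivity, inspection of which $\DoFs$ feed the right-hand sides) matches the paper, your injectivity argument is genuinely different. The paper tests~\eqref{Pistar-2} with $\Pistar v(\cdot,\tnmo)$ to kill the trace at $\tnmo$, then tests~\eqref{Pistar-1} with $\dpt{\Pistar v}$ and integrates in time to conclude $\Pistar v(\cdot,\tn)=0$, and finally shows $\Norm{\dpt{\Pistar v}}_{0,\E}=0$ by an integration by parts in time that uses both vanishing traces and the admissibility of $\dptt{\Pistar v}\in\Pp{p-2}{\E}$ as a test function. You instead factor $q=\Pistar v$ as $(t-\tnmo)\,r$ with $r\in\Pp{p-1}{\E}$ once the trace at $\tnmo$ vanishes, and test~\eqref{Pistar-1} with $r$ to get $\int_{\In}\int_{\Ex}(t-\tnmo)r^2=0$, concluding from the sign of the weight. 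Both arguments are sound; yours is shorter and avoids the two-step energy manipulation, at the price of the (easy but worth stating) observation that a polynomial on $\E$ vanishing identically on $\Ex\times\{\tnmo\}$ is divisible by $t-\tnmo$ with quotient of total degree one less. Your explicit Pascal-identity dimension count is also a welcome addition: the paper merely asserts the count by analogy with Lemma~\ref{lemma:PiN}. The computability discussion matches the paper exactly.
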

\begin{proof}
As in the proof of Lemma~\ref{lemma:PiN}, 
we
observe that the number of (linear) conditions
in \eqref{Pistar-1}--\eqref{Pistar-2}
is equal to
$\dim(\Pbb_p(\E))$.
Thus, it suffices to show that they are linearly independent.

Assume that $v=0$. Then,
taking $\qp^{\Ex} = \Pistar v(\x, \tnmo)$ in \eqref{Pistar-2}, we get $\Pistar v(\x, \tnmo) = 0$.
On the other hand, taking $\qpmo^\E = \dpt{\Pistar v}$ in \eqref{Pistar-1}, we get
\begin{equation*}
0 
= \frac{1}{2} \left(\Norm{\Pistar v(\cdot, \tn)}_{0,\Ex}^2 -  \Norm{\Pistar v(\cdot, \tnmo)}_{0,\Ex}^2  \right)
= \frac{1}{2} \Norm{\Pistar v(\cdot, \tn)}_{0,\Ex}^2.
\end{equation*}
In addition, we observe that
\begin{equation*}
\Norm{\dpt{\Pistar v}}_{0,\E}^2 
= \int_{\Ex} \Pistar v(\x, t) \,\dpt{\Pistar v}(\x, t) \dx \Big|_{t = \tnmo}^{\tn} - \int_{\In} \int_{\Ex} \Pistar v 
\underbrace{\dptt{\Pistar v}}_{\in \Pp{p - 2}{\E}} \dx \dt = 0.
\end{equation*}
This implies that~$\dpt{\Pistar v}=0$, which, together with~$\Pistar v(\cdot, \tn)=0$,
gives that $\Pistar v=0$.
Therefore, the conditions are linearly independent
and so~$\Pistar$ is well defined.

As for the computability of $\Pistar\vh$ for $\vh\in\VhE$, conditions~\eqref{Pistar-1} are available via the bulk $\DoFs$~\eqref{bulk-dofs}, and
conditions~\eqref{Pistar-2} are at disposal via the bottom space-like
$\DoFs$~\eqref{bottom-dofs}.
\end{proof}

We introduce other polynomial projectors:
for all~$\E$ in~$\taun$ and~$v$ in~$L^2(\E)$,
\begin{equation*}
\PizE: L^2(\E) \to \Pbb_{\p-1}(\E),\qquad
(\qpmo^\E, v-\PizE v)_{0,\E} = 0 \quad
\forall \qpmo^\E \in \Pbb_{\p-1}(\E);
\end{equation*}
for each temporal interval $\In$ and $v \in L^2(\In)$, 
\begin{equation*}
\PizIn: L^2(\In) \to \Pbb_{\p-1}(\In),\qquad
(\qpmo^{\In}, v-\PizIn v)_{0,\In} = 0 \quad
\forall \qpmo^{\In} \in \Pbb_{\p-1}(\In);
\end{equation*}
for each spatial element $\Ex$ and $v \in L^2(\Ex)$, 
\begin{equation*}
\PizoEx: L^2(\Ex) \to \R,\qquad
(q_0, v-\PizoEx v)_{0,\Ex} = 0 \quad
\forall q_0 \in \R;
\end{equation*}
for all time-like facet~$\F$ and~$v$ in~$L^2(\F)$,
\begin{equation*}
\PizF: L^2(\F) \to \Pbb_{\p}(\F), \qquad
(\qp^{\F}, v-\PizF v)_{0,\F} = 0 \quad
\forall \qp^{\F} \in \Pbb_{\p}(\F).
\end{equation*}
Given~$\vh$ in~$\VhE$,
the computability of the above projectors applied to $\vh$ follows from the definition of the $\DoFs$~\eqref{bulk-dofs}--\eqref{bottom-dofs}.
The projector~$\PizE$ induces the global piecewise $L^2$ projector~$\Pizpmo$ over~$\taun$.

The following polynomial inverse inequalities are valid.
\begin{lemma}\label{lemma:polynomial-inverse-projectors}
For any $p \in \N$, there exist positive constants~$\cPiN$ and~$\cPistar$ independent of $\htime$ and $\hEx$ such that,
for all~$\qp$ in~$\Pbb_\p(\E)$,
\begin{equation} \label{inverse:PiN}
\begin{split}
& \Norm{\qp}_{0,\E}^2 
+ \hEx^2    \Norm{\nablax\qp}_{0,\E}^2
+ \htime^2  \Norm{\dpt\qp}_{0,\E}^2\\
& \le \cPiN \left(  \hEx^2 \Norm{\nablax \qp}_{0,\E} ^2
            + \Norm{\PizIn \qp}_{0,\E} ^2
            + \htime \Norm{\Pi^{0,\Ex}_0 \qp(\cdot,\tnmo)}_{0,\Ex} ^2 \right)
\end{split}
\end{equation}
and
\begin{equation}  \label{inverse:Pistar}
\begin{split}
& \Norm{\qp}_{0,\E}^2 
+ \hEx^2    \Norm{\nablax\qp}_{0,\E}^2
+ \htime^2  \Norm{\dpt\qp}_{0,\E}^2
 \le \cPistar \left( \Norm{\PizE \qp}_{0,\E} ^2
                      +\htime \Norm{\qp(\cdot,\tnmo)}_{0,\Ex}^2 \right).
\end{split}
\end{equation}
\end{lemma}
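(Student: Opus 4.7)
The plan is to reduce both bounds to equivalences of norms on a single reference element via a standard scaling argument, and then to verify on the reference that the right-hand sides control the full $H^1$-type quantity on the left.

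\textbf{Scaling.} I map $\E = \Ex \times \In$ affinely onto a reference element $\Ehat := \hat{K}_{\hat{\x}} \times (0,1)$ with $\mathrm{diam}(\hat{K}_{\hat{\x}})=1$ and $\hat{K}_{\hat{\x}}$ satisfying the uniform shape-regularity of (\textbf{G2}). The Jacobian is $\hEx^d \htime$, $\nablax$ scales as $\hEx^{-1}$ and $\dpt{}$ as $\htime^{-1}$, while the $L^2$-projections commute with the pullback. Each term on both sides of \eqref{inverse:PiN} and \eqref{inverse:Pistar} therefore picks up the common factor $\hEx^d \htime$; the powers of $\hEx^2$ and $\htime$ in the statements are tuned for exactly this cancellation. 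It thus suffices to prove the reference-element versions with constants depending only on $p$ and $\gamma$.

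\textbf{Inequality \eqref{inverse:PiN} on $\Ehat$.} On the finite-dimensional space $\Pp{p}{\Ehat}$ I define the functional
\[
N_1(\hat{q})^2 := \Norm{\nabla_{\hat{\x}} \hat{q}}_{0,\Ehat}^2 + \Norm{\hat{\Pi}_{p-1}^{0,(0,1)} \hat{q}}_{0,\Ehat}^2 + \Norm{\hat{\Pi}_0^{0,\hat{K}_{\hat{\x}}} \hat{q}(\cdot,0)}_{0,\hat{K}_{\hat{\x}}}^2,
\]
and show it is a norm, mimicking the argument of Lemma~\ref{lemma:PiN}: if $N_1(\hat{q})=0$, the first summand forces $\hat{q}$ to depend only on $\hat{t}$; the second forces this univariate polynomial to be orthogonal to $\Pp{p-1}{(0,1)}$, hence a scalar multiple of the shifted Legendre polynomial $L_p(2\hat{t}-1)$; the third then evaluates to a scalar multiple of $L_p(-1) \neq 0$ and therefore vanishes. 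Equivalence of all norms on $\Pp{p}{\Ehat}$ yields the reference bound, and scaling back gives \eqref{inverse:PiN}.

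\textbf{Inequality \eqref{inverse:Pistar} on $\Ehat$.} I proceed analogously with
\[
N_2(\hat{q})^2 := \Norm{\hat{\Pi}_{p-1}^{0,\Ehat} \hat{q}}_{0,\Ehat}^2 + \Norm{\hat{q}(\cdot,0)}_{0,\hat{K}_{\hat{\x}}}^2,
\]
and show $N_2$ is a norm by repeating verbatim the uniqueness argument of Lemma~\ref{lemma:Pistar}: if $N_2(\hat{q})=0$, testing the orthogonality relation against $\partial_{\hat{t}} \hat{q} \in \Pp{p-1}{\Ehat}$ yields $\Norm{\hat{q}(\cdot,1)}_{0,\hat{K}_{\hat{\x}}}=0$; then integration by parts in $\hat{t}$, using that $\partial_{\hat{t}\hat{t}}\hat{q} \in \Pp{p-2}{\Ehat}$ and that both boundary terms vanish, gives $\partial_{\hat{t}} \hat{q}=0$; combined with $\hat{q}(\cdot,0)=0$ this forces $\hat{q}=0$. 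Norm equivalence on $\Pp{p}{\Ehat}$ and the scaling of Step~1 conclude the proof. The one delicate point, which is the main obstacle to writing the proof cleanly, is that the norm-equivalence constants must be uniform across the whole family of admissible reference shapes $\hat{K}_{\hat{\x}}$; this follows from (\textbf{G2}) by the standard compactness argument ubiquitous in the VEM literature, and I would cite it rather than repeat it.
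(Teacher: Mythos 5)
Your proposal is correct and follows essentially the same route as the paper, whose proof is a one-line appeal to exactly the three ingredients you develop: the scaling induced by the mesh regularity in (\textbf{G2}), the fact that the right-hand sides are norms on $\Pbb_p(\E)$ (which you verify by reusing the unisolvence arguments of Lemmas~\ref{lemma:PiN} and~\ref{lemma:Pistar}, as intended), and the equivalence of norms on finite-dimensional polynomial spaces. Your write-up simply makes explicit what the paper leaves implicit, including the uniformity of the equivalence constants over the family of rescaled spatial elements.
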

\begin{proof}
The assertion follows from the regularity
of the spatial mesh in assumption 
(\textbf{G2}),
the fact that the functionals on the right-hand side of~\eqref{inverse:PiN} and~\eqref{inverse:Pistar}
are norms for~$\Pbb_\p(\E)$,
and the equivalence of norms for spaces of polynomials with fixed maximum degree.
\end{proof}

The presence of the subscripts appearing in the inverse estimate constants~$\cPiN$ and~$\cPistar$
is to remind that the norms on the right-hand side of~\eqref{inverse:PiN} and~\eqref{inverse:Pistar}
are induced by the definition of the operators~$\PiN$ and~$\Pistar$.
\medskip

\subsection{Global virtual element spaces} \label{subsection:global-spaces}
We construct the global VE spaces in a nonconforming fashion.
To this aim, we introduce a jump operator on the time-like facets.
Each internal time-like facet~$\F$ is shared
by two elements~$\E_1$ and~$\E_2$
with outward pointing unit normal vectors~$\nbf_{\E_1}$ and~$\nbf_{\E_2}$,
whereas each boundary time-like facet belongs to the boundary of a
single element~$\E_3$ with outward pointing unit normal vector~$\nbf_{\E_3}$.
We denote the $d$-dimensional vector containing the spatial components
of the restriction of~$\nbf_{\E_j}$ to the time-like facet~$\F$ by~$\nbf_{\E_j}^F$.
Then, the 
normal
jump on each time-like facet~$\F$ is defined as
\begin{equation} \label{jump-vector}
\jump{v}_{\F}:=
\begin{cases}
v_{|\E_1} \nbf_{\E_1}^\F + v_{|\E_2} \nbf_{\E_2}^\F & \text{if } \F \text{ is an internal face};\\
v_{|\E_3} \nbf_{\E_3}^\F                            & \text{if } \F \text{ is a  boundary face}.
\end{cases}     
\end{equation}
On each time slab~$\In$, 
we introduce the nonconforming
Sobolev space of order~$p$ associated with the mesh~$\taunx$:
\begin{equation} \label{nonconforming-condition}
H^{1, nc}(\taunx; \In) :=
\Big\{
v\in L_2\left(\In; H^1(\taunx)\right) 
\Big|
\int_{\In} \int_{\Fx} \qp^\F \jump{v}_{\F} \cdot \nbfFx \dS\ \dt = 0 \quad \forall \qp^\F \in \Pp{\p}{\F} \Big\}.
\end{equation}
This allows us to define the VE discretization~$\Yh$ of
the space~$Y$ in~\eqref{X-Y}
as the space of functions that are possibly discontinuous in time across space-like facets and nonconforming as above in space:
\begin{equation*}
\begin{split}
\Yh 
:= \Big\{ \vh \in L^2(\QT)  \mid 
\vh{}_{|\E} \in \Vh(\E) \ \forall\E \in \taun,\
\vh{}_{|\taunx \times \In} \in H^{1, nc}(\taunx; \In) \ \forall n=1,\dots,N \Big\}.
\end{split}
\end{equation*}
The functions in the space~$X$ in~\eqref{X-Y} are continuous in time, namely,
\begin{equation}\label{eq:Xincl}
X \hookrightarrow \mathcal{C}^0([0, T]; L^2(\Omega));
\end{equation}
see e.g,~\cite[Thm.~25.5]{Wloka:1987}.
Nevertheless, we discretize it with $\Yh$ as well,
and impose the time continuity weakly through upwinding.
As functions in the local VE space~$\VhE$ are not known at the local final time~$\tn$,
the upwind fluxes are defined in terms of the traces of their polynomial projections~$\Pistar$; see~\eqref{eq:upwind} below.

\begin{remark}\label{rem:Xhspace1}
Due to the choice of the $\DoFs$,
one cannot define a continuous-in-time discretization of~$X$ with the local spaces $\VhE$.
If this were possible, then each VE function on~$\E=\Ex\times\In$
would be a polynomial of degree~$p$ at the local final time~$\tn$.
For general choices of the right-hand side, initial condition, and boundary conditions in~\eqref{local-VE-space}, this cannot be true.
\eremk
\end{remark}


\subsection{Discrete bilinear forms} \label{subsection:discrete-bf}
On each element~$\E$, define the local continuous bilinear form in $\VhE \times \VhE$ and seminorm
\begin{equation*}
\aE(\uh,\vh) := \nu (\nablax \uh, \nablax \vh)_{0,\E},
\qquad \qquad
\SemiNorm{\vh}_{\YE}^2 := \aE(\vh, \vh).
\end{equation*}
Next, we prove a local Poincar\'e-type inequality.
\begin{proposition} \label{proposition:YE-seminorm-Poincare}
If~$\vh$ belongs to~$\VhE$, $\E=\Ex\times\In$,
then~$\SemiNorm{\vh}_{\YE}=0$ if and only if~$\vh=\vh(t)$ belongs to~$\Pbb_\p(\In)$.
Moreover, there exists a positive constant~$\CPE$ independent of~$\htime$ and~$\hEx$ such that
\begin{equation} \label{particular-Poincare}
\inf_{\qpt \in \Pbb_\p(\In)} \Norm{\vh-\qpt}_{0,\E}
\le \CPE \hEx \Norm{\nablax \vh}_{0,\E}
\qquad \forall \vh \in \VhE.
\end{equation}
\end{proposition}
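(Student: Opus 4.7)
The proof splits naturally into the kernel characterization and the Poincar\'e-type inequality; I would handle them in that order, reducing the second to a scale-free statement on a reference element.

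For the kernel, I would reason directly from~\eqref{local-VE-space}. If $\SemiNorm{\vh}_\YE=0$, then $\nablax \vh\equiv 0$ on $\E$, so $\vh$ is a function of $t$ alone. The PDE condition in~\eqref{local-VE-space} then reduces to $\ctildeHE\,\dpt \vh\in\Pp{p-1}{\E}$, and since $\ctildeHE=\htime>0$ and $\dpt \vh$ depends only on $t$, this forces $\dpt \vh\in\Pp{p-1}{\In}$, hence $\vh\in\Pp{p}{\In}$. Conversely, any $\vh\in\Pp{p}{\In}$ trivially belongs to $\VhE$ and has zero spatial gradient, so its $\SemiNorm{\cdot}_\YE$ vanishes.

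For the inequality, I would apply the change of variables of Remark~\ref{remark:scaling-local-spaces} to map $\E$ onto a reference element $\widehat\E=\widehat{\E}_{\widehat\x}\times(0,1)$, where $\widehat{\E}_{\widehat\x}$ has unit diameter and is star-shaped with constant depending only on $\gamma$ by~(\textbf{G2}), and where the rescaled VE space $\widehat V$ has unit heat coefficients. The identities $\Norm{\vh}_{0,\E}^2=\hEx^d\htime\Norm{\widehat\vh}_{0,\widehat\E}^2$ and $\Norm{\nablax \vh}_{0,\E}^2=\hEx^{d-2}\htime\Norm{\nabla_{\widehat\x}\widehat\vh}_{0,\widehat\E}^2$ reduce~\eqref{particular-Poincare} to the scale-free claim $\inf_{\widehat q\in\Pp{p}{(0,1)}}\Norm{\widehat\vh-\widehat q}_{0,\widehat\E}\le \widehat C \Norm{\nabla_{\widehat\x}\widehat\vh}_{0,\widehat\E}$ on $\widehat\E$, with $\widehat C$ depending only on $\gamma$, $d$, and $p$. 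To prove this, I would first apply the classical Poincar\'e--Wirtinger inequality slicewise on the star-shaped $\widehat{\E}_{\widehat\x}$ and integrate in $\widehat t$, yielding $\Norm{\widehat\vh-\overline{\widehat\vh}}_{0,\widehat\E}\le C_{PW}(\gamma)\Norm{\nabla_{\widehat\x}\widehat\vh}_{0,\widehat\E}$ with $\overline{\widehat\vh}(\widehat t):=|\widehat{\E}_{\widehat\x}|^{-1}\int_{\widehat{\E}_{\widehat\x}}\widehat\vh\,d\widehat x$. Integrating the defining PDE of $\widehat V$ over $\widehat{\E}_{\widehat\x}$, using the divergence theorem and the condition that $\widehat n\cdot\nabla_{\widehat\x}\widehat\vh|_{\widehat F}\in\Pp{p}{\widehat F}$ on every time-like facet, shows that $\partial_{\widehat t}\overline{\widehat\vh}\in\Pp{p}{(0,1)}$, so $\overline{\widehat\vh}\in\Pp{p+1}{(0,1)}$; taking $\widehat q$ to be the $L^2(0,1)$-projection of $\overline{\widehat\vh}$ onto $\Pp{p}{(0,1)}$ then leaves only a top Legendre-coefficient residual.

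The main obstacle will be bounding this residual $\Norm{\overline{\widehat\vh}-\widehat q}_{0,\widehat\E}$ by $\Norm{\nabla_{\widehat\x}\widehat\vh}_{0,\widehat\E}$: its size equals the $L_p$-coefficient of $\partial_{\widehat t}\overline{\widehat\vh}$, which is a linear functional of the polynomial facet data $\widehat n\cdot\nabla_{\widehat\x}\widehat\vh$, and controlling it requires a polynomial trace/inverse inequality on $\widehat{\E}_{\widehat\x}$ with constants depending only on $\gamma$, $d$, and $p$. A cleaner alternative is to invoke the kernel characterization to reinterpret $\Norm{\nabla_{\widehat\x}\widehat\vh}_{0,\widehat\E}$ as a norm on the finite-dimensional quotient $\widehat V/\Pp{p}{(0,1)}$ and conclude by a standard Bramble--Hilbert-style compactness argument over the class of reference polytopes admissible under~(\textbf{G2}).
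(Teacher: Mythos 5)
Your argument is correct, and in its final form it is the same as the paper's: the kernel characterization follows directly from the definition of $\VhE$ exactly as you describe, and the inequality~\eqref{particular-Poincare} is obtained by the scaling of Remark~\ref{remark:scaling-local-spaces} combined with the equivalence, on the finite-dimensional rescaled space, of two seminorms sharing the kernel $\Pbb_\p(\In)$ --- which is precisely your ``cleaner alternative,'' including the need for uniformity of the constant over the class of reference polytopes admitted by (\textbf{G2}). The constructive detour via the slicewise Poincar\'e--Wirtinger inequality and the spatial mean $\overline{\widehat{v}}$ is a reasonable idea but, as you yourself flag, it does not close: the top Legendre coefficient of $\partial_{\widehat t}\overline{\widehat v}$ is driven by the boundary flux $\int_{\partial\widehat{\E}_{\widehat\x}}\widehat{\nbf}\cdot\nabla_{\widehat\x}\widehat v\,\dS$, and since $\nabla_{\widehat\x}\widehat v$ is \emph{not} polynomial in the bulk, the needed bound of this polynomial trace by $\Norm{\nabla_{\widehat\x}\widehat v}_{0,\widehat\E}$ is not a standard inverse or trace inequality; proving it would itself require a finite-dimensionality/compactness argument of the very kind you invoke in the fallback. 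So the detour adds no generality and can be dropped.
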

\begin{proof}
If~$\vh$ belongs to~$\VhE$ with~$\Norm{\nablax \vh}_{\YE}=0$, then~$\vh=\vh(t)$.
The definition of~$\VhE$ in~\eqref{local-VE-space} implies that~$\partial_t \vh$ belongs to~$\Pbb_{\p-1}(\In)$
or, equivalently, that $\vh$ belongs to~$\Pbb_\p(\In)$. The converse
is obviously true.

Inequality~\eqref{particular-Poincare} follows from the equivalence of
seminorms with the same kernel on finite dimensional spaces
and the scaling argument in Remark~\ref{remark:scaling-local-spaces}.
\end{proof}

We define $Y(\taun) := L^2(0, T; H^1(\taunx))$ and introduce the global broken seminorms
\begin{equation*}
\begin{split}
& \text{ for almost all } t, \quad \SemiNorm{v(\cdot,t)}_{1,\taunx}^2
:= \sum_{\Ex \in \taunx} \Norm{\nablax v(\cdot,t)}_{0,\Ex}^2;\\
& \SemiNorm{v}_{\Ytaun}^2
:= \int_0^T \nu \SemiNorm{v(\cdot,t)}_{1,\taunx}^2 \dt
= \sum_{\E \in \taun} \SemiNorm{v}_{\YE}^2.
\end{split}
\end{equation*}
\begin{proposition} \label{proposition:seminorm-is-norm}
The seminorm~$\SemiNorm{\cdot}_{\Ytaun}$ is a norm in~$\Yh$.
\footnote{In fact, $\SemiNorm{\cdot}_{\Ytaun}$ is a norm on~$Y + \Yh$.
So, for arguments in~$Y+\Yh$,
we shall denote it by~$\Norm{\cdot}_{\Ytaun}$.}
\end{proposition}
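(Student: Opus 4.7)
The plan is to show that the seminorm is definite, i.e., that $|v_h|_{Y(\taun)} = 0$ with $v_h \in Y_h$ forces $v_h = 0$. Positivity, homogeneity, and the triangle inequality are immediate from the same properties of $|\cdot|_{Y(K)}$, so only this implication needs work. The strategy is: first use the local Poincaré-type Proposition~\ref{proposition:YE-seminorm-Poincare} to reduce $v_h$ elementwise to a function depending only on $t$, then use the nonconformity condition~\eqref{nonconforming-condition} to glue these elementwise polynomials together across time-like facets within each time slab, and finally use the nonconformity across boundary time-like facets to pin the common value down to zero.

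In more detail, first I would fix a time slab $\In$ and an element $\E=\Ex\times\In$. Since $|v_h|_{\YE}=0$ implies $\nablax v_h = 0$ on $\E$, Proposition~\ref{proposition:YE-seminorm-Poincare} gives $v_h|_\E = v_h^\E(t)\in \Pp{p}{\In}$. Then, for any internal time-like facet $\F = \Fx\times\In$ shared by $\E_1$ and $\E_2$, the jump~\eqref{jump-vector} satisfies $\jump{v_h}_\F\cdot \nbfFx = v_h^{\E_1}(t)-v_h^{\E_2}(t)$ (up to an orientation sign). The key trick is to test the nonconformity condition~\eqref{nonconforming-condition} with the polynomial $q_p^\F(\x,t) := v_h^{\E_1}(t) - v_h^{\E_2}(t)$, which lies in $\Pp{p}{\In}\subset\Pp{p}{\F}$ since it has total degree at most $p$ on~$\F$. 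This yields
\begin{equation*}
0 = \int_{\In}\int_{\Fx}\left(v_h^{\E_1}(t) - v_h^{\E_2}(t)\right)^2 \dS\ \dt
= \vert\Fx\vert \int_{\In} \left(v_h^{\E_1}(t) - v_h^{\E_2}(t)\right)^2 \dt,
\end{equation*}
so $v_h^{\E_1}\equiv v_h^{\E_2}$ on $\In$.

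Next, for a boundary time-like facet $\F\subset\partial\Omega\times\In$ belonging to a single element $\E_3$, the jump collapses to $\jump{v_h}_\F\cdot\nbfFx = \pm v_h^{\E_3}(t)$; testing against $q_p^\F = v_h^{\E_3}(t)$ in~\eqref{nonconforming-condition} gives $v_h^{\E_3}\equiv 0$. Chaining elements via shared time-like facets and using the connectedness of $\Omega$ (so that every element in the slab is reachable by a path of neighbours from one touching $\partial\Omega$), I conclude $v_h\equiv 0$ on the whole slab, and repeating over all slabs yields $v_h=0$. The extension to $Y+\Yh$ claimed in the footnote follows by the same argument, since the traces of a function in $Y$ satisfy the nonconformity condition~\eqref{nonconforming-condition} trivially and the $H^1_0(\Omega)$ spatial regularity (together with $\nablax v = 0$) forces its $Y$-component to vanish.

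The only mildly delicate step is the choice of test polynomial on $\F$: one must recognize that a pure function of $t$ of degree $p$ qualifies as an element of $\Pp{p}{\F}$ on the $d$-dimensional facet $\F$, which uses crucially the convention that $\Pp{p}{\F}$ refers to \emph{total} degree. Everything else is a routine assembly of the local Poincaré inequality and the jump conditions; no additional regularity or approximation machinery is needed.
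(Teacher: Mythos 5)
Your proof is correct and takes essentially the same route as the paper's: reduce to $\vh{}_{|\E}=\vh^{\E}(t)\in\Pp{\p}{\In}$ via Proposition~\ref{proposition:YE-seminorm-Poincare}, then exploit that the nonconformity condition~\eqref{nonconforming-condition} is imposed up to degree~$\p$ (which is exactly why the paper insists on order~$\p$ rather than~$\p-1$) by testing each jump against itself on internal and boundary time-like facets and propagating zero through the slab --- the paper compresses all of this into a single sentence, and your write-up simply supplies the details. The one weak spot is your closing remark on $Y+\Yh$: for a general $v=v_Y+\vh$ in the sum space one cannot argue componentwise, since $\nablax v=0$ does not give $\nablax v_Y=0$ (the decomposition is not unique) and $v_{|\E}$ need not lie in $\Pp{\p}{\In}$, so orthogonality of the jump to $\Pp{\p}{\F}$ no longer identifies the elementwise values; this part would need a separate argument, though the paper's own proof does not address the footnote either and the proposition proper concerns only $\Yh$.
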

\begin{proof}
Given~$\vh$ in~$\Yh$,
we only have to prove that~$\SemiNorm{\vh}_{\Ytaun}=0$ implies~$\vh=0$.
The identity~$\SemiNorm{\vh}_{\Ytaun}=0$ implies that~$\SemiNorm{\vh}_{\YE}=0$ for all elements~$\E=\Ex\times\In$.
Using Proposition~\ref{proposition:YE-seminorm-Poincare},
we deduce that~$\vh{}_{|\E}$ only depends on time and belongs to~$\Pbb_\p(\In)$.
The assertion follows using the spatial nonconformity of the space~$\Yh$,
see~\eqref{nonconforming-condition}, which is up to order~$\p$.
\end{proof}

On each element~$\E$ in~$\taun$, $\E=\Ex\times\In$, let
\[
\SE: [\VhE+L^2(\In;H^1(\Ex))\cap \mathcal{C}^0(\In;L^2(\Ex))]^2
\to \Rbb
\]
be any symmetric positive semidefinite bilinear form
that is computable via the $\DoFs$
and satisfies the following properties:
\medskip

\begin{itemize}
\item for any~$\vh$ in~$\VhE \cap \ker(\PiN)$, we have that
\begin{equation} \label{property-1:stab}
\SE(\vh,\vh) = 0 \qquad \Longrightarrow \qquad \vh =0;
\end{equation}
\item the following bound is valid
with a positive constant~$\ctilde^*>0$ independent of $\htime$, $\hEx$, and~$\E$:
\begin{equation} \label{property-2:stab}
\SE(v,v) 
\le \ctilde^*
\left(\hEx^{-2} \Norm{v}_{0,\E}^2
 + \Norm{\nablax v}_{0,\E}^2
 + \hEx^{-2} \htime^2 \Norm{\dpt v}_{0,\E}^2 \right)
\qquad \forall v \in H^1(\E).
\end{equation}
\end{itemize}
Property~\eqref{property-1:stab} implies that $\SE(\cdot,\cdot)$ induces a norm in $\VhE \cap \ker(\PiN)$.
Another consequence of~\eqref{property-1:stab}
and the scaling argument in Remark~\ref{remark:scaling-local-spaces},
is that there exist two constants~$0 < c_* < c^*$ independent of~$\E$ such that
\begin{equation} \label{stab-prop-0}
    c_* \SemiNorm{\vh}_{\YE}^2
    \le \nu \SE(\vh,\vh)
    \le c^* \SemiNorm{\vh}_{\YE}^2
    \qquad \forall \vh \in \VhE \cap \ker(\PiN).
\end{equation}
In fact, the functional~$\SemiNorm{\cdot}_{\YE}$ is a norm
on~$\VhE \cap \ker(\PiN)$.

We define the discrete counterpart of the local bilinear forms~$\aE(\cdot,\cdot)$:
\begin{equation} \label{ahtildeE}
\ahE(\uh,\vh)
:=  \aE(\PiN\uh, \PiN \vh) 
    + \nu \SE( (I-\PiN)\uh, (I-\PiN)\vh ).
\end{equation}
\begin{lemma}
Property~\eqref{stab-prop-0} implies that there exist two constants~$0<\alpha_*<\alpha^*$ independent of~$\E$ such that
the following local stability bounds are valid:
\begin{equation} \label{stability-bounds:local}
\alpha_* \SemiNorm{\vh}_{\YE}^2
\le \ahE(\vh,\vh)
\le \alpha^* \SemiNorm{\vh}_{\YE}^2
\qquad\forall \vh\in \VhE.
\end{equation}
\end{lemma}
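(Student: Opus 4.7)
The plan is to follow the standard VEM stability argument, splitting $\vh$ into its $\PiN$--projection and the complementary part and then handling these two pieces separately: the projected part using the very definition of $\ahE$, and the complementary part via the norm equivalence \eqref{stab-prop-0}. The whole argument rests on a Pythagoras--type identity in the $\SemiNorm{\cdot}_{\YE}$ seminorm that comes from the defining orthogonality \eqref{PiN-1} of $\PiN$.

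More concretely, writing $\vh=\PiN\vh+(I-\PiN)\vh$, I would first unfold
\[
\ahE(\vh,\vh)=\aE(\PiN\vh,\PiN\vh)+\nu\,\SE((I-\PiN)\vh,(I-\PiN)\vh)
=\SemiNorm{\PiN\vh}_{\YE}^2+\nu\,\SE((I-\PiN)\vh,(I-\PiN)\vh).
\]
Because $\PiN\vh\in\Pp{p}{\E}$, the orthogonality \eqref{PiN-1} applied with $q_p^\E=\PiN\vh$ gives
$\nu(\nablax\PiN\vh,\nablax(I-\PiN)\vh)_{0,\E}=0$, and expanding $\SemiNorm{\vh}_{\YE}^2$ yields the Pythagoras identity
\[
\SemiNorm{\vh}_{\YE}^2
=\SemiNorm{\PiN\vh}_{\YE}^2+\SemiNorm{(I-\PiN)\vh}_{\YE}^2.
\]

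Next I would observe that $(I-\PiN)\vh\in\VhE\cap\ker(\PiN)$: indeed it lies in $\VhE$ because $\PiN\vh$ is a polynomial and $\Pbb_p(\E)\subset\VhE$, and it lies in $\ker(\PiN)$ because $\PiN$ is idempotent. This legitimates the use of \eqref{stab-prop-0} on the stabilization term, giving
\[
c_*\SemiNorm{(I-\PiN)\vh}_{\YE}^2
\le \nu\,\SE((I-\PiN)\vh,(I-\PiN)\vh)
\le c^*\SemiNorm{(I-\PiN)\vh}_{\YE}^2.
\]
Combining with the Pythagoras identity, the lower bound follows with $\alpha_*=\min(1,c_*)$ and the upper bound with $\alpha^*=\max(1,c^*)$, both clearly independent of $\E$ since $c_*,c^*$ are.

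The proof is essentially bookkeeping, and the only non-routine point worth stressing is the verification that $(I-\PiN)\vh\in\VhE\cap\ker(\PiN)$, which is what allows \eqref{stab-prop-0} to be invoked; once this is in place, the orthogonality \eqref{PiN-1} does all of the real work by turning the $\SemiNorm{\cdot}_{\YE}$ seminorm into a Hilbertian sum of its two orthogonal components.
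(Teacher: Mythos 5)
Your proof is correct and follows essentially the same route as the paper: unfold $\ahE$ by its definition, apply \eqref{stab-prop-0} to the stabilization term, and recombine via the Pythagoras identity induced by the orthogonality \eqref{PiN-1}, yielding $\alpha_*=\min(1,c_*)$ and $\alpha^*=\max(1,c^*)$. The only difference is that you make explicit two points the paper leaves implicit (the verification that $(I-\PiN)\vh\in\VhE\cap\ker(\PiN)$ and the derivation of the Pythagoras identity from \eqref{PiN-1}), which is a welcome addition rather than a deviation.
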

\begin{proof}
We only show the upper bound as the lower bound follows analogously leading to~$\alpha_* := \min(1,c_*)$.
We have
\begin{equation*}
\begin{split}
\ahE(\vh,\vh)
& = \nu \Norm{\nablax \PiN \vh}_{0,\E}^2 
        + \nu \SE( (I-\PiN)\vh, (I-\PiN)\vh )\\
& \le \SemiNorm{\PiN \vh}_{\YE}^2 
      + c^* \SemiNorm{(I-\PiN) \vh}_{\YE}^2
  \le \max(1, c^*) \left(\SemiNorm{\PiN \vh}_{\YE}^2
        +\SemiNorm{(I-\PiN) \vh}_{\YE}^2 \right).
\end{split}
\end{equation*}
Pythagoras' theorem implies
\begin{equation*}
\ahE(\vh,\vh)
\le  \max(1, c^*) \SemiNorm{\vh}_{\YE}^2.
\end{equation*}
This proves the upper bound in~\eqref{stability-bounds:local}
with $\alpha^* = \max(1, c^*)$.
\end{proof}

The global discrete bilinear form associated with the spatial Laplace operator reads
\begin{equation*}
\ah(\uh,\vh) 
:= \sum_{\E \in \taun} \ahE(\uh,\vh)
\qquad \forall \uh,\ \vh \in \Yh.
\end{equation*}
Taking into account Proposition~\ref{proposition:seminorm-is-norm},
an immediate consequence of~\eqref{stability-bounds:local} are
the global stability bounds
\begin{equation} \label{stability-bounds-1:global}
\alpha_* \Norm{\vh}_{\Ytaun}^2
\le \ah(\vh,\vh)
\le \alpha^* \Norm{\vh}_{\Ytaun}^2 \qquad\forall \vh\in \Yh.
\end{equation}
For sufficiently smooth functions,
we have the following upper bounds.

\begin{proposition} \label{proposition:weak-continuity}
For all~$v$ in~$H^1(\taun)$, the following local and global bounds are valid:
for all~$\E$ in~$\taun$,
\begin{equation} \label{continuity:continuous:0}
\ahE(v,v)
\le 3 \max(1,\ctilde^*) \nu
    \left(1 + (1 + \ctrace) \cPiN\right)
    \left( 
        \hEx^{-2} \Norm{v}_{0,\E}^2
        + \Norm{\nablax v}_{0,\E}^2
        + \hEx^{-2} \htime^{2} \Norm{\dpt v}_{0,\E}^2
    \right)
\end{equation}
and
\begin{equation} \label{continuity:continuous}
\ah(v,v)
\!\le\! 3 \max(1,\ctilde^*) \nu \left(1 \!+\! (1 \!+\! \ctrace) \cPiN \right)\!\! \sum_{\E\in\taun} \!
    \left( 
        \hEx^{-2} \Norm{v}_{0,\E}^2
        \!+\! \Norm{\nablax v}_{0,\E}^2
        \!+\! \hEx^{-2} \htime^{2} \Norm{\dpt v}_{0,\E}^2
    \right),
\end{equation}
where~$\ctilde^*$ is the stability constant in~\eqref{property-2:stab},
$\nu$ is the thermal conductivity,
$\ctrace$ is the constant appearing in the elemental trace (in time) inequality,
and~$\cPiN$ is the inverse estimate constant in~\eqref{inverse:Pistar}.
\end{proposition}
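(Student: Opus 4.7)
My plan is to bound $\ahE(v,v)$ by its two constituent pieces coming from the definition~\eqref{ahtildeE}: the \emph{consistency} contribution $\aE(\PiN v, \PiN v) = \nu \Norm{\nablax \PiN v}_{0,\E}^2$ and the \emph{stabilization} contribution $\nu \SE((I-\PiN)v, (I-\PiN)v)$. For the consistency term I would test the orthogonality condition~\eqref{PiN-1} against $\qp^\E = \PiN v$ and apply Cauchy--Schwarz to obtain $\Norm{\nablax \PiN v}_{0,\E} \le \Norm{\nablax v}_{0,\E}$; the resulting bound is already of the desired form, as it is controlled by one of the three terms appearing on the right-hand side of~\eqref{continuity:continuous:0}.

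For the stabilization term I would apply property~\eqref{property-2:stab} to $w := (I - \PiN) v \in H^1(\E)$, obtaining three $H^1$-type norms of $w$ with weights $\hEx^{-2}$, $1$, and $\hEx^{-2}\htime^2$. A triangle inequality then splits each of these into a contribution from $v$, which is already absorbed by the right-hand side of~\eqref{continuity:continuous:0}, and a contribution from $\PiN v$. Because $\PiN v \in \Pp{\p}{\E}$, the inverse inequality~\eqref{inverse:PiN} converts the three $H^1$-type norms of $\PiN v$ into the three ``reduced'' quantities on its right-hand side, namely the gradient, a $\PizIn$ term, and a $\PizoEx$ boundary term.

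Each of these reduced quantities must then be controlled by quantities involving $v$ alone. The gradient term reduces again to $\hEx^2 \Norm{\nablax v}_{0,\E}^2$ by the opening step. Condition~\eqref{PiN-2}, interpreted as the $L^2(\E)$ projection onto the subspace $\Pp{\p-1}{\In} \subset L^2(\E)$ of pure-time polynomials, yields the identity $\PizIn \PiN v = \PizIn v$, so that $\Norm{\PizIn \PiN v}_{0,\E} \le \Norm{v}_{0,\E}$. For the boundary term, condition~\eqref{PiN-3} gives $\PizoEx \PiN v(\cdot,\tnmo) = \PizoEx v(\cdot,\tnmo)$, so that $\Norm{\PizoEx \PiN v(\cdot,\tnmo)}_{0,\Ex} \le \Norm{v(\cdot,\tnmo)}_{0,\Ex}$; a standard time-trace inequality for $v \in H^1(\In;L^2(\Ex))$ then yields $\htime \Norm{v(\cdot,\tnmo)}_{0,\Ex}^2 \le \ctrace (\Norm{v}_{0,\E}^2 + \htime^2 \Norm{\dpt v}_{0,\E}^2)$, which is precisely the origin of the $\ctrace$ factor appearing in the target constant.

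Collecting the above estimates and tracking constants (one combines the contribution $\nu \Norm{\nablax v}_{0,\E}^2$ from consistency with the $\ctilde^*$-weighted contribution from stabilization, bounding $1 + 2\ctilde^*(1+(1+\ctrace)\cPiN)$ by $3 \max(1,\ctilde^*)(1+(1+\ctrace)\cPiN)$) yields the local bound~\eqref{continuity:continuous:0}. The global inequality~\eqref{continuity:continuous} then follows immediately by summation over $\E \in \taun$. The main subtle point that I anticipate is the correct interpretation of the operator $\PizIn$ when acting on a function defined on $\E$: reading it as the $L^2(\E)$ projection onto time-only polynomials is what makes condition~\eqref{PiN-2} deliver the identity $\PizIn \PiN v = \PizIn v$ needed to close the argument.
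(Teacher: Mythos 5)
Your proposal is correct and follows essentially the same route as the paper's proof: stability of $\PiN$ in the gradient seminorm via \eqref{PiN-1}, property \eqref{property-2:stab} applied to $(I-\PiN)v$, a triangle inequality, the inverse estimate \eqref{inverse:PiN} for $\PiN v$, the commutation identities coming from \eqref{PiN-2}--\eqref{PiN-3}, and a time-trace inequality, followed by summation over the elements. Your reading of $\PizIn$ as the $L^2(\E)$ projection onto time-only polynomials is indeed the intended one, and your constant bookkeeping reproduces the factor $3\max(1,\ctilde^*)(1+(1+\ctrace)\cPiN)$.
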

\begin{proof}
The stability of the~$\PiN$ projector entails
\begin{equation*}
\aE(\PiN v, \PiN v)
= \nu \Norm{\nablax \PiN v}_{0,\E}^2
\le \nu \Norm{\nablax v}_{0,\E}^2.
\end{equation*}
Using definition~\eqref{ahtildeE}
and bound~\eqref{property-2:stab},
we deduce
\begin{align}
\label{a-bound} 
& \ahE(v,v)
= \aE(\PiN v, \PiN v) + \nu \SE((I-\PiN)v, (I-\PiN)v) \le \max(1,\ctilde^*) \nu 
\\
\nonumber
&  \times \Big( \Norm{\nablax v}_{0,\E}^2
    + \hEx^{-2} \Norm{(I-\PiN)v}_{0,\E}^2 
        + \Norm{\nablax(I-\PiN)v}_{0,\E}^2
        + \hEx^{-2}\htime^{2} \Norm{\dpt(I-\PiN)v}_{0,\E}^2 \Big).
\end{align}
Using the polynomial inverse estimate~\eqref{inverse:PiN} with~$\qp = \PiN v$,
we can write
\begin{equation*}
\begin{split}
& \hEx^{-2} \Norm{\PiN v}_{0,\E}^2
  + \Norm{\nablax \PiN v}_{0,\E}^2
  + \hEx^{-2} \htime^{2} \Norm{\dpt \PiN v}_{0,\E}^2\\
& = \hEx^{-2}
    \left(\Norm{\PiN v}_{0,\E}^2
        + \hEx^2 \Norm{\nablax \PiN v}_{0,\E}^2
         + \htime^{2} \Norm{\dpt \PiN v}_{0,\E}^2 \right)\\
& \le \cPiN \hEx^{-2}
    \left(
        \hEx^2 \Norm{\nablax \PiN v}_{0,\E}^2
        + \Norm{\PizE \PiN v}_{0,\E}^2
        + \htime \Norm{\PizExz \PiN v (\cdot,\tnmo)}_{0,\E}^2
    \right).
\end{split}
\end{equation*}
The definition of~$\PiN$,
and the stability of the $L^2$ and~$\PiN$ projectors entail
\begin{equation*}
\begin{split}
& \hEx^{-2} \Norm{\PiN v}_{0,\E}^2
  + \Norm{\nablax \PiN v}_{0,\E}^2
  + \hEx^{-2} \htime^{2} \Norm{\dpt \PiN v}_{0,\E}^2\\
& \le \cPiN \hEx^{-2}
    \left( 
        \hEx^2 \Norm{\nablax v}_{0,\E}^2
        + \Norm{v}_{0,\E}^2
        + \htime \Norm{v (\cdot,\tnmo)}_{0,\E}^2 \right).
\end{split}
\end{equation*}
Applying a trace inequality along the time variable (with constant~$\ctrace$) on the last term yields
\begin{equation*}
\begin{split}
& \hEx^{-2} \Norm{\PiN v}_{0,\E}^2
  + \Norm{\nablax \PiN v}_{0,\E}^2
  + \hEx^{-2} \htime^{2} \Norm{\dpt \PiN v}_{0,\E}^2\\
& \le (1+\ctrace) \cPiN 
    \left( 
        \hEx^{-2} \Norm{v}_{0,\E}^2
        + \Norm{\nablax v}_{0,\E}^2
        + \hEx^{-2} \htime^{2} \Norm{\dpt v}_{0,\E}^2
    \right).
\end{split}
\end{equation*}
We insert this bound into~\eqref{a-bound} after applying the triangle inequality and obtain \eqref{continuity:continuous:0}. Adding over all elements gives \eqref{continuity:continuous}.
\end{proof}
Here and in the following, for a given~$v$ in~$L^2(Q_T)$,
we shall write 
\begin{equation*}
v^{(n)}:=v{}_{|_{\Omega\times I_{n}}}
\qquad
\text{for all } n=1,\dots,N.
\end{equation*}
For all $\uh,\,\vh$ in~$\Yh$ and $\E$ in~$\taun$,
$\E=\Ex\times\In$, we set
\small{\begin{equation}\label{eq:upwind}
\bhE(\uh,\vh):=\begin{cases}
&\!\!\!\!\! \cH(\dpt \Pistar \uh,\vh)_{0,\E}
    + \ahE(\uh,\vh)
    +\cH\left(\Pistar \uh^{(1)}(\cdot,0),\vh^{(1)}(\cdot,0)\right)_{0,\Ex}
    \hfill \text{if}\ n=1;\\[0.2cm]
&\!\!\!\!\! \cH(\dpt \Pistar \uh,\vh)_{0,\E}
    + \ahE(\uh,\vh)\\[0.2cm]
    & + \cH\left(\Pistar \uh^{(n)}(\cdot,\tnmo) - \Pistar\uh^{(n-1)}(\cdot,\tnmo),\vh^{(n)}(\cdot,\tnmo) \right)_{0,\Ex} \quad
        \hfill \text{if}\ 2\le n\le N.                    
\end{cases}
\end{equation}}\normalsize{}
The bilinear form $\bhE(\cdot, \cdot)$ is computable through the $\DoFs$.
Actually, $\Pistar \uh^{(n)}(\cdot,\tnmo) =  \uh^{(n)}(\cdot,\tnmo)$
for~$1\le n\le N$ by the  definition of~$\Pistar$ in~\eqref{Pistar} and the definition of the local VE spaces.

We define the discrete counterpart of the global bilinear
form~$b(\cdot,\cdot)$
introduced in~\eqref{continuous:bf:global} as follows:
\begin{equation} \label{bh}
\bh(\uh,\vh)
:= \sum_{\E \in \taun} \bhE(\uh, \vh)
\qquad \forall \uh,\ \vh \in \Yh.
\end{equation}
The third terms in the definition of $\bhE(\uh,\vh)$  in~\eqref{eq:upwind}
stand for upwind fluxes for the weak imposition of the zero initial condition for~$n=1$,
or of time continuity for $2\le n\le N$.

\subsubsection{An admissible stabilization}
Consider the following stabilization, for~$\E=\Ex\times\In$:
\begin{equation} \label{explicit:stabilization}
\begin{split}
\SE(\uh,\vh)
& := \hEx^{-2} (\PizE \uh, \PizE \vh)_{0,\E}
   + \hEx^{-1} \sum_{\F \in \FcalE} (\PizF \uh, \PizF \vh)_{0,\F}\\
&\quad + \hEx^{-2}\htime \left(\uh(\cdot,\tnmo), \vh(\cdot,\tnmo) \right)_{0,\Ex}.
\end{split}
\end{equation}
This bilinear form is computable via the $\DoFs$.

\begin{proposition} \label{proposition:properties-explicit-stabilization}
The stabilization in~\eqref{explicit:stabilization} satisfies properties~\eqref{property-1:stab} and~\eqref{property-2:stab}.
\end{proposition}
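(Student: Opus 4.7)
The plan is to verify the two properties separately, both using routine arguments.

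For property~\eqref{property-1:stab}, I would simply unpack what $\SE(\vh,\vh)=0$ says termwise. Since each of the three summands in~\eqref{explicit:stabilization} is nonnegative, its vanishing forces $\PizE \vh = 0$ in $\E$, $\PizF \vh = 0$ on every $\F \in \FcalE$, and $\vh(\cdot,\tnmo) = 0$ in $L^2(\Ex)$. By definition of the $L^2$ projectors, these three statements are equivalent to the vanishing of, respectively, the bulk moments~\eqref{bulk-dofs}, the time-like moments~\eqref{vertical-dofs}, and (after testing $\vh(\cdot,\tnmo)=0$ against the basis of $\Pp{p}{\Ex}$) the space-like moments~\eqref{bottom-dofs}. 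Hence all $\DoFs$ of $\vh$ vanish, and unisolvence (Lemma~\ref{lemma:unisolvence}) yields $\vh = 0$. Note that the kernel condition $\vh\in\ker(\PiN)$ is in fact not needed here.

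For property~\eqref{property-2:stab}, I would bound each of the three summands by the right-hand side using only $L^2$-projection stability and standard trace inequalities, both valid for arbitrary $v \in H^1(\E)$. The bulk term satisfies $\hEx^{-2}\Norm{\PizE v}_{0,\E}^2 \le \hEx^{-2}\Norm{v}_{0,\E}^2$ by stability of the orthogonal projection. For the time-like facet term, I would apply a (dimension-independent) trace inequality on $\Ex$ giving $\Norm{v}_{0,\F}^2 \lesssim \hEx^{-1}\Norm{v}_{0,\E}^2 + \hEx \Norm{\nablax v}_{0,\E}^2$ for each $\F=\Fx\times\In\in\FcalE$; combined with $\Norm{\PizF v}_{0,\F}\le\Norm{v}_{0,\F}$ and the uniform bound on the number of facets from (\textbf{G2}), this yields a bound by $\hEx^{-2}\Norm{v}_{0,\E}^2 + \Norm{\nablax v}_{0,\E}^2$. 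For the space-like term, I would use the trace inequality along the time direction, $\Norm{v(\cdot,\tnmo)}_{0,\Ex}^2 \lesssim \htime^{-1}\Norm{v}_{0,\E}^2 + \htime\Norm{\dpt v}_{0,\E}^2$, which after multiplication by $\hEx^{-2}\htime$ produces exactly $\hEx^{-2}\Norm{v}_{0,\E}^2 + \hEx^{-2}\htime^2\Norm{\dpt v}_{0,\E}^2$. Summing the three contributions gives~\eqref{property-2:stab} with a constant $\ctilde^*$ depending only on the trace constants and the shape-regularity constant~$\gamma$.

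There is no real obstacle: the only delicate point is that both trace inequalities must be applied with explicit scaling in $\hEx$ and $\htime$, which is standard on the reference element described in Remark~\ref{remark:scaling-local-spaces} and transferred to $\E$ via (\textbf{G2}).
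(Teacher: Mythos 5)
Your proposal is correct and follows essentially the same route as the paper's (much terser) proof: property~\eqref{property-1:stab} because the three summands of~\eqref{explicit:stabilization} control all of the $\DoFs$ so that unisolvence applies, and property~\eqref{property-2:stab} by $L^2$-projection stability plus scaled trace inequalities on the time-like and space-like facets. Your added observations -- that the kernel condition $\vh\in\ker(\PiN)$ is not actually needed for~\eqref{property-1:stab}, and that the facet count from (\textbf{G2}) enters the constant -- are accurate refinements of the same argument.
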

\begin{proof}
Property~\eqref{property-1:stab} follows from the fact that~$\SE(\vh,\vh)$
involves the squares of~\emph{all} the $\DoFs$.
Furthermore, property~\eqref{property-2:stab} follows from the stability of the~$L^2$ projectors and the trace inequality applied to the time-like and space-like facet terms.
\end{proof}

As pointed out in Subsection~\ref{subsection:discrete-bf}, property~\eqref{property-1:stab}
and the scaling argument in Remark~\ref{remark:scaling-local-spaces}
imply that property~\eqref{stab-prop-0} is satisfied as well.

\subsection{The method} \label{subsection:method}
The VEM that we propose reads as follows:
\begin{equation} \label{VEM}
\text{Find } \uh \in \Yh \text{ such that }
\bh(\uh,\vh) = (f, \Pizpmo \vh)_{0,\QT} \qquad \forall \vh \in \Yh.
\end{equation}
The projector~$\Pizpmo$ makes the right-hand side computable and is~$L^2$ stable,
which is used in the proof of the well posedness of~\eqref{VEM} in Theorem~\ref{theorem:well-posedness} below.

Under assumption (\textbf{G1}),
the method can be solved in a time-marching fashion
by solving the counterpart of~\eqref{VEM} restricted to the time-slab~$\In$, for $n=1,\ldots, N-1$,
and then transmitting the information to the subsequent time-slab~$\Inpo$ through
upwinding.

\subsection{A glimpse on more general meshes} \label{subsection:more-general-meshes}
The reasons why we required assumption (\textbf{G1})
is that it is easier to present the construction of the VE spaces.
We refer to Figure~\ref{figure:meshes} (a) for an example of an admissible mesh in the sense of~(\textbf{G1}).
We can weaken this assumption along two different avenues:
we can allow for
\begin{itemize}
\item nonmatching time-like facets, see Figure~\ref{figure:meshes} (b);
\item nonmatching space-like facets, see Figure~\ref{figure:meshes} (c).
\end{itemize}

\begin{figure}[H]
\centering
\begin{minipage}{0.30\textwidth}
\begin{center}
\begin{tikzpicture}[scale=.7]
\draw[black, ->] (-0.5,0) -- (5,0);
\draw[black, ->] (0,-0.5) -- (0,5);
\draw (5,-.2) node[black, left] {\small x}; 
\draw (-.1,4.5) node[black, left] {\small t}; 
\draw[black, very thick, -] (0,0) -- (4,0) -- (4,4) -- (0,4) -- (0,0);
\draw[black, very thick, -] (1,0) -- (1,4);
\draw[black, very thick, -] (2,0) -- (2,4);
\draw[black, very thick, -] (3,0) -- (3,4);
\draw[black, very thick, -] (0,1) -- (4,1);
\draw[black, very thick, -] (0,2) -- (4,2);
\draw[black, very thick, -] (0,3) -- (4,3);
\end{tikzpicture}
\end{center}
\end{minipage}
\begin{minipage}{0.30\textwidth}
\begin{center}
\begin{tikzpicture}[scale=.7]
\draw[black, ->] (-0.5,0) -- (5,0);
\draw[black, ->] (0,-0.5) -- (0,5);
\draw (5,-.2) node[black, left] {\small x}; 
\draw (-.1,4.5) node[black, left] {\small t}; 
\draw[black, very thick, -] (0,0) -- (4,0) -- (4,4) -- (0,4) -- (0,0);
\draw[black, very thick, -] (1,0) -- (1,4);
\draw[black, very thick, -] (2,0) -- (2,4);
\draw[black, very thick, -] (3,0) -- (3,4);
\draw[black, very thick, -] (0,1) -- (3,1);
\draw[black, very thick, -] (0,2) -- (4,2);
\draw[black, very thick, -] (0,3) -- (3,3);
\end{tikzpicture}\end{center}
\end{minipage}
\begin{minipage}{0.30\textwidth}
\begin{center}
\begin{tikzpicture}[scale=.7]
\draw[black, ->] (-0.5,0) -- (5,0);
\draw[black, ->] (0,-0.5) -- (0,5);
\draw (5,-.2) node[black, left] {\small x}; 
\draw (-.1,4.5) node[black, left] {\small t}; 
\draw[black, very thick, -] (0,0) -- (4,0) -- (4,4) -- (0,4) -- (0,0);
\draw[black, very thick, -] (1,0) -- (1,2);
\draw[black, very thick, -] (1,3) -- (1,4);
\draw[black, very thick, -] (2,0) -- (2,4);
\draw[black, very thick, -] (3,0) -- (3,2);
\draw[black, very thick, -] (3,3) -- (3,4);
\draw[black, very thick, -] (0,1) -- (4,1);
\draw[black, very thick, -] (0,2) -- (4,2);
\draw[black, very thick, -] (0,3) -- (4,3);
\end{tikzpicture}
\end{center}
\end{minipage}
\caption{(a) Left panel: a mesh satisfying assumption (\textbf{G1}).
(b) Central panel: a mesh with nonmatching time-like facets.
(c) Right panel: a mesh with nonmatching space-like facets.}
\label{figure:meshes}
\end{figure}
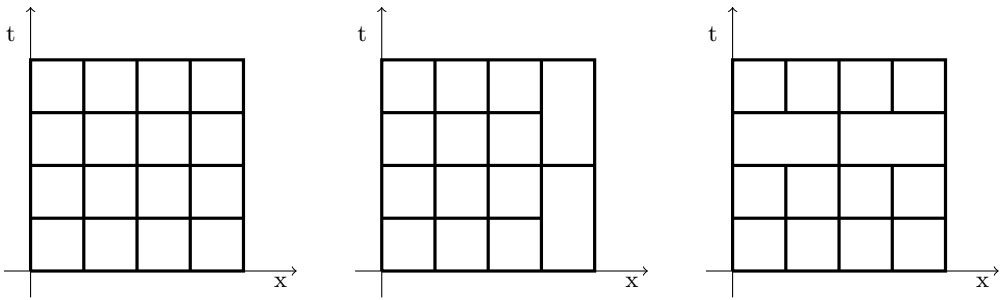

These generalizations are particularly convenient for space-time adaptivity, where nonmatching time-like and space-like facets typically occur.
In the definition of the corresponding VE spaces,
a few modifications would take place.
In the case of nonmatching time-like facets (as in Figure~\ref{figure:meshes}(b)),
VE functions have piecewise polynomial Neumann traces.
Nonmatching space-like facets
(as in Figure~\ref{figure:meshes}(c)) have no effect in the definition of the local VE spaces, see~\cite{Gomez-Mascotto-Perugia:2023} for more details.

\section{Well posedness of the virtual element method} \label{section:well-posedness}
In this section, we prove well posedness of the method in~\eqref{VEM}.
To this aim, we endow the trial space with a suitable norm, which is defined by means
of a VE Newton potential, see Subsection~\ref{subsection:VEM-Newton}.
In Subsection~\ref{subsection:discrete:inf-sup},
we prove a discrete inf-sup condition.
This proof extends that of~\cite[Theorem~$2.1$]{steinbach2015CMAM} to our setting,
where multiple variational crimes have to be taken into account.

Before that, we prove a global Poincar\'e-type inequality for functions in the space~$\Yh$.
\begin{proposition}\label{prop:global-Poincare}
Let assumptions (\textbf{G1})-(\textbf{G2}) be valid.
Then, there exists a positive constant~$\CP$ independent of the mesh size~$\h$ such that
\begin{equation} \label{global-Poincare}
\Norm{\vh}_{0,\QT} \le \CP \Norm{\vh}_{\Ytaun}
\qquad \forall \vh \in \Yh.
\end{equation}
\end{proposition}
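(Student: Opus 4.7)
The strategy is a duality argument adapted to the nonconforming space-time VE setting, mimicking the classical Poincar\'e--Friedrichs proof for nonconforming finite elements. The first step is to observe that both $\Norm{\cdot}_{0,\QT}^2$ and $\Norm{\cdot}_{\Ytaun}^2$ split additively across the time slabs $\Omega\times\In$, so it suffices to prove the bound within one slab, uniformly in~$n$; the multiplicative factor $\nu^{1/2}$ in the $Y(\taun)$-norm will be recovered at the end. For $\vh\in\Yh$ restricted to a single slab, I would introduce the auxiliary function $\psi\in L^2(\In;H_0^1(\Omega))$ defined by the dual problem $-\Deltax \psi(\cdot,t)=\vh(\cdot,t)$ in $\Omega$ with $\psi(\cdot,t)=0$ on $\partial\Omega$, for a.e.\ $t\in\In$. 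Provided $\Omega$ is regular enough, standard elliptic regularity gives $\psi\in L^2(\In;H^{1+s}(\Omega))$ for some $s\in(\tfrac12,1]$ with $\Norm{\psi}_{L^2(\In;H^{1+s}(\Omega))}\le C\,\Norm{\vh}_{0,\Omega\times\In}$.

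Writing $\Norm{\vh}_{0,\Omega\times\In}^2=(\vh,-\Deltax\psi)_{0,\Omega\times\In}$ and integrating by parts element-wise in space yields
\begin{equation*}
\Norm{\vh}_{0,\Omega\times\In}^2
= \int_{\In}\int_\Omega \nablax\vh\cdot\nablax\psi \dx \dt
- \sum_{\F}\int_\F (\jump{\vh}_\F\cdot\nbfFx)(\nbfFx\cdot\nablax\psi)\dS \dt,
\end{equation*}
the sum running over all time-like facets of $\taun$ in the slab. The bulk term is controlled by $C\,\SemiNorm{\vh}_{\Ytaun}\,\Norm{\vh}_{0,\Omega\times\In}/\nu^{1/2}$ via Cauchy--Schwarz and the $H^1$-bound on $\psi$. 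For the facet terms, the key observation is that condition~\eqref{nonconforming-condition} is exactly $\PizF(\jump{\vh}_\F\cdot\nbfFx)=0$, so one may replace $\nbfFx\cdot\nablax\psi$ by $(I-\PizF)(\nbfFx\cdot\nablax\psi)$ at no cost. Then, by Cauchy--Schwarz, a scaled trace inequality on the jump, the mesh regularity~(\textbf{G2}), and the approximation properties of the space-time projector $\PizF$ on $\F=\Fx\times\In$ combined with the $H^{1+s}$-regularity of $\psi$, each facet contribution is bounded by $C\,\SemiNorm{\vh}_{\Ytaun}\,\Norm{\vh}_{0,\Omega\times\In}/\nu^{1/2}$.

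Putting both contributions together gives $\Norm{\vh}_{0,\Omega\times\In}^2\le C\,\SemiNorm{\vh}_{\Ytaun}\,\Norm{\vh}_{0,\Omega\times\In}/\nu^{1/2}$, whence $\Norm{\vh}_{0,\Omega\times\In}\le C\,\SemiNorm{\vh}_{\Ytaun}/\nu^{1/2}$ by dividing through (the case $\vh\equiv 0$ in the slab being trivial). Summing over slabs yields the claimed estimate~\eqref{global-Poincare} with $\CP$ depending on $\Omega$ and $\nu$.

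The main obstacle I anticipate is the correct scaling of the facet contributions. The nonconformity condition~\eqref{nonconforming-condition} is only given in \emph{integrated} form over $\In$, not pointwise in $t$, so the argument must employ the \emph{full} space-time projector $\PizF$ onto $\Pp{\p}{\F}$; this in turn forces one to track the regularity of $\psi$ in both space (via elliptic regularity on the Lipschitz~$\Omega$) and time (via $\vh\in L^2(\In;L^2(\Omega))$), and to combine them in a scale-balanced way so that the resulting constant does not blow up with $h$. Should the Lipschitz regularity of $\Omega$ prove insufficient to push the approximation of $\PizF$ through, an alternative route is to construct an averaging (enriching) operator $E_h\colon\Yh\to L^2(0,T;H^1_0(\Omega))$ with suitable approximation and stability, and to split $\Norm{\vh}_{0,\QT}\le \Norm{\vh-E_h\vh}_{0,\QT}+\Norm{E_h\vh}_{0,\QT}$, bounding the first term by a scaled approximation estimate exploiting~\eqref{nonconforming-condition} and the second by the classical conforming Poincar\'e inequality applied to $E_h\vh$.
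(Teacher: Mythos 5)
Your proposal takes a genuinely different route. The paper does not solve a dual elliptic problem: it invokes, slab by slab, the broken Poincar\'e--Friedrichs inequality of Brenner \cite{brenner2003poincare}, which bounds $\Norm{\vh}_{0,\Omega\times\In}^2$ by the broken spatial gradient plus facet terms $\hFx^{-1}\int_{\In}\bigl(\int_{\Fx}\jump{\vh}\dS\bigr)^2\dt$ involving only the \emph{averages} of the jumps, and then controls those averages via Jensen's inequality, the orthogonality $\PizF\jump{\vh}=0$ coming from~\eqref{nonconforming-condition}, a spatial trace inequality, and the local VE Poincar\'e inequality~\eqref{particular-Poincare}. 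Your duality argument would re-derive such an inequality from scratch; the paper's choice buys the hard analytic step as a black box, with no elliptic-regularity hypothesis on~$\Omega$.

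There are two genuine gaps. The serious one is the treatment of $\Norm{\jump{\vh}}_{0,\F}$: a scaled trace inequality gives $\Norm{\vh}_{0,\F}^2\lesssim \hE^{-1}\Norm{\vh}_{0,\E}^2+\hE\Norm{\nablax\vh}_{0,\E}^2$, and the first term is precisely the quantity you are trying to bound, so the argument becomes circular --- the gain $h^{s-1/2}$ from approximating $\nbfFx\cdot\nablax\psi$ (with $s\le 1$) cannot absorb the $h^{-1/2}$ loss, and the facet contribution ends up of size $h^{s-1}\Norm{\vh}_{0,\Omega\times\In}^2\gtrsim\Norm{\vh}_{0,\Omega\times\In}^2$. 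What rescues this step is Proposition~\ref{proposition:YE-seminorm-Poincare}: since $\PizF\jump{\vh}=0$, one may subtract from $\vh{}_{|\E_j}$ its $L^2$-projection onto $\Pbb_\p(\In)$ before taking traces, and \eqref{particular-Poincare} gives $\Norm{\vh-\qpt}_{0,\E_j}\le\CPE\hEx\Norm{\nablax\vh}_{0,\E_j}$. This is a VE-specific ingredient (the kernel of $\SemiNorm{\cdot}_{\YE}$ on $\VhE$ is $\Pbb_\p(\In)$, not the constants), and it cannot be replaced by generic approximation properties of $\PizF$, because the jump of a virtual function is not explicitly known and has no a priori smoothness to exploit. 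The second, milder gap is the elliptic regularity $s>1/2$ needed for $\nablax\psi$ to have facet traces: on a general Lipschitz domain one only gets the borderline $s=1/2$; polytopal $\Omega$ (implicit in (\textbf{G1})) does give $s>1/2$, but with a domain-dependent constant that the paper's argument avoids entirely. Your fallback via an enriching operator is a standard viable alternative, but it too would require \eqref{particular-Poincare} to control $\Norm{\vh-E_h\vh}_{0,\QT}$.
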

\begin{proof}
It suffices to prove the counterpart of~\eqref{global-Poincare} over each time slab~$\In$.
On any time-like face $F$, we define the scalar jump~$\jump{\vh}$ as~$\jump{\vh}_\F \cdot \nbfF$.
\footnote{We have that~$\jump{\cdot}$ is a scalar function \label{footnote-jump}
whereas~$\jump{\cdot}_{\F}$ defined in~\eqref{jump-vector}
is a vector field.}
We start from the spatial Poincar\'e inequality 
in~\cite[Eqn.~(1.3) for~$d\ge2$ and Sect.~8 for~$d=1$]{brenner2003poincare} with constant~$\CPB$
and integrate it in time over the time slab~$\In$:
\begin{equation*}
\Norm{\vh}_{0, \Omega\times \In}^2
\le \CPB \Big( 
\sum_{\E \in \taun, \E \subset \Omega\times\In} 
\Norm{\nablax \vh}_{0,\E}^2
+ \sum_{\Fx \in\Fcalxh} 
 \hFx^{-1} \int_{\In} \Big(\int_{\Fx} \llbracket \vh \rrbracket \dS \Big)^2 \dt \Big).
\end{equation*}
For~$d=1$, the integral over the point~$\Fx$ is the evaluation at~$\Fx$.

To conclude, we have to estimate the second term on the right-hand side.
To this aim, we prove estimates on each time-like facet and then collect them together.
For simplicity, we further assume that~$\F =\Fx\times\In$ is an
internal (time-like) facet shared by two elements~$\E_1={\Ex}_{,1}\times\In$ and~$\E_2={\Ex}_{,2}\times\In$.
The case of a boundary time-like facet can be dealt with similarly.
Recall from the nonconformity of the space~$\Yh$, see~\eqref{nonconforming-condition},
that~$\PizF \llbracket \vh \rrbracket =0$.

Using Jensen's inequality, we write
\begin{equation*}
\hFx^{-1}
\int_{\In} \Big(\int_{\Fx} \llbracket \vh \rrbracket \dS \Big)^2 \dt
\le \int_{\In} \int_{\Fx} \llbracket \vh \rrbracket^2 \dS\ \dt
= \Norm{\llbracket \vh \rrbracket - \PizF \llbracket \vh \rrbracket}_{0,\F}^2.
\end{equation*}
Denote the~$L^2$ projection onto~$\Pbb_{\p}(\In)$
of the restriction of $\vh$ to~$\E_j$
by~$\qptj$, $j=1,2$.
Let~$\qpt$ be defined on~$\E_1 \cup \E_2$ piecewise as~${\qpt}_{|_{\E_j}}=\qptj$, $j=1,2$.
A standard trace inequality in space with constant~$\ctrace$,
and the local Poincar\'e inequality~\eqref{particular-Poincare}
give
\begin{equation*}
\begin{split}
& \hFx^{-1} \int_{\In} \Big(\int_{\Fx} \llbracket \vh \rrbracket \dS \Big)^2 \dt
\le \ctrace \left( \hExo^{-1} \Norm{\vh-\qpt}_{0,\E_1}^2
+ \hExo \Norm{\nablax(\vh-\qpt)}_{0,\E_1}^2  \right.\\
& \qquad\qquad\quad\qquad\quad\qquad\quad\qquad\quad\; \left. + \hExt^{-1}\Norm{\vh-\qpt}_{0,\E_2}^2 
       + \hExt \Norm{\nablax (\vh-\qpt)}_{0,\E_2}^2 \right)\\
& \le 2\ctrace (\max_{\E\in\taun} \CPE)^2
  \left( \hExo \Norm{\nablax(\vh-\qpt)}_{0,\E_1}^2
         + \hExt \Norm{\nablax(\vh-\qpt)}_{0,\E_2}^2 \right) \\
& \le 2\ctrace (\max_{\E\in\taun} \CPE)^2 \max(\hExo, \hExt)
 \sum_{j=1}^2 \Norm{\nablax \vh}_{0,\E_j}^2.
\end{split}
\end{equation*}
Summing over all the time-like facets of the $n$-th time slab
and recalling that the number of ($d-1$)-dimensional facets of each~$\Ex$
is uniformly bounded with respect to the meshsize,
see assumption (\textbf{G2}),
we get the assertion.
\end{proof}

\subsection{A virtual element Newton potential} \label{subsection:VEM-Newton}
We define a VE Newton potential~$\Newtonh: \Scalptaun \to \Yh$ as follows:
for any~$\phih$ in~$\Scalptaun$, $\Newtonh \phih$ in~$\Yh$ solves
\begin{equation}\label{Newton-VEM}
\begin{split}
\ah(\Newtonh \phih,\vh)  
&= \bh(\phih, \vh)-\ah(\phih, \vh)\\
&= \cH \Big[( \partial_t\phih, \vh )_{0,\QT} 
+\left(\phih^{(1)}(\cdot,0),\vh^{(1)}(\cdot,0)\right)_{0,\Omega}\\
&\qquad +\sum_{n=2}^{N}
\left(\phih^{(n)}(\cdot,\tnmo)
-\phih^{(n-1)}(\cdot,\tnmo),\vh^{(n)}(\cdot,\tnmo)\right)_{0,\Omega} \Big]
 \qquad \forall \vh \in \Yh.
\end{split}
\end{equation}
Thanks to the stability bounds~\eqref{stability-bounds-1:global},
the bilinear form~$\ah(\cdot, \cdot)$ is continuous and coercive,
and the continuity in the~$\Ytaun$ norm of the functional
on the right-hand side
of~\eqref{Newton-VEM}
follows from Proposition~\ref{prop:global-Poincare}.
Therefore, the VE Newton potential is well defined.

We introduce the following norm on the sum space $X + \Yh$:
for all~$v$ in $X + \Yh$,
\begin{equation} \label{norm:Xh}
\begin{split}
 \Norm{v}_{\Xhtaun}^2
& := \Norm{v}_{\Ytaun}^2
   + \Norm{\Newtonh (\Pistar v)}_{\Ytaun}^2
   + \frac{\cH}{2}\Big( \Norm{\Pistar v^{(1)}(\cdot,0)}_{0,\Omega}^2 \\
 & \quad 
+\sum_{n=2}^N\Norm{\Pistar v^{(n)}(\cdot,\tnmo) - \Pistar v^{(n-1)}(\cdot,\tnmo)}_{0,\Omega}^2
+\Norm{\Pistar v^{(N)}(\cdot,T)}_{0,\Omega}^2
 \Big).
 \end{split}
\end{equation}
Recalling the embedding $X \hookrightarrow \mathcal{C}^0(0,T; L^2(\Omega))$ in~\eqref{eq:Xincl},
we have that~$\Pistar$ in~\eqref{Pistar} is well-defined for functions in~$X$.
In Section~\ref{section:convergence-analysis} below, we shall present the convergence analysis of the method
with respect to the~$\Norm{\cdot}_{\Xhtaun}$ norm.

\subsection{A discrete inf-sup condition and well posedness of the method} \label{subsection:discrete:inf-sup}
In this section, we prove a discrete inf-sup condition
in the spaces~$(\Yh,\Norm{\cdot}_{\Xhtaun})$ for the trial functions
and~$(\Yh,\Norm{\cdot}_{\Ytaun})$ for the test functions.
\begin{proposition} \label{proposition:discrete:inf-sup}
There exists a positive constant~$\gammaI$ independent of~$\taun$
such that
\begin{equation} \label{discrete:inf-sup}
\sup_{0\ne \vh \in \Yh}
\frac{\bh(\uh,\vh)}{\Norm{\vh}_{\Ytaun}}
\ge
\gammaI\Norm{\uh}_{\Xhtaun}
\qquad\forall \uh \in \Yh.
\end{equation}
\end{proposition}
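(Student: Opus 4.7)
The plan is to adapt to the discrete setting the Steinbach-type argument for the continuous parabolic inf-sup condition~\cite{steinbach2015CMAM}: given $\uh\in\Yh$, I would test with
\[
\vh := \uh + w_h, \qquad w_h := \Newtonh(\Pistar \uh)\in\Yh,
\]
which is well defined since $\Pistar\uh\in\Scalptaun$. This is the natural discrete analogue of $u+N\partial_t u$ used in the continuous proof, and it is precisely the combination against which the norm $\Norm{\cdot}_{\Xhtaun}$ in~\eqref{norm:Xh} is calibrated.

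\textbf{Step 1 (key identity).} I would first establish
\[
\bh(\uh,\vh)=\ah\bigl(\uh+\Newtonh(\Pistar\uh),\vh\bigr)\qquad\forall\uh,\vh\in\Yh.
\]
By~\eqref{eq:upwind}--\eqref{bh}, the time-derivative and upwind parts of $\bh(\uh,\vh)$ depend on $\uh$ only through $\Pistar\uh$. Since $\Pistar\uh$ is piecewise polynomial of degree at most $p$, one has $\Pistar(\Pistar\uh)=\Pistar\uh$, and comparing with~\eqref{Newton-VEM} applied to $\phih:=\Pistar\uh$ gives exactly $\ah(\Newtonh(\Pistar\uh),\vh)$.

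\textbf{Step 2 (identification of the cross term).} Testing Step~1 with $\vh=\uh$ yields the cornerstone relation
\[
\ah(w_h,\uh)=\bh(\uh,\uh)-\ah(\uh,\uh)=:\mathcal J(\uh),
\]
and I claim that $\mathcal J(\uh)$ coincides with the boundary/jump contribution in $\Norm{\uh}_{\Xhtaun}^2$, namely
\[
\mathcal J(\uh)=\tfrac{\cH}{2}\!\left(\Norm{\Pistar\uh^{(1)}(\cdot,0)}_{0,\Omega}^2+\sum_{n=2}^{N}\Norm{\Pistar\uh^{(n)}(\cdot,\tnmo)-\Pistar\uh^{(n-1)}(\cdot,\tnmo)}_{0,\Omega}^2+\Norm{\Pistar\uh^{(N)}(\cdot,T)}_{0,\Omega}^2\right).
\]
To prove this I would proceed elementwise: (i)~since $\partial_t\Pistar\uh\in\Pp{p-1}{\E}$, I apply~\eqref{Pistar-1} to replace $\uh$ by $\Pistar\uh$ in $\cH(\partial_t\Pistar\uh,\uh)_{0,\E}$ and integrate by parts in time, producing $\tfrac{\cH}{2}(\Norm{\Pistar\uh^{(n)}(\cdot,\tn)}_{0,\Ex}^2-\Norm{\Pistar\uh^{(n)}(\cdot,\tnmo)}_{0,\Ex}^2)$; (ii)~in the upwind terms of $\bhE$ I use the identity $\Pistar\uh^{(n)}(\cdot,\tnmo)=\uh^{(n)}(\cdot,\tnmo)$, a direct consequence of~\eqref{Pistar-2}; (iii)~I sum over all elements and telescope, so that for every $n=1,\dots,N-1$ the two boundary traces at $t=\tn$ on consecutive slabs $\E_n$ and $\E_{n+1}$ combine with the $-\cH(\Pistar\uh^{(n)}(\cdot,\tn),\Pistar\uh^{(n+1)}(\cdot,\tn))_{0,\Ex}$ contribution from the upwind flux into a squared jump, leaving only the unpaired initial and terminal traces.

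\textbf{Step 3 (conclusion).} Taking $\vh=\uh+w_h$ in Step~1 and using the symmetry of $\ah$,
\[
\bh(\uh,\uh+w_h)=\ah(\uh,\uh)+2\ah(w_h,\uh)+\ah(w_h,w_h).
\]
The global stability bound~\eqref{stability-bounds-1:global} controls the first and third terms from below by $\alpha_*(\Norm{\uh}_{\Ytaun}^2+\Norm{w_h}_{\Ytaun}^2)$, and Step~2 identifies the middle term with $\mathcal J(\uh)$. Since $\mathcal J(\uh)$ together with $\Norm{\uh}_{\Ytaun}^2$ and $\Norm{w_h}_{\Ytaun}^2$ reconstruct exactly $\Norm{\uh}_{\Xhtaun}^2$, one obtains
\[
\bh(\uh,\uh+w_h)\ge \min(\alpha_*,2)\,\Norm{\uh}_{\Xhtaun}^2.
\]
The triangle inequality gives $\Norm{\uh+w_h}_{\Ytaun}^2\le 2(\Norm{\uh}_{\Ytaun}^2+\Norm{w_h}_{\Ytaun}^2)\le 2\Norm{\uh}_{\Xhtaun}^2$, so dividing proves~\eqref{discrete:inf-sup} with $\gammaI:=\min(\alpha_*,2)/\sqrt 2$.

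The main obstacle is Step~2. The projector $\Pistar$ preserves the trace at the \emph{left} endpoint of each time slab but not at the right, so the boundary terms coming from integration by parts pair asymmetrically with the upwind fluxes. Only after all these interfacial quantities are rewritten consistently in terms of $\Pistar\uh^{(n)}$ (via~\eqref{Pistar-2}) and telescoped across consecutive slabs do they condense into the exact sum of squared jumps defining $\Norm{\cdot}_{\Xhtaun}$. This is precisely where the design choice of defining the upwind fluxes through $\Pistar$, rather than through an unavailable trace of $\uh$ itself, is essential.
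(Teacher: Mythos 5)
Your proposal is correct and follows the same architecture as the paper's proof: both test with $\uh$ plus a multiple of $\wh:=\Newtonh(\Pistar\uh)$, both use the identity $\bh(\uh,\vh)-\ah(\uh,\vh)=\ah(\wh,\vh)$ coming from the definition~\eqref{Newton-VEM} of the discrete Newton potential, and both rely on the same telescoping computation (via~\eqref{Pistar-1}--\eqref{Pistar-2}) to turn the time-derivative and upwind terms of $\bh(\uh,\uh)$ into the sum of squared traces and jumps appearing in~\eqref{norm:Xh}.

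The one genuine difference is your treatment of the cross term, and it is a simplification worth recording. The paper takes $\vh=\uh+\delta\wh$, bounds $\ah(\uh,\wh)\ge-\alpha^*\Norm{\uh}_{\Ytaun}\Norm{\wh}_{\Ytaun}$ by Cauchy--Schwarz, and absorbs it via Young's inequality, which forces the two-parameter tuning $0<\varepsilon<2\alpha_*/\alpha^*$, $0<\delta<2\varepsilon\alpha_*/\alpha^*$ and an implicit constant $\beta$. You instead observe that, by symmetry of $\ah$ and the defining property of $\Newtonh$ tested against $\uh$ itself, the cross term is \emph{exactly} the nonnegative jump contribution,
\begin{equation*}
\ah(\uh,\wh)=\ah(\wh,\uh)=\bh(\uh,\uh)-\ah(\uh,\uh)\ge 0,
\end{equation*}
so no absorption is needed, $\delta=1$ works, and one obtains the explicit constant $\gammaI=\min(\alpha_*,2)/\sqrt2$. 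This yields a cleaner and quantitatively sharper argument; the paper's version buys nothing extra here except that it would survive if $\ah$ were nonsymmetric (which it is not, since $\aE$ and $\SE$ are both symmetric). Two small points to make explicit in a final write-up: the degenerate case $\uh+\wh=0$ (your lower bound then forces $\Norm{\uh}_{\Xhtaun}=0$, so~\eqref{discrete:inf-sup} holds trivially), and the fact that $\Pistar(\Pistar\uh)=\Pistar\uh$, which you already invoke to match $\bh(\uh,\cdot)-\ah(\uh,\cdot)$ with the right-hand side of~\eqref{Newton-VEM} for $\phih=\Pistar\uh$.
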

\begin{proof}
For any~$\uh$ in~$\Yh$, define~$\wh := \Newtonh (\Pistar \uh)$ in~$\Yh$.
It suffices to prove that
\begin{equation*}
  \frac{\bh(\uh,\uh + \delta \wh)}{\Norm{\uh + \delta \wh}_{\Ytaun}}\ge
\gammaI\Norm{\uh}_{\Xhtaun}
\end{equation*}
for a suitable real parameter~$\delta>0$, which will be fixed below.

The triangle inequality
and the definition of the norm~$\Norm{\cdot}_{\Xhtaun}$ in~\eqref{norm:Xh} imply
\begin{equation*}
\Norm{\uh + \delta \wh}_{\Ytaun}^2
\le
2\left(\Norm{\uh}_{\Ytaun}^2 \!+\! \delta^2 \Norm{\wh}_{\Ytaun}^2 \right)
\le
2 \max(1,\delta^2) \Norm{\uh}_{\Xhtaun}^2,
\end{equation*}
whence we deduce
\begin{equation} \label{step1:inf-sup}
\Norm{\uh + \delta \wh}_{\Ytaun}
\le \sqrt 2 \max(1,\delta) \Norm{\uh}_{\Xhtaun}.
\end{equation}
Next, recalling~\eqref{bh} and~\eqref{eq:upwind}, we write
\begin{equation}\label{eq:buu}
\begin{split}
\bh(\uh,\uh) =
&\sum_{\E \in \taun} 
    \left( \cH(\dpt \Pistar \uh, \uh)_{0,\E} + \ahE(\uh, \uh)\right)
    +\cH\Norm{\Pistar \uh^{(1)}(\cdot,0)}_{0,\Omega}^2 \\
& +\cH\sum_{n=2}^N\left(\Pistar \uh^{(n)}(\cdot,t_{n-1}) - \Pistar\uh^{(n-1)}(\cdot,t_{n-1}),\uh^{(n)}(\cdot,t_{n-1})\right)_{0,\Omega}.
\end{split}
\end{equation}
For $\E=\Ex\times\In$, we have
\begin{equation*}
(\dpt \Pistar \uh, \uh)_{0,\E}
\stackrel{\eqref{Pistar-1}}{=} 
(\dpt \Pistar \uh, \Pistar\uh)_{0,\E}
=
\frac12\left(\Norm{\Pistar \uh^{(n)}(\cdot,t_n)}_{0,\Ex}^2
-\Norm{\Pistar \uh^{(n)}(\cdot,t_{n-1})}_{0,\Ex}^2
\right).
\end{equation*}
By~\eqref{Pistar-2}, we have $\uh^{(n)}(\cdot,\tnmo)=\Pistar \uh^{(n)}(\cdot,\tnmo)$.
Simple calculations give
\begin{equation*}
\begin{split}
&\sum_{\E \in \taun}
(\dpt \Pistar \uh, \uh)_{0,\E} 
+\Norm{\Pistar  \uh^{(1)}(\cdot,0)}_{0,\Omega}^2
    \!\!\!+\sum_{n=2}^N\left( \Pistar  \uh^{(n)}(\cdot,t_{n-1}) - \Pistar \uh^{(n-1)}(\cdot,t_{n-1}),\uh^{(n)}(\cdot,t_{n-1})\right)_{0,\Omega}\\
&= 
\sum_{n=1}^N\left(\frac12\Norm{\Pistar \uh^{(n)}(\cdot,t_n)}_{0,\Omega}^2
-\frac12\Norm{\Pistar \uh^{(n)}(\cdot,t_{n-1})}_{0,\Omega}^2
\right)+\Norm{\Pistar\uh^{(1)}(\cdot,0)}_{0,\Omega}^2\\
&\qquad+\sum_{n=2}^N\left(
\Norm{\Pistar \uh^{(n)}(\cdot,t_{n-1})}_{0,\Omega}^2
-\left(\Pistar \uh^{(n-1)}(\cdot,t_{n-1}),
\Pistar \uh^{(n)}(\cdot,t_{n-1})\right)_{0,\Omega}
\right)\\
&=\frac12\Norm{\Pistar\uh^{(1)}(\cdot,0)}_{0,\Omega}^2
+\sum_{n=2}^N\frac12\Norm{\Pistar \uh^{(n)}(\cdot,t_{n-1})-
\Pistar \uh^{(n-1)}(\cdot,t_{n-1})
}_{0,\Omega}^2
+\frac12\Norm{\Pistar\uh^{(N)}(\cdot,T)}_{0,\Omega}^2. 
\end{split}
\end{equation*}
Therefore, from~\eqref{eq:buu} and~\eqref{stability-bounds-1:global}, we get
\begin{equation} \label{step2:inf-sup}
\begin{split}
\bh(\uh,\uh) \ge &\alpha_* \Norm{\uh}_{\Ytaun}^2
+ \frac{\cH}2 \Big(
\Norm{\Pistar \uh^{(1)}(\cdot,0)}_{0,\Omega}^2\\
&+\sum_{n=2}^N\Norm{\Pistar  \uh^{(n)}(\cdot,\tnmo)
- \Pistar \uh^{(n-1)}(\cdot,t_{n-1})}^{2}_{0,\Omega}
+\Norm{\Pistar\uh^{(N)}(\cdot,T)}_{0,\Omega}^2\Big).
\end{split}
\end{equation}
Moreover, the definition of $\bh(\cdot,\cdot)$ in~\eqref{eq:upwind} and~\eqref{bh}, and
the definition of the VE Newton potential in~\eqref{Newton-VEM} imply
\begin{equation} \label{bh_delta_w}
\bh(\uh,\delta\wh)
= \delta \left( \ah(\wh,\wh) + \ah(\uh,\wh)    \right).
\end{equation}
Since~\eqref{stability-bounds-1:global} gives
$
\ah(\wh,\wh) \ge \alpha_* \Norm{\wh}_{\Ytaun}^2$, then
Young's inequality entails,
for all positive~$\varepsilon$,
\begin{equation*}
\begin{split}
\ah(\uh,\wh)
& \ge - \left( \ah(\uh,\uh) \right)^{\frac12} 
            \left( \ah(\wh,\wh) \right)^{\frac12}
  \ge - \alpha^* \Norm{\uh}_{\Ytaun} \Norm{\wh}_{\Ytaun}\\
& \ge -\frac{\alpha^*}{2\varepsilon}  \Norm{\uh}_{\Ytaun}^2
        -\frac{\alpha^* \varepsilon}{2} \Norm{\wh}_{\Ytaun}^2.
\end{split}
\end{equation*}
Inserting the two above inequalities into \eqref{bh_delta_w} yields
\begin{equation} \label{step3:inf-sup}
\bh(\uh,\delta\wh)
\ge \delta \left( \alpha_* -\frac{\alpha^* \varepsilon}{2} \right) \Norm{\wh}_{\Ytaun}^2
    -\frac{\alpha^* \delta}{2\varepsilon} \Norm{\uh}_{\Ytaun}^2.
\end{equation}
As a final step, we sum~\eqref{step2:inf-sup} and~\eqref{step3:inf-sup}:
\begin{equation*}
\begin{split}
\bh(\uh,\uh+\delta\wh)
\ge &\left( \alpha_* - \frac{\alpha^*\delta}{2\varepsilon} \right) \Norm{\uh}_{\Ytaun}^2
+ \delta \left( \alpha_* - \frac{\alpha^* \varepsilon}{2} \right) \Norm{\wh}_{\Ytaun}^2+ \frac{\cH}2 \Big(
\Norm{\Pistar  \uh^{(1)}(\cdot,0)}_{0,\Omega}^2\\
&+\sum_{n=2}^N\Norm{\Pistar  \uh^{(n)}(\cdot,\tnmo)
-\Pistar \uh^{(n-1)}(\cdot,t_{n-1})}_{0,\Omega}^2
+\Norm{\Pistar\uh^{(N)}(\cdot,T)}_{0,\Omega}^2\Big).
\end{split}
\end{equation*}
Taking~$0<\varepsilon<(2\alpha_*)/\alpha^*$
and~$0<\delta<(2\varepsilon \alpha_*)/\alpha^*$,
defining
\begin{equation*}
\beta:= \min \left( \alpha_* - \frac{\alpha^*\delta}{2\varepsilon},
           \delta \left( \alpha_* - \frac{\alpha^* \varepsilon}{2} \right) \right) >0,
\end{equation*}
and recalling~\eqref{norm:Xh} and~\eqref{step1:inf-sup},
we can write
\begin{equation*}
\begin{split}
\bh(\uh,
& \uh + \delta \wh)
\ge \beta \left( \Norm{\uh}_{\Ytaun}^2 
                + \Norm{\wh}_{\Ytaun}^2 \right)
                +\frac{\cH}2 \Big(
\Norm{\Pistar \uh^{(1)}(\cdot,0)}_{0,\Omega}^2\\
&\qquad +\sum_{n=2}^N\Norm{\Pistar \uh^{(n)}(\cdot,\tnmo)
-\Pistar \uh^{(n-1)}(\cdot,t_{n-1})}_{0,\Omega}^2
+\Norm{\Pistar\uh^{(N)}(\cdot,T)}_{0,\Omega}^2\Big)\\
& \ge \min \left(1,\beta \right)
\Norm{\uh}_{\Xhtaun}^2
\ge \frac{\min \left(1,\beta \right)}{\sqrt2 \max(1,\delta)} \Norm{\uh}_{\Xhtaun} \Norm{\uh+\delta\wh}_{\Ytaun}.
\end{split}
\end{equation*}
The assertion follows with
$\gammaI:= \min \left(1,\beta \right) / (\sqrt2 \max(1,\delta)). $
\end{proof}

We are in a position to prove the well posedness of the method in~\eqref{VEM}.

\begin{theorem} \label{theorem:well-posedness}
There exists a unique solution $\uh$ to the method in~\eqref{VEM} with the following continuous dependence on the data:
\begin{equation*}
\Norm{\uh}_{\Xhtaun}
\le \gammaI^{-1} \CP \nu^{-1} \Norm{f}_{0,\QT},
\end{equation*}
where~$\gammaI$ is the discrete inf-sup constant in~\eqref{discrete:inf-sup},
$\CP$ is the Poincar\'e-type inequality constant in~\eqref{global-Poincare}
and~$\nu$ is the thermal conductivity.
\end{theorem}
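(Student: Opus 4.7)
The plan is to derive both existence/uniqueness and the continuous dependence estimate directly from the discrete inf-sup condition in Proposition~\ref{proposition:discrete:inf-sup}, combined with the $L^2$-stability of $\Pizpmo$ and the global Poincar\'e-type inequality in Proposition~\ref{prop:global-Poincare}. Since $\Yh$ is finite dimensional and trial and test spaces coincide, the inf-sup bound~\eqref{discrete:inf-sup} already implies that the linear operator associated with $\bh(\cdot,\cdot)$ is injective on $\Yh$; by the rank-nullity theorem it is bijective, which simultaneously yields existence and uniqueness of $\uh$ solving~\eqref{VEM}.

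For the stability bound, I start from Proposition~\ref{proposition:discrete:inf-sup} applied to the solution $\uh$:
\begin{equation*}
\gammaI \Norm{\uh}_{\Xhtaun}
\le \sup_{0\ne \vh \in \Yh} \frac{\bh(\uh,\vh)}{\Norm{\vh}_{\Ytaun}}
= \sup_{0\ne \vh \in \Yh} \frac{(f,\Pizpmo \vh)_{0,\QT}}{\Norm{\vh}_{\Ytaun}},
\end{equation*}
where the last equality uses that $\uh$ satisfies~\eqref{VEM}. Next, I bound the numerator by Cauchy--Schwarz and by the $L^2$-stability of the piecewise polynomial projector $\Pizpmo$, which gives $(f,\Pizpmo \vh)_{0,\QT} \le \Norm{f}_{0,\QT}\Norm{\Pizpmo \vh}_{0,\QT} \le \Norm{f}_{0,\QT}\Norm{\vh}_{0,\QT}$. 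Applying the Poincar\'e-type inequality~\eqref{global-Poincare} then turns $\Norm{\vh}_{0,\QT}$ into $\CP \Norm{\vh}_{\Ytaun}$, which cancels with the denominator in the supremum.

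Putting these ingredients together produces an estimate of the form $\gammaI \Norm{\uh}_{\Xhtaun} \le \CP\,c(\nu)\,\Norm{f}_{0,\QT}$, where the factor $c(\nu)$ is determined by tracking the $\nu$-dependence coming from the scaling in the definition of $\Norm{\cdot}_{\Ytaun}$; matching the statement requires identifying this factor as $\nu^{-1}$. Most of the work is delegated to the earlier propositions, so the proof is essentially a short chain of applications and does not contain a genuinely hard step: the only point to be careful about is book-keeping of the $\nu$-factor through the Poincar\'e bound and the definition of the broken $Y$-norm, to ensure the final stability constant takes the stated form $\gammaI^{-1}\CP\nu^{-1}$.
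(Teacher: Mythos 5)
Your proposal is correct and follows essentially the same route as the paper: uniqueness from the inf-sup condition~\eqref{discrete:inf-sup}, existence by finite dimensionality (rank--nullity), and the stability bound via Cauchy--Schwarz, the $L^2$-stability of $\Pizpmo$, and the Poincar\'e-type inequality~\eqref{global-Poincare}. The only remark is that your book-keeping of the $\nu^{-1}$ factor is left slightly implicit, but this is a minor normalization issue between the $\nu$-weighted $\Norm{\cdot}_{\Ytaun}$ norm and the constant $\CP$, and the paper's own proof treats it with the same brevity.
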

\begin{proof}
The discrete inf-sup condition~\eqref{discrete:inf-sup} implies uniqueness of the solution.
The existence follows from the uniqueness,
owing to the finite dimensionality of~$\Yh$.
As for the stability bound,
we apply again the inf-sup condition~\eqref{discrete:inf-sup}
and recall the definition~\eqref{VEM} of the method:
\begin{equation*}
\Norm{\uh}_{\Xhtaun}
\le \frac{1}{\gammaI} \sup_{0\ne\vh\in\Yh}
\frac{\bh(\uh,\vh)}{\Norm{\vh}_{\Ytaun}}
=\frac{1}{\gammaI} \sup_{0\ne\vh\in\Yh}
\frac{(f, \Pizpmo \vh)_{0,\QT}}{\Norm{\vh}_{\Ytaun}}.
\end{equation*}
The Cauchy-Schwarz inequality, the $L^2$ stability of $\Pizpmo$,
and the global Poincar\'e-type inequality~\eqref{global-Poincare} give the assertion.
\end{proof}

\section{Convergence analysis} \label{section:convergence-analysis}
In this section, we analyze the convergence of the method in~\eqref{VEM}.
We start by introducing further technical tools
in Subsection~\ref{subsection:nc-preliminary},
which are typical of the nonconforming framework.
Then, in Subsection~\ref{subsection:Strang,subsection:convergence}, we develop an \textit{a priori} error analysis in two steps:
first, we prove a convergence result \emph{\`a la} Strang; next, we derive optimal convergence rates,
by using interpolation and polynomial approximation results,
assuming sufficient regularity on the solution.

\subsection{Technical results} \label{subsection:nc-preliminary}
Introduce the bilinear form
$
\NCh: L^2 \big(0,T;H^{\frac32+\varepsilon}(\Omega) \big)
\times \Yh \to \Rbb
$
given by
\begin{equation} \label{nc-bf}
\NCh(u,\vh):= \nu \sum_{n=1}^N \int_{\In}  
             \sum_{\Fx \in \Fcalxh} \int_{\Fx} \!\!\! \nablax u \cdot \jump{\vh}_{\F} \dS\ \dt.
\end{equation}
This bilinear form encodes information on the nonconformity of the
space~$\Yh$ across time-like facets.

On $\E = \In \times \Ex$, define the local bilinear form 
\begin{equation*}
\bE(w, v) := \int_{\In} \int_{\Ex} \left(\cH \dpt{} w v + \nu \nablax w \cdot \nablax v\right) \dx \dt .
\end{equation*}
\begin{lemma} \label{lemma:properties:nc-bf}
Assume that the solution $u$ to the continuous problem~$\eqref{continuous-weak}$
belongs to $L^2 (0,T;H^{\frac32+\varepsilon}(\Omega))$.
Then, for all~$\vh$ in~$\Yh$,
\begin{equation} \label{identity:nc}
\sum_{\E \in \taun} \bE(u,\vh) = (f,\vh)_{0,\QT} + \NCh(u,\vh).
\end{equation}
\end{lemma}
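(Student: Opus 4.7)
The plan is to perform element-wise integration by parts in space only, then recognize the bulk contribution as $(f, v_h)_{0, Q_T}$ via the strong form of the PDE, and finally reorganize the resulting boundary integrals as a sum over time-like facets using the definition of $\jump{\cdot}_{\F}$.

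First, I would apply spatial integration by parts on each element $\E = \Ex \times \In$ to write
\begin{equation*}
\sum_{\E \in \taun} \bE(u, \vh)
= \sum_{\E \in \taun} \int_{\In} \int_{\Ex} \bigl( \cH \dpt u - \nu \Deltax u \bigr) \vh \dx\ dt
+ \sum_{\E \in \taun} \int_{\In} \int_{\partial \Ex} \nu \nablax u \cdot \nbf_{\Ex} \vh \dS\ dt.
\end{equation*}
The regularity assumption $u \in L^2(0, T; H^{\frac32+\varepsilon}(\Omega))$ guarantees that the normal trace of $\nablax u$ on $\partial \Ex$ is well defined for a.e.\ $t$, so that the integration by parts is justified. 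No time integration by parts is performed, so no space-like facet contributions arise.

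Next, I would invoke the strong form of the PDE, $\cH \dpt u - \nu \Deltax u = f$ in $\QT$, which holds in the $L^2$ sense under the given regularity. The bulk terms then collapse to
\begin{equation*}
\sum_{\E \in \taun} \int_{\In} \int_{\Ex} \bigl( \cH \dpt u - \nu \Deltax u \bigr) \vh \dx\ dt = (f, \vh)_{0, \QT}.
\end{equation*}

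Finally, I would reorganize $\sum_{\E} \int_{\In} \int_{\partial \Ex} \nu \nablax u \cdot \nbf_{\Ex} \vh \dS\ dt$ as a sum over time-like facets $\F = \Fx \times \In$. For each internal $\Fx$ shared by $\E_1$ and $\E_2$, continuity of the normal component of $\nablax u$ across $\Fx$ (again from the $H^{\frac32+\varepsilon}$-in-space regularity) lets the two contributions combine into $\int_{\In}\int_{\Fx} \nu \nablax u \cdot (\vh{}_{|\E_1} \nbf_{\E_1}^\F + \vh{}_{|\E_2} \nbf_{\E_2}^\F) \dS\ dt = \int_{\In}\int_{\Fx} \nu \nablax u \cdot \jump{\vh}_{\F} \dS\ dt$ by~\eqref{jump-vector}. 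The homogeneous Dirichlet condition on $\partial\Omega$ plays no role here: for boundary time-like facets the single-element contribution already matches the boundary branch of~\eqref{jump-vector}. Summing over all time-like facets in every time slab yields exactly $\NCh(u, \vh)$, which gives~\eqref{identity:nc}.

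The main step requiring care is the justification of the elementwise integration by parts and the continuity of $\nablax u$ across interior time-like facets; both reduce to the $L^2(0, T; H^{\frac32+\varepsilon}(\Omega))$ hypothesis, so the argument is essentially a distributional rewriting of the PDE adapted to the polytopic spatial mesh, with no contribution from space-like facets because the time derivative is never integrated by parts.
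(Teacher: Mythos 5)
Your proposal is correct and follows essentially the same route as the paper: element-wise integration by parts in space only, identification of the bulk term with $(f,\vh)_{0,\QT}$ via the strong form, and regrouping of the element-boundary integrals facet by facet into $\NCh(u,\vh)$ using the single-valuedness of $\nablax u\cdot\nbfFx$ guaranteed by the $L^2(0,T;H^{\frac32+\varepsilon}(\Omega))$ regularity. Your version simply spells out the justifications (traces, interior continuity, boundary facets) that the paper leaves implicit.
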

\begin{proof}
Integrating by parts in space and recalling the definition of~$\NCh$ in~\eqref{nc-bf}, we can write
\begin{equation*}
\begin{split}
\sum_{\E \in \taun} \bE(u,\vh)
& = \sum_{\E \in \taun} \int_{\In} 
    \Big( \int_{\Ex} (\cH \dpt u - \nu \Deltax u)\vh \dx 
    + \nu \sum_{\Fx \in \FcalEx} \int_{\Fx} \vh (\nbfFx \cdot \nablax u)  \dS \Big)\dt\\
& = (f,\vh)_{0,\QT} + \NCh(u,\vh),
\end{split}
\end{equation*}
which proves~\eqref{identity:nc}.
\end{proof}
We introduce another preliminary result,
which characterizes the polynomial inconsistency of the method in~\eqref{VEM}.
To this aim, we define the bilinear form
$\JcalE: H^1(\taun) \times \Yh \to \Rbb$
given on each~$\E = \Ex \times \In$ in~$\taun$ as follows:
for all~$w$ in~$H^1(\taun)$, $\vh$ in~$\Yh$,
\begin{equation} \label{time-inconsistency-operator}
\begin{split}
&
\JcalE(w,\vh) :=
\begin{cases}
\cH\left(\Pistar  w^{(1)}(\cdot,0), \vh^{(1)}(\cdot,0)\right)_{0,\Ex} 
& \text{if $n=1$};\\[0.2cm]
\cH\left(\Pistar  w^{(n)}(\cdot,\tnmo) - \Pistar w^{(n-1)}(\cdot,\tnmo), \vh^{(n)}(\cdot,\tnmo)\right)_{0,\Ex}  
& \text{if $2\le n\le N$}.\\
\end{cases}
\end{split}
\end{equation}
This bilinear form encodes the polynomial inconsistency of the method at space-like facets, as stated in the following lemma.

\begin{lemma} \label{lemma:consistency}
The local bilinear forms~$\bhE(\cdot,\cdot)$ satisfy
\begin{equation} \label{polynomial:consistency}
\begin{split}
& \bhE(\qp,\vh) = 
\bE(\qp,\vh)+ \JcalE(\qp, \vh)
\qquad
\forall \qp \in \Scalptaun,\; \forall \vh \in \VhE,
\; \forall \E \in \taun.
\end{split}
\end{equation}
\end{lemma}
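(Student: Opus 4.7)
The plan is to exploit the polynomial consistency of the projectors $\PiN$ and $\Pistar$ and the null property of the stabilization on polynomials, then verify term by term that the definition of $\bhE$ in~\eqref{eq:upwind} reduces exactly to $\bE(\qp,\vh) + \JcalE(\qp,\vh)$ when the first argument is a piecewise polynomial of degree at most $p$.

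The first step is to observe that $\qp{}_{|\E}\in\Pbb_p(\E)\subset \VhE$, so the uniqueness parts of Lemmas~\ref{lemma:PiN} and~\ref{lemma:Pistar} immediately give
\begin{equation*}
\PiN \qp = \qp \quad\text{and}\quad \Pistar \qp = \qp \quad\text{on } \E.
\end{equation*}
From the first identity and the definition~\eqref{ahtildeE}, the stabilization contribution $\nu\SE((I-\PiN)\qp,(I-\PiN)\vh)$ vanishes, so $\ahE(\qp,\vh)=\aE(\PiN\qp,\PiN\vh)=\aE(\qp,\PiN\vh)$. Then, testing~\eqref{PiN-1} with $q_p^\E=\qp$ yields $(\nablax\qp,\nablax\PiN\vh)_{0,\E}=(\nablax\qp,\nablax\vh)_{0,\E}$, and hence $\ahE(\qp,\vh)=\aE(\qp,\vh)$.

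Next, I handle the time derivative term. Since $\Pistar\qp=\qp$, we have
\begin{equation*}
\cH(\dpt\Pistar\qp,\vh)_{0,\E} = \cH(\dpt\qp,\vh)_{0,\E},
\end{equation*}
which, combined with the identity for $\ahE(\qp,\vh)$, gives exactly $\bE(\qp,\vh)$ (cf.\ the definition of $\bE$ just above Lemma~\ref{lemma:properties:nc-bf}).

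Finally, the upwind terms in~\eqref{eq:upwind} are literally the terms defining $\JcalE(\qp,\vh)$ in~\eqref{time-inconsistency-operator}, both for $n=1$ and for $2\le n\le N$. Summing the three contributions yields~\eqref{polynomial:consistency}. There is no real obstacle here: the main point is simply the polynomial consistency of $\PiN$ and $\Pistar$ together with the orthogonality~\eqref{PiN-1}; if any subtlety appears, it is checking that $\qp{}_{|\E}$ is indeed in $\VhE$ so that the projectors can be applied, but this is immediate from the inclusion $\Pbb_p(\E)\subset\VhE$ noted right after~\eqref{local-VE-space}.
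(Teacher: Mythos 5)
Your proposal is correct and follows essentially the same route as the paper: polynomial invariance of $\PiN$ and $\Pistar$ (so the stabilization term vanishes and $\dpt\Pistar\qp=\dpt\qp$), the orthogonality~\eqref{PiN-1} to replace $\PiN\vh$ by $\vh$ in $\aE(\qp,\cdot)$, and the observation that the upwind terms coincide with $\JcalE(\qp,\vh)$. The paper's proof is just a condensed chain of equalities encoding exactly these three facts.
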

\begin{proof}
Thanks to the definition of the bilinear form~$\aE(\cdot,\cdot)$,
the orthogonality properties of the projector~$\PiN$,
and the fact that the projectors~$\Pistar$ and~$\PiN$ preserve polynomials of degree~$p$,
we have
\begin{equation*}
\begin{split}
\bhE(\qp,\vh)
& = \cH (\dpt \Pistar \qp, \vh)_{0,\E} 
    + \aE(\PiN\qp, \PiN \vh) 
    + \nu \SE( (I \!-\! \PiN)\qp, (I \!-\! \PiN) \vh)
    + \JcalE(\Pistar \qp, \vh)\\
& = \cH (\dpt \Pistar\qp, \vh)_{0,\E}
+\aE(\qp,\PiN\vh)
+\JcalE(\qp, \vh)\\
&=\cH (\dpt \qp, \vh)_{0,\E}
+\aE(\qp,\vh) +\JcalE(\qp, \vh)
= \bE(\qp,\vh) +\JcalE(\qp, \vh).
\end{split}
\end{equation*}
This completes the proof.
\end{proof}

\subsection{A Strang-type result} \label{subsection:Strang}
We prove an \emph{a priori} estimate for the method in~\eqref{VEM}.

\begin{theorem} \label{theorem:Strang}
Let~$u$ and~$\uh$ be the solutions to~\eqref{continuous-weak} and~\eqref{VEM},
$u$ belong to~$X \cap L^2(0,T,H^{\frac32+\varepsilon}(\Omega))$
for some~$\varepsilon>0$,
$\uI$ in~$\Yh$ be the $\DoF$ interpolant of~$u$ in~$\Yh$,
and~$\gammaI$ be the discrete inf-sup constant appearing in~\eqref{discrete:inf-sup}.
Then, we have
\begin{equation} \label{Strang-estimate}
\begin{split}
\Norm{u-\uh}_{\Xhtaun}
& \le \Norm{u-\uI}_{\Ytaun}
+ \gammaI^{-1} \sup_{0\ne\vh\in\Yh} \Bigg[
   \frac{\vert (f- \Pizpmo f, \vh )_{0,\QT} \vert}{\Norm{\vh}_{\Ytaun}}
   + \frac{\vert \NCh(u,\vh) \vert}{\Norm{\vh}_{\Ytaun}} \\
& \quad + \inf_{\qp \in \Scalptaun}
    \frac{\sum_{\E\in\taun}\left( \bhE(u-\qp,\vh) - \bE(u - \qp,\vh) +\JcalE(\qp, \vh) \right)}{\Norm{\vh}_{\Ytaun}}  \Bigg].
\end{split}
\end{equation}
\end{theorem}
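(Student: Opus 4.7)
I would combine a triangle inequality with the discrete inf-sup condition of Proposition~\ref{proposition:discrete:inf-sup}. The key algebraic remark is that the $\DoFs$ in~\eqref{bulk-dofs}--\eqref{bottom-dofs} that define $\uI$ are precisely the data needed to compute $\PiN$ and $\Pistar$ via Lemmas~\ref{lemma:PiN} and~\ref{lemma:Pistar}; hence $\PiN(u-\uI) = 0$ and $\Pistar(u-\uI) = 0$ elementwise. In particular, the Newton potential, the initial/final traces and all $\Pistar$-jumps entering the definition~\eqref{norm:Xh} of $\Norm{\cdot}_{\Xhtaun}$ vanish on $u-\uI$, giving $\Norm{u-\uI}_{\Xhtaun} = \Norm{u-\uI}_{\Ytaun}$. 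The triangle inequality $\Norm{u-\uh}_{\Xhtaun} \le \Norm{u-\uI}_{\Xhtaun} + \Norm{\uI-\uh}_{\Xhtaun}$ then produces the first summand on the right-hand side of~\eqref{Strang-estimate}.

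Since $\uI-\uh \in \Yh$, I would use~\eqref{discrete:inf-sup} together with the discrete equation $\bh(\uh,\vh) = (f,\Pizpmo\vh)_{0,\QT}$ to write
\[
\Norm{\uI-\uh}_{\Xhtaun} \le \gammaI^{-1} \sup_{0\ne\vh\in\Yh} \frac{\bh(\uI,\vh) - (f,\Pizpmo\vh)_{0,\QT}}{\Norm{\vh}_{\Ytaun}}.
\]
For an arbitrary $\qp\in\Scalptaun$, the decomposition
\[
\bh(\uI,\vh) = \sum_{\E\in\taun}\bhE(\uI-u,\vh) + \sum_{\E\in\taun}\bhE(u-\qp,\vh) + \sum_{\E\in\taun}\bhE(\qp,\vh)
\]
combined with Lemma~\ref{lemma:consistency}, which turns $\sum_\E\bhE(\qp,\vh)$ into $\sum_\E\bE(\qp,\vh) + \sum_\E\JcalE(\qp,\vh)$, with Lemma~\ref{lemma:properties:nc-bf}, used after adding and subtracting $\sum_\E\bE(u,\vh)$ to get $\sum_\E\bE(u,\vh)=(f,\vh)_{0,\QT}+\NCh(u,\vh)$, and with the $L^2$-orthogonality of $\Pizpmo$, which rewrites $(f,\vh)_{0,\QT}-(f,\Pizpmo\vh)_{0,\QT}$ as $(f-\Pizpmo f,\vh)_{0,\QT}$, yields
\begin{align*}
\bh(\uI-\uh,\vh) &= \sum_{\E\in\taun}\bhE(\uI-u,\vh) + \sum_{\E\in\taun}\big(\bhE(u-\qp,\vh) - \bE(u-\qp,\vh) + \JcalE(\qp,\vh)\big) \\
&\quad + \NCh(u,\vh) + (f-\Pizpmo f,\vh)_{0,\QT}.
\end{align*}

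Three of the four summands already match, up to the infimum over $\qp$, the three terms inside the supremum of~\eqref{Strang-estimate}; dividing by $\Norm{\vh}_{\Ytaun}$ and taking suprema is immediate. The fourth piece, $\sum_\E\bhE(\uI-u,\vh)$, is where the $\DoF$-orthogonality pays off once more: since $\Pistar(\uI-u)=\PiN(\uI-u)=0$, the time-derivative and upwind contributions in~\eqref{eq:upwind} vanish, leaving $\bhE(\uI-u,\vh) = \nu\SE(\uI-u,(I-\PiN)\vh)$. Cauchy--Schwarz on $\SE$, the continuity bound~\eqref{property-2:stab} on the first factor, and~\eqref{stab-prop-0} together with Pythagoras applied to $(I-\PiN)\vh\in\VhE\cap\ker(\PiN)$ then give an estimate of the form $|\sum_\E\bhE(\uI-u,\vh)| \le C\Norm{u-\uI}_{\Ytaun}\Norm{\vh}_{\Ytaun}$, which is absorbed into the contribution of the first step. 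The main obstacle is the careful bookkeeping of the expansion of $\bh(\uI-\uh,\vh)$ and, above all, controlling the leftover stabilization term $\sum_\E\bhE(\uI-u,\vh)$ solely by $\Norm{u-\uI}_{\Ytaun}$ rather than by a stronger $h$-weighted norm; the polynomial-preserving structure of $\uI$ together with the $\PiN$-orthogonality is what makes this possible.
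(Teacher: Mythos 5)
Your overall route is the paper's: triangle inequality, the observation that $\Pistar(u-\uI)=0$ elementwise so that $\Norm{u-\uI}_{\Xhtaun}=\Norm{u-\uI}_{\Ytaun}$, the discrete inf-sup condition for $\uI-\uh$, and then the chain add/subtract $\qp$, Lemma~\ref{lemma:consistency}, add/subtract $u$, Lemma~\ref{lemma:properties:nc-bf}, and the orthogonality of $\Pizpmo$. That part is correct and matches the paper step by step.

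There is, however, a genuine gap in how you dispose of the leftover term $\sum_{\E}\bhE(\uI-u,\vh)$. The paper never sees this term: since every ingredient of $\bhE(\cdot,\vh)$ in its first argument ($\Pistar$, $\PiN$, and by assumption $\SE$) is computable via the $\DoFs$, and $\uI$ has by definition the same $\DoFs$ as $u$, one has $\bh(\uI,\vh)=\bh(u,\vh)$ identically, i.e.\ $\sum_{\E}\bhE(\uI-u,\vh)=0$. You correctly reduce this term to $\nu\,\SE(\uI-u,(I-\PiN)\vh)$, but then claim a bound $\lesssim \Norm{u-\uI}_{\Ytaun}\Norm{\vh}_{\Ytaun}$ that your cited tools do not deliver: property~\eqref{property-2:stab} controls $\SE(\uI-u,\uI-u)$ only by the $h$-weighted quantity $\hEx^{-2}\Norm{\uI-u}_{0,\E}^2+\Norm{\nablax(\uI-u)}_{0,\E}^2+\hEx^{-2}\htime^2\Norm{\dpt(\uI-u)}_{0,\E}^2$, not by $\SemiNorm{\uI-u}_{\YE}^2$, and the norm equivalence~\eqref{stab-prop-0} is only available on $\VhE\cap\ker(\PiN)$, which does not contain $\uI-u$ (it is not a virtual element function). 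The correct closing of this step is not an estimate but the identity $\SE(\uI-u,\cdot)=0$, which again follows from $\DoF$-computability of $\SE$ together with $\DoF_i(\uI-u)=0$ for all $i$. Note also that even if your asserted bound were available with some constant $C$, it would surface as an extra $C\gammaI^{-1}\Norm{u-\uI}_{\Ytaun}$ on the right-hand side, so you would prove only a weakened version of~\eqref{Strang-estimate} with a constant larger than $1$ in front of the interpolation error, not the stated inequality.
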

\begin{proof}
By the triangle inequality, we have
\begin{equation*}
\Norm{u-\uh}_{\Xhtaun}
\le 
\Norm{u-\uI}_{\Xhtaun} + \Norm{\uI-\uh}_{\Xhtaun}
=: T_1 + T_2.
\end{equation*}
Since~$\Pistar$ is computable from the $\DoFs$,
we have that $\Pistar(u-\uI)=0$ in each element.
Taking into account~\eqref{norm:Xh}, this yields
\begin{equation*}
\begin{split}
 T_1^2
 &= 
   \Norm{u-\uI}_{\Ytaun}^2
   +\Norm{\Newtonh \left(\Pistar(u-\uI)\right)}_{\Ytaun}^2
   +\frac{\cH}{2} \Big(\Norm{\Pistar (u-\uI)(\cdot,0)}_{0,\Omega}^2\\
&\qquad +\sum_{n=2}^{N}
\Norm{\Pistar (u-\uI^{(n)})(\cdot,\tnmo)
-\Pistar(u-\uI^{(n-1)})(\cdot,\tnmo)}_{0,\Omega}^2
+\Norm{\Pistar(u-\uI)(\cdot,T)}_{0,\Omega}^2\Big)\\
 &= \Norm{u-\uI}_{\Ytaun}^2.
\end{split}
\end{equation*}
The rest of this proof is devoted to estimate the term~$T_2$.
The definition of~$\uI$ implies
\begin{equation*}
\bh(\uI,\vh) = \bh(u,\vh) \qquad \forall \vh \in \Yh.
\end{equation*}
Using this property and the discrete inf-sup condition~\eqref{discrete:inf-sup},
we get
\begin{equation*}
\Norm{\uI-\uh}_{\Xhtaun}
\le \gammaI^{-1} \sup_{0\ne \vh \in \Yh} \frac{\bh(\uI-\uh,\vh)}{\Norm{\vh}_{\Ytaun}}
= \gammaI^{-1} 
\sup_{0\ne \vh \in \Yh} \frac{\bh(u-\uh,\vh)}{\Norm{\vh}_{\Ytaun}}.
\end{equation*}
We recall~\eqref{VEM},
add and subtract any~$\qp$ in~$\Scalptaun$,
use the inconsistency property~\eqref{polynomial:consistency},
add and subtract~$u$, recall the property of the nonconformity bilinear form~$\NCh(\cdot,\cdot)$ in~\eqref{identity:nc},
and deduce
\begin{equation*}
\begin{split}
& \bh(u-\uh,\vh)
 = \sum_{\E \in \taun} \bhE(u,\vh) - (f, \Pizpmo \vh)_{0,\QT}\\
& = \sum_{\E \in \taun} \left( \bhE(u-\qp,\vh) + \bhE(\qp,\vh) \right) - (f, \Pizpmo \vh)_{0,\QT}\\
& = \sum_{\E \in \taun}
\left( \bhE(u-\qp,\vh) + \bE(\qp,\vh) +\JcalE(\qp, \vh) \right)
- (f, \Pizpmo \vh)_{0,\QT}\\
& = \sum_{\E \in \taun} \left( \bhE(u-\qp,\vh) + \bE(\qp-u,\vh)
+\bE(u,\vh) +\JcalE(\qp, \vh) \right) 
- (f, \Pizpmo \vh)_{0,\QT}\\
& = \sum_{\E \in \taun} \left( \bhE(u-\qp,\vh) - \bE(u - \qp,\vh) +\JcalE(\qp, \vh) \right) 
+ (f- \Pizpmo f, \vh)_{0,\QT} + \NCh(u,\vh).
\end{split}
\end{equation*}
The assertion follows from
taking the infimum over all~$\qp$ in~$\Scalptaun$
and then the supremum over all~$\vh$ in~$\Yh$.
\end{proof}

\subsection{\textit{A priori} error estimate} \label{subsection:convergence}
The aim of this section is to prove optimal convergence rates
for the method in~\eqref{VEM}.
So far, we derived all estimates with explicit constants,
so as to track the use of different type of inequalities
(Poincar\'e, trace, inverse estimates, $\dots$).
Furthermore, we kept separated the contributions of~$\hEx$ and~$\htime$.
In this section, we shall not keep this level of detail.
As a matter of notation,
we henceforth write~$a \lesssim b$ meaning that there exists a positive constant~$c$
independent of the meshsize,
such that~$a \leq c b$.
We also write~$a \simeq b$ if~$a \lesssim b$ and~$b \lesssim a$ at once.

We prove error estimates under some regularity assumptions on the exact solution
and focus on the case of isotropic space-time meshes,
i.e., assume that
\begin{equation} \label{isotropic-properties}
\hEx \simeq \htime \simeq \hE
\qquad \forall \E = \Ex \times \In \in \taun.
\end{equation}

\noindent In Theorem~\ref{theorem:Strang},
we proved that the error of the method in~\eqref{VEM}
is bounded by the sum of four terms of different flavour:
\emph{i)} a VE interpolation error;
\emph{ii)} a term involving the discretization of the right-hand side~$f$;
\emph{iii)} a term measuring the spatial nonconformity of the discrete space;
\emph{iv)} a term involving polynomial error estimates,
which appears because of the temporal nonconformity and the polynomial inconsistency of the discrete bilinear form.
Based on that result, we prove the following theorem.

\begin{theorem} \label{theorem:a-priori:error-estimates}
Let 
assumptions (\textbf{G1})-(\textbf{G3}) be valid,
and~$\taun$ be isotropic
in the sense of~\eqref{isotropic-properties}.
Let~$u$, the solution of~\eqref{continuous-weak}, and $f$, the right-hand side
of~\eqref{continuous-weak},
belong to~$H^{\p+1}(\taun)$
and~$H^\p(\taun)$, respectively,
where~$p \geq 1$ denotes the degree of approximation of the method in~\eqref{VEM}.
Let~$\uh$ be the solution to~\eqref{VEM}.
Then, 
\begin{equation} \label{error:estimates}
\Norm{u-\uh}_{\Xhtaun} 
\lesssim \h^\p (\SemiNorm{u}_{\p+1,\taun} + \SemiNorm{f}_{\p,\taun} ).
\end{equation}
\end{theorem}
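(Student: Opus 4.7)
My starting point is the Strang-type estimate~\eqref{Strang-estimate} of Theorem~\ref{theorem:Strang}, which reduces the task to bounding by $\h^\p(\SemiNorm{u}_{\p+1,\taun}+\SemiNorm{f}_{\p,\taun})$ the four contributions
\begin{equation*}
T_{\mathrm{int}}:=\Norm{u-\uI}_{\Ytaun}, \quad
T_f, \quad T_{\NCh}, \quad T_{\mathrm{inc}},
\end{equation*}
corresponding respectively to VE interpolation, data consistency, spatial nonconformity, and polynomial inconsistency. Each will be estimated by a classical VEM/nonconforming ingredient, and the isotropy assumption~\eqref{isotropic-properties} will allow me to lump $\hEx$ and $\htime$ into a single $\h$.

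For $T_{\mathrm{int}}$, I will construct the $\DoF$-interpolant $\uI \in \Yh$ elementwise via the functionals~\eqref{bulk-dofs}--\eqref{bottom-dofs}, and prove $\Norm{u-\uI}_{\Ytaun}\lesssim \h^\p \SemiNorm{u}_{\p+1,\taun}$ by the usual Br\'ezzi--D\'ejak--Bramble--Hilbert argument on the reference element from Remark~\ref{remark:scaling-local-spaces}, comparing $\uI$ with an $L^2$-like polynomial projection and exploiting the inclusion $\Pp{p}{\E}\subset \VhE$. For $T_f$, I will use the $L^2$-orthogonality of $\Pizpmo$ to write $(f-\Pizpmo f,\vh)_{0,\QT}=(f-\Pizpmo f,\vh-\Pizpmo \vh)_{0,\QT}$, apply Cauchy--Schwarz, invoke the standard polynomial approximation $\Norm{f-\Pizpmo f}_{0,\E}\lesssim \h^\p \SemiNorm{f}_{\p,\E}$, and close with $\Norm{\vh-\Pizpmo \vh}_{0,\E}\lesssim \h \Norm{\nablax\vh}_{0,\E}$ plus the global Poincar\'e inequality~\eqref{global-Poincare} to divide by $\Norm{\vh}_{\Ytaun}$.

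For $T_{\NCh}$, I insert the $L^2$-on-facet projection $\PizF$ in the definition~\eqref{nc-bf}: by the nonconformity constraint~\eqref{nonconforming-condition}, replacing $\nablax u$ by $\nablax u-\PizF(\nablax u)$ is legitimate (the spatial components of $\jump{\vh}_\F$ are tested against polynomials of degree $\p$). A standard trace inequality on the spatial facets combined with a polynomial approximation estimate in space gives $\h^\p \SemiNorm{u}_{\p+1,\taun}$ on each facet and, after Cauchy--Schwarz summed elementwise, factors out $\Norm{\vh}_{\Ytaun}$. For $T_{\mathrm{inc}}$, I will pick $\qp\in\Scalptaun$ as the piecewise $L^2$-best polynomial approximant of $u$ of degree $\p$. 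Then $b_h^\E(u-\qp,\vh)-b^\E(u-\qp,\vh)$ is bounded using the continuity of $\ahE$ (Proposition~\ref{proposition:weak-continuity}) together with Cauchy--Schwarz on the time-derivative terms involving $\Pistar$; the stability of $\Pistar$ and $\PiN$ plus isotropy yield $\h^\p$. The jump contribution $\JcalE(\qp,\vh)$ is split as $\JcalE(\qp-u,\vh)+\JcalE(u,\vh)$: since $u\in X$ is continuous in time with vanishing initial trace, at each $\tnmo$ the quantity $\Pistar u^{(n)}(\cdot,\tnmo)$ coincides with the $L^2(\Ex)$-projection of $u(\cdot,\tnmo)$ (by~\eqref{Pistar-2}) while $\Pistar u^{(n-1)}(\cdot,\tnmo)$ differs from $u(\cdot,\tnmo)$ by the approximation error of $\Pistar$ evaluated at the final time, which I will bound via a trace-in-time inequality and standard polynomial approximation of $u$ on $\E$.

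The main obstacle I anticipate is the jump term $\JcalE(u,\vh)$ in $T_{\mathrm{inc}}$: unlike the setting of~\cite{steinbach2015CMAM}, the traces of $\Pistar u^{(n)}$ at the initial time of $I_n$ and of $\Pistar u^{(n-1)}$ at the final time of $I_{n-1}$ are not determined by the same defining conditions, so continuity of $u$ alone is not enough. The fix is a careful use of a trace-in-time inequality combined with the $\Pi^*$ stability and the $H^{\p+1}(\taun)$-regularity of $u$, together with the isotropy~\eqref{isotropic-properties}, to show that this jump behaves like the polynomial approximation error of $u$ near the time interface, hence is of order $\h^\p$ after summation and test against $\vh^{(n)}(\cdot,\tnmo)$ (controlled by $\Norm{\vh}_{\Ytaun}$ via trace-in-time and global Poincar\'e).
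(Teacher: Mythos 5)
Your overall strategy is the paper's: start from the Strang estimate of Theorem~\ref{theorem:Strang} and bound the four contributions using, respectively, the $\DoF$-interpolant together with the stability and continuity of $\ah$, polynomial approximation of $f$, insertion of $\PizF$ through the nonconformity constraint~\eqref{nonconforming-condition}, and the splitting $\bhE(u-\qp,\vh)+\JcalE(\qp,\vh)=\cH(\dpt\Pistar(u-\qp),\vh)_{0,\E}+\ahE(u-\qp,\vh)+\JcalE(u,\vh)$ handled with the stability of $\Pistar$, inverse estimates, and trace-in-time inequalities. However, two of your stated sub-steps fail as written, and a third glosses over the step that actually makes the rate come out right.

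First, the inequality $\Norm{\vh-\Pizpmo\vh}_{0,\E}\lesssim\h\Norm{\nablax\vh}_{0,\E}$ is false on $\VhE$: for a nonconstant $\vh=\vh(t)\in\Pbb_\p(\In)$ (which lies in $\VhE$ since $\Pbb_\p(\E)\subset\VhE$) one has $\nablax\vh=0$ while $\vh-\PizE\vh\neq0$, because $\Pbb_\p(\In)\not\subset\Pbb_{\p-1}(\E)$; the only spatial Poincar\'e inequality available, Proposition~\ref{proposition:YE-seminorm-Poincare}, controls the distance to $\Pbb_\p(\In)$, not to $\Pbb_{\p-1}(\E)$. The orthogonality trick is unnecessary anyway: Cauchy--Schwarz, $\Norm{f-\PizE f}_{0,\E}\lesssim\h^\p\SemiNorm{f}_{\p,\E}$, and the global Poincar\'e inequality~\eqref{global-Poincare} already give the claimed rate $\h^\p$. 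Second, in $\JcalE(u,\vh)$ you propose to control $\Norm{\vh^{(n)}(\cdot,\tnmo)}_{0,\Ex}$ ``via trace-in-time and global Poincar\'e''; a trace-in-time inequality brings in $\htime\Norm{\dpt\vh}_{0,\E}^2$, and $\dpt\vh$ is \emph{not} controlled by $\Norm{\vh}_{\Ytaun}$ for a virtual function. What is needed is the stabilization-type bound $\htime^{-1/2}\Norm{\vh^{(n)}(\cdot,\tnmo)}_{0,\Ex}\lesssim\SemiNorm{\vh}_{\YE}$, obtained from the norm-equivalence and scaling arguments underlying~\eqref{stab-prop-0} and Proposition~\ref{proposition:properties-explicit-stabilization}, not from a trace inequality on $\vh$ itself. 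Finally, in the nonconformity term, a ``standard trace inequality'' applied to $\jump{\vh}_\F$ produces an $\hEx^{-1/2}\Norm{\vh}_{0,\E}$ contribution that loses half a power of $\h$; to avoid this you must first subtract elementwise time polynomials $\qpt\in\Pbb_\p(\In)$ from the one-sided traces (admissible because $\PizF$ of the scalar jump vanishes) and then invoke the local Poincar\'e inequality~\eqref{particular-Poincare}. Your sketch omits this step, and without it the facet estimate only yields $\h^{\p-1/2}$.
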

\begin{proof}
We estimate the four terms on the right-hand side of~\eqref{Strang-estimate} separately.
The assertion then follows by combining the four bounds we provide below.

\medskip
\noindent \textbf{Part \emph{i)} VE interpolation error.}
For any~$\qp$ in~$\Scalptaun$, the triangle inequality implies
\begin{equation} \label{bound_II}
\Norm{u-\uI}_{\Ytaun}
\le \SemiNorm{u-\qp}_{\Ytaun} + \SemiNorm{\qp-\uI}_{\Ytaun}.
\end{equation}
We focus on the second term on the right-hand side. 
For any $\E$ in~$\taun$,
$(\qp-\uI){}_{|_{\E}}$ belongs to~$\VhE$.
Therefore, the stability bounds~\eqref{stability-bounds:local} entail
\begin{equation*}
\SemiNorm{\qp-\uI}_{\YE}^2
\lesssim \ahE(\qp-\uI,\qp-\uI)
\qquad \forall \E \in \taun.
\end{equation*}
Since~$\uI$ is the $\DoFs$ interpolant of~$u$ and~$\ah(\cdot,\cdot)$ is computed via the $\DoFs$, the above inequality also implies
\begin{equation*}
\SemiNorm{\qp-\uI}_{\Ytaun}^2 
\lesssim \ah(\qp-\uI,\qp-\uI)
=  \ah(\qp-u,\qp-u).
\end{equation*}
Furthermore, using the discrete continuity property~\eqref{continuity:continuous},
we arrive at
\begin{equation*}
\SemiNorm{\qp-\uI}_{\Ytaun}^2
\lesssim \sum_{\E \in \taun}
\left( \hE^{-2} \Norm{u-\qp}_{0,\E}^2
+ \SemiNorm{u-\qp}_{1,\E}^2 \right).
\end{equation*}
Inserting this into \eqref{bound_II} and using standard polynomial approximation results yield
\begin{equation*}
\Norm{u-\uI}_{\Ytaun}
\lesssim \h^\p \SemiNorm{u}_{\p+1, \taun}.
\end{equation*}

\medskip 
\noindent \textbf{Part \emph{ii)} Handling the variational crime on the right-hand side~$f$.}
Using the definition of~$\Pizpmo$,
standard polynomial approximation estimates,
and the global discrete Poincar\'e inequality~\eqref{global-Poincare} entail
\begin{equation*}
\begin{split}
& (f- \Pizpmo f,\vh)_{0,\QT}
  \le \sum_{\E \in \taun} \Norm{f - \PizE f}_{0,\E} \Norm{\vh }_{0,\E}
\lesssim \h^\p \SemiNorm{f}_{\p,\taun} \Norm{\vh}_{0,\QT}
  \lesssim \h^\p \SemiNorm{f}_{\p,\taun} \Norm{\vh}_{Y(\taun)} .
\end{split}
\end{equation*}

\medskip
\noindent \textbf{Part \emph{iii)} Handling the variational crime of the time-like nonconformity.}
We estimate
\begin{equation*}
\sup_{0\ne \vh \in \Yh} \frac{\vert \NCh(u,\vh) \vert}{\Norm{\vh}_{\Ytaun}}
= 
\sup_{0\ne \vh \in \Yh} \frac{\vert \nu \sum_{\F \in \Fcalh} \int_{\In} \int_{\Fx} \nablax u \cdot \llbracket\vh\rrbracket_{\F}  \dS\, \dt\vert }{\Norm{\vh}_{\Ytaun}}.
\end{equation*}
We present estimates on a single facet~$\F=\Fx \times \In$.
For the sake of simplicity,
we assume that~$\F$ is an internal time-like facet shared by the elements~$\E_1$ and~$\E_2$.
Using the definition of the spatial nonconformity of the space~$\Yh$, see~\eqref{nonconforming-condition},
and the properties of $L^2$ projectors,
for all~$\qpto$, $\qptt$ in~$\Pbb_\p(\In)$,
we write
\footnote{Here we use the scalar normal jump~$\jump{\cdot}$ defined in the proof of Proposition~\ref{prop:global-Poincare}.}
\begin{equation*}
\begin{split}
& \left| \int_{\In}\int_{\Fx} \nablax u \cdot \llbracket\vh\rrbracket_{\F} \dS\, \dt \right| 
= \left| \int_{\In} \int_{\Fx} \nablax u \cdot \nbfFx \llbracket\vh\rrbracket \dS\, \dt \right| \\ 
& = \left| \int_{\In} \int_{\Fx} \left(\nablax u \cdot \nbfFx - \PizF\left(\nablax u \cdot \nbfFx\right)\right) \llbracket\vh\rrbracket \dS\, \dt \right| \\
& = \left| \int_{\In} \int_{\Fx} \left(\nablax u \cdot \nbfFx - \PizF\left(\nablax u \cdot \nbfFx\right)\right) \left((\vh{}_{|_{K_2}} - \qptt ) - (\vh{}_{|_{K_1}} - \qpto )\right) \dS\, \dt \right|.
\end{split}
\end{equation*}
Next, use the Cauchy-Schwarz inequality,
the triangle inequality,
standard properties of the $L^2$ projector,
a trace inequality, the local quasi-uniformity of the space-time mesh,
and arrive at
\begin{equation*}
\begin{split}
& \left| \int_{\In}\int_{\Fx} \nablax u \cdot \llbracket\vh\rrbracket_{\F} \dS\, \dt \right|  \\
& \leq \Norm{\nablax u \cdot \nbfFx - \PizF\left(\nablax u \cdot \nbfFx\right)}_{L^2(F)}
\left(\Norm{\vh{}_{|_{K_2}} - \qptt }_{L^2(F)} + \Norm{\vh{}_{|_{K_1}} - \qpto }_{L^2(F)}\right)\\
& \leq \Norm{\nablax \left(u - \Pi_{p+1}^{0, \E_1} u\right) \cdot \nbfFx}_{L^2(F)} 
\left(\Norm{\vh{}_{|_{K_2}} - \qptt }_{L^2(F)} + \Norm{\vh{}_{|_{K_1}} - \qpto }_{L^2(F)}\right)\\
& \lesssim \Big(\h_{\E_1}^{-\frac12} \SemiNorm{u - \Pi_{p+1}^{0, \E_1} u}_{Y(\E_1)}
+ \h_{\E_1}^{\epsilon}\SemiNorm{u-\Pi_{p+1}^{0, \E_1} u}_{\frac32+\varepsilon,\E_1} \Big)  \\
& \quad \times \Big( \sum_{j=1,2} 
         \Big(\h_{\E_j}^{-\frac12} \Norm{\vh-\qptj}_{0,\E_j}
         + \h_{\E_j}^{\frac12} \SemiNorm{\vh}_{Y(\E_j)} \Big)\Big).
\end{split}
\end{equation*}
An application of~\eqref{particular-Poincare} yields
\begin{equation*}
\begin{split}
& \left| \int_{\In}\int_{\Fx} \nablax u \cdot \llbracket\vh\rrbracket_{\F} \dS\, \dt \right| 
\lesssim \left( \SemiNorm{u - \Pi_{p+1}^{0, \E_1} u}_{Y(\E_1)} 
+ \h_{\E_1}^{\epsilon+\frac12} \SemiNorm{u-\Pi_{p+1}^{0, \E_1} u}_{\frac32+\varepsilon,\E_1} \right)
         \sum_{j=1,2}  \SemiNorm{\vh}_{Y(\E_j)}.
\end{split}
\end{equation*}
Summing up over all the elements and using approximation properties of the $L^2$ projector,
we eventually get
\begin{equation*}
\sup_{0\ne \vh \in \Yh} \frac{\vert \NCh(u,\vh) \vert}{\Norm{\vh}_{\Ytaun}}
\lesssim \h^{\p}
\SemiNorm{u}_{\p+1,\taun}.
\end{equation*}

\medskip
\noindent \textbf{Part \emph{iv.a)} Polynomial approximation error of~$\bE(\cdot,\cdot)$ type.}
Let~$\qp$ be in~$\Scalptaun$.
Using the Cauchy-Schwarz inequality twice
and the definition of the bilinear form~$\bE(\cdot,\cdot)$ give
\begin{equation*}
\bE(u \!-\! \qp,\vh)
= \cH\left(\dpt(u \!-\! \qp),\vh\right)_{0,\E}
   + \nu \left(\nablax(u \!-\! \qp),\nablax\vh\right)_{0,\E}
\lesssim \SemiNorm{u \!-\! \qp}_{1,\E} 
         ( \Norm{\vh}_{0,\E} + \SemiNorm{\vh}_{\YE}). 
\end{equation*}
Summing up over all the elements, using an~$\ell^2$ Cauchy-Schwarz inequality,
and recalling the global Poincar\'e-type inequality~\eqref{global-Poincare},
we can write
\begin{equation} \label{bound-IA}
\sum_{\E \in \taun} \bE(u-\qp,\vh)
\lesssim \SemiNorm{u-\qp}_{1,\taun}  \Norm{\vh}_{\Ytaun}.
\end{equation}

\medskip
\noindent \textbf{Part \emph{iv.b)} Polynomial approximation error of~$\bhE(\cdot,\cdot) + \JcalE(\cdot,\cdot)$ type.}
Thanks to definitions~\eqref{eq:upwind} and~\eqref{time-inconsistency-operator}
on each element~$\E=\Ex\times\In$ in~$\taun$,
for all~$\vh$ in~$\Yh$,
we have
\begin{equation} \label{Part-1-initial-step}
\bhE(u-\qp, \vh) + \JcalE(\qp,\vh)=
\cH(\dpt\Pistar(u-\qp),\vh)_{0,\E}
+ \ahE(u-\qp,\vh)
+\JcalE(u,\vh),
\end{equation}
where~$\qp$ is the same as in Part \emph{iv.a)}.
We first focus on the second term.
Using the stability bound~\eqref{stability-bounds:local}
and the continuity property~\eqref{continuity:continuous:0}
with~$\hEx\simeq\htime$,
we arrive at
\begin{equation*}
\ahE(u-\qp,\vh)
\le \ahE(u-\qp,u-\qp)^{\frac12}
      \ahE(\vh,\vh)^{\frac12}
 \lesssim \left( \hE^{-1} \Norm{u-\qp}_{0,\E}
            + \SemiNorm{u-\qp}_{1,\E} \right)
            \SemiNorm{\vh}_{\YE}.
\end{equation*}
Next, we deal with the first term on the right-hand side of~\eqref{Part-1-initial-step}.
The Cauchy-Schwarz inequality yields
\begin{equation*}
(\dpt\Pistar(u-\qp),\vh)_{0,\E}
\le \Norm{\dpt\Pistar(u-\qp)}_{0,\E}
    \Norm{\vh}_{0,\E}.
\end{equation*}
A polynomial inverse inequality gives
\begin{equation} \label{easy-inverse-estimate}
\Norm{\dpt\Pistar(u-\qp)}_{0,\E}
\lesssim \hE^{-1}
         \Norm{\Pistar(u-\qp)}_{0,\E}.
\end{equation}
By using~\eqref{inverse:Pistar},
the definition of~$\Pistar$,
the stability of the~$L^2$ orthogonal projection,
and the trace inequality,
we arrive at
\begin{equation} \label{main-step-Part-1b}
\begin{split}
\Norm{\Pistar(u-\qp)}_{0,\E}
& \lesssim \Norm{\PizE \Pistar(u-\qp)}_{0,\E}
    + \hE^{\frac12} \Norm{\PizEx \Pistar(u-\qp)(\cdot,\tnmo)}_{0,\Ex}\\
& = \Norm{\PizE (u-\qp)}_{0,\E}
    + \hE^{\frac12} \Norm{\PizEx (u-\qp)(\cdot,\tnmo)}_{0,\Ex}\\
& \le \Norm{u-\qp}_{0,\E}
+ \hE^{\frac12} \Norm{(u-\qp)(\cdot,\tnmo)}_{0,\Ex}
\lesssim \Norm{u-\qp}_{0,\E} + \hE \SemiNorm{u-\qp}_{1,\E}.
\end{split}
\end{equation}
Therefore, we obtain
\begin{equation*}
(\dpt\Pistar(u-\qp),\vh)_{0,\E}
\lesssim \left(\hE^{-1} \Norm{u-\qp}_{0,\E} + \SemiNorm{u-\qp}_{1,\E}\right)\Norm{\vh}_{0,\E}.
\end{equation*}
Finally, we estimate the third term on the right-hand side of~\eqref{Part-1-initial-step}.
Since the initial condition~$u(\cdot, 0)$ is zero,
$\JcalE(u,\vh)=0$ if~$n=1$.
So, we consider the case~$n\ge2$:
\begin{equation*}
\begin{split}
& \JcalE(u,\vh)
= \cH (\Pistar u^{(n)}(\cdot,\tnmo) - \Pistar u^{(n-1)}(\cdot,\tnmo), \vh^{(n)}(\cdot,\tnmo))_{0,\Ex} \\
& \!\lesssim \hE^{\frac12} \!\! \left( \Norm{u(\cdot,\tnmo) 
\!-\! \Pistar u^{(n)}(\cdot,\tnmo)}_{0,\Ex}
\!\!\!\!\!\!+ \Norm{u(\cdot,\tnmo) \!-\! \Pistar u^{(n-1)}(\cdot,\tnmo)}_{0,\Ex} \right)
 \hE^{-\frac12}\Norm{\vh^{(n)}(\cdot,\tnmo)}_{0,\Ex} \!\!.
\end{split}
\end{equation*}
Proceeding as in Proposition~\ref{proposition:properties-explicit-stabilization}, 
it is possible to show that
\begin{equation*}
\hE^{-\frac12}\Norm{\vh^{(n)}(\cdot,\tnmo)}_{0,\Ex}
\lesssim \SemiNorm{\vh}_{\YE}.
\end{equation*}
Thus, we can focus on the two terms involving~$u$.
As for the first one, we use a trace inequality along the time direction,
add and subtract the same~$\qp$ as above,
recall that~$\Pistar$ preserves polynomials of degree at most~$\p$,
use the triangle inequality,
apply the polynomial inverse estimate~\eqref{easy-inverse-estimate},
and get
\begin{equation*}
\begin{split}
& \hE^{\frac12} \Norm{u(\cdot,\tnmo) - \Pistar u^{(n)}(\cdot,\tnmo)}_{0,\Ex}
  \lesssim \Norm{u - \Pistar u}_{0,\E}
    + \hE \Norm{\dpt(u - \Pistar u)}_{0,\E}\\
& \le \Norm{u - \qp}_{0,\E}
    + \hE \Norm{\dpt(u - \qp)}_{0,\E}
    + \Norm{\Pistar (u-\qp)}_{0,\E}
    + \hE \Norm{\dpt(\Pistar (u-\qp)}_{0,\E}\\
& \le \Norm{u - \qp}_{0,\E}
    + \hE \Norm{\dpt(u - \qp)}_{0,\E}
    + \Norm{\Pistar (u-\qp)}_{0,\E}.
\end{split}
\end{equation*}
Next, we apply estimate~\eqref{main-step-Part-1b} and get
\begin{equation*}
\hE^{\frac12} \Norm{u(\cdot,\tnmo) - \Pistar u^{(n)}(\cdot,\tnmo)}_{0,\Ex}
\lesssim \Norm{u - \qp}_{0,\E}
    + \hE \SemiNorm{u - \qp}_{1,\E}.
\end{equation*}
For the second term involving $u$, we proceed analogously.
Setting
$\E' := \Ex \times \Inmo$
and using the local quasi-uniformity of the space-time mesh,
we get 
\begin{equation*}
\hE^{\frac12} \Norm{u(\cdot,\tnmo) - \Pistar u^{(n-1)}(\cdot,\tnmo)}_{0,\Ex}
\lesssim \Norm{u - \qp}_{0,\E'}
    + \h_{\E'} \SemiNorm{u - \qp}_{1,\E'}.
\end{equation*}
Summing over all the elements,
using standard manipulations (including~$\ell^2$ Cauchy-Schwarz inequalities),
and applying the global Poincar\'e type inequality~\eqref{global-Poincare} give
\begin{equation} \label{bound_IB}
\sum_{\E \in \taun} 
\left(\bhE(u-\qp,\vh) 
+ \JcalE(\qp,\vh)
\right)
\lesssim \sum_{\E \in \taun} \left(\hE^{-2} \Norm{u - \qp}_{L^2(K)}^2 + \SemiNorm{u - \qp}_{1, K}^2 \right)^{\frac{1}{2}} 
\Norm{\vh}_{Y(\taun)}.
\end{equation}
\medskip

\noindent \textbf{Conclusion of Part \emph{iv})} From~\eqref{bound-IA} and~\eqref{bound_IB},
which are valid for any $\qp\in\VhE$, and
standard polynomial approximation results, we obtain
\begin{equation*}
\sup_{0\ne\vh\in\Yh} \inf_{\qp\in\Scalptaun}
 \frac{ \sum_{\E \in \taun} 
 \left(\bhE(u-\qp,\vh) 
 - \bE(u-\qp,\vh) 
 + \JcalE(\qp,\vh) \right)}{\Norm{\vh}_{\Ytaun}} 
\lesssim \h^\p \SemiNorm{u}_{\p+1, \taun}.
\end{equation*}
This concludes Part \emph{iv}) and completes the whole proof.
\end{proof}

\section{Numerical results} \label{section:numerics}
In this section, we assess the error estimates proven in Theorem~\ref{theorem:a-priori:error-estimates}.
We developed an object-oriented MATLAB implementation to obtain high-order approximations of space-time $(1+1)$- and $(2+1)$-dimensional problems.
We briefly mention some relevant computational aspects regarding the numerical results below.
\begin{itemize}
\item In case of inhomogeneous initial and/or boundary conditions,
we set moments at $\Omega \times \{0\}$ and/or at $\partial \Omega \times (0, T)$ accordingly and modify the right-hand side.
This corresponds to a standard lifting procedure,
where the lifting has all the remaining moments equal to zero.
In this way,
in the presence of incompatible initial and boundary data,
no artificial compatibility condition is enforced on the discrete solutions.
\item In Theorem~\ref{theorem:a-priori:error-estimates},
error bounds are provided in the~$\Norm{\cdot}_{\Xhtaun}$ norm. 
Since the virtual element solution~$\uh$ to~\eqref{VEM} is not known in closed form and the error in the~$\Xhtaun$ norm is not computable,
we report the following associated error quantities:
\begin{subequations}
 \label{exact-errors}
\begin{equation}
\begin{split}
\EcalY
 & := \Norm{u-\PiN \uh}_{\Ytaun},
\quad 
 \EcalN := \Norm{\PiN(\Newtonh \Pistar (u - \uh))}_{\Ytaun}, \\
 (\EcalU)^2 & :=  \frac{\cH}{2} 
\Bigg(
\Norm{\Pistar(u - \uh)(\cdot, 0)}_{L^2(\Omega)}^2 + \Norm{\Pistar (u - \uh)(\cdot, T)}_{L^2(\Omega)}^2 
\\ 
& 
\quad +\sum_{n=2}^N\Norm{\Pistar (u - \uh)^{(n)}(\cdot,\tnmo) - \Pistar (u - \uh)^{(n-1)}(\cdot,\tnmo)}_{L^2(\Omega)}^2
\Bigg). 
\end{split}
\end{equation}
The~$\Xhtaun$ norm is related to the sum of $\EcalY$, $\EcalN$, and~$\EcalU$.
We also show the error in the~$L^2(\QT)$ norm, namely
\begin{equation}
\EcalL := \Norm{u - \Pistar \uh}_{L^2(\QT)},
\end{equation}
\end{subequations}
which is not covered by our theory. 
\item In all experiments, we take $\cH = \nu = 1$
and employ the stabilization in~\eqref{explicit:stabilization}.
\end{itemize}

\subsection{Results in (1+1)-dimension}  \label{subsection:1+1}
We use tensor-product meshes and uniform partitions along the space and time directions.

\subsubsection{Patch test} \label{subsection:patch-test-1D}
The discrete bilinear form~$\bh(\cdot,\cdot)$ in~\eqref{bh} is polynomial inconsistent;
see Lemma~\ref{lemma:consistency}.
However, thanks to the error estimates~\eqref{error:estimates},
the method in~\eqref{VEM} passes the patch test,
i.e., up to round-off errors,
polynomial solutions of order~$\p$ are approximated exactly.

We consider the following family of exact solutions
on 
$\QT = (0, 1) \times (0, 1)$:
\begin{equation} \label{exact-patch-1D}
u_p(x, t) = 
\begin{cases}
t^{p/2} x^{p/2}                                     & \text{if $p$ is even};\\
t^{(p-1)/2} x^{(p+1)/2} + t^{(p+1)/2} x^{(p-1)/2}   & \text{if $p$ is odd}.
\end{cases}
\end{equation}
For any $p \in \N$, $u_p$ belongs to~$\Pp{p}{\QT}$.
In Figure~\ref{patch-test-figs},
for $p = 1, \ldots, 5,$ we show the errors in the approximation of $u_p$ obtained using a sequence of meshes with~$\hEx = \htime = 5 \times 10^{-2} /  2^{i - 1}$, $i = 1, \ldots, 4$,
and approximation degree~$p$.
The scale of $10^{-10}$ in the figures validates the patch test.
The growth of the error observed while decreasing the mesh size
represents the actual effect of the condition number
when solving the linear systems stemming from~\eqref{VEM}.

\begin{figure}[!ht]
\centering
{
\includegraphics[width = 2.5in]{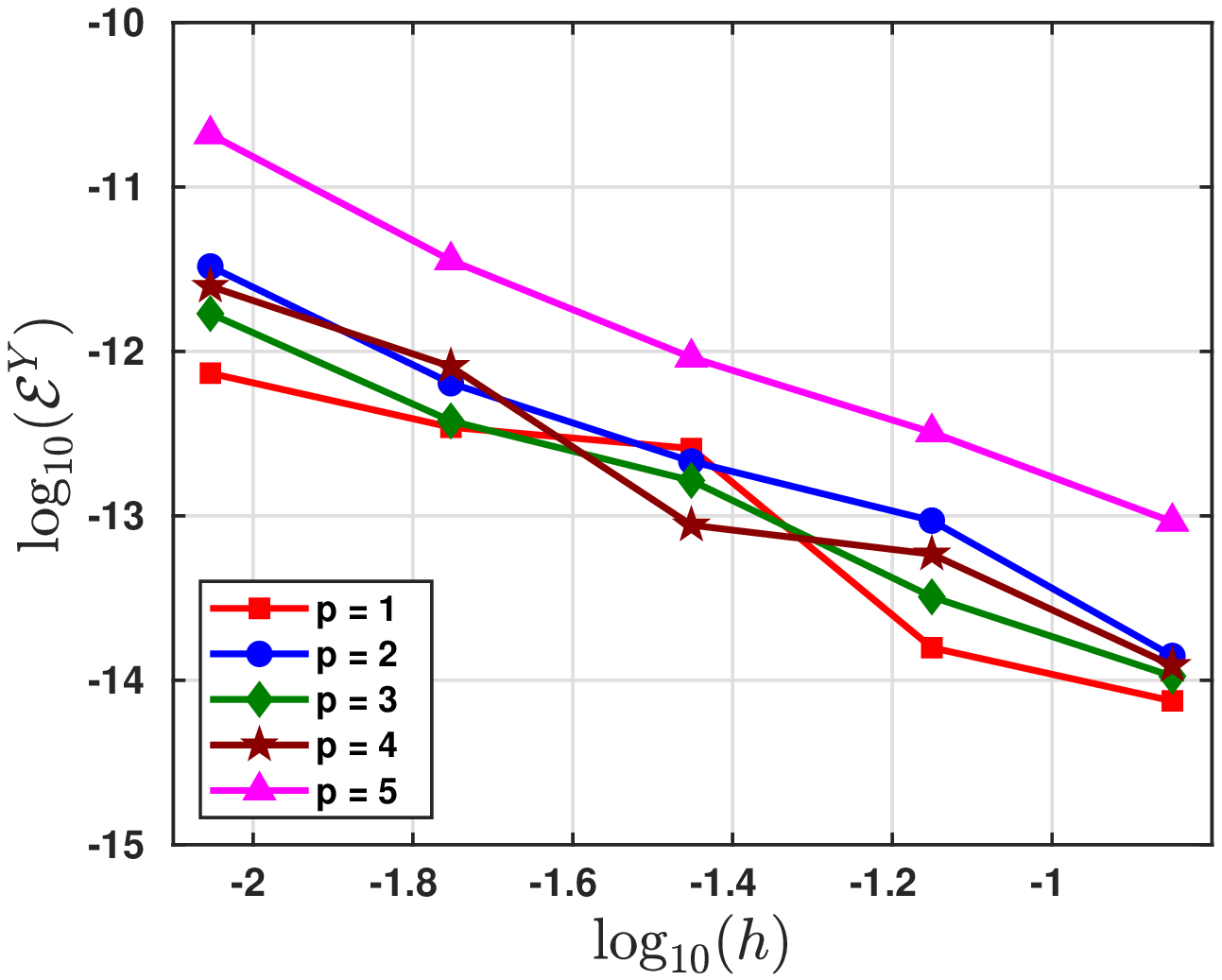}
}
\hspace{0.6cm} 
{
\includegraphics[width = 2.5in]{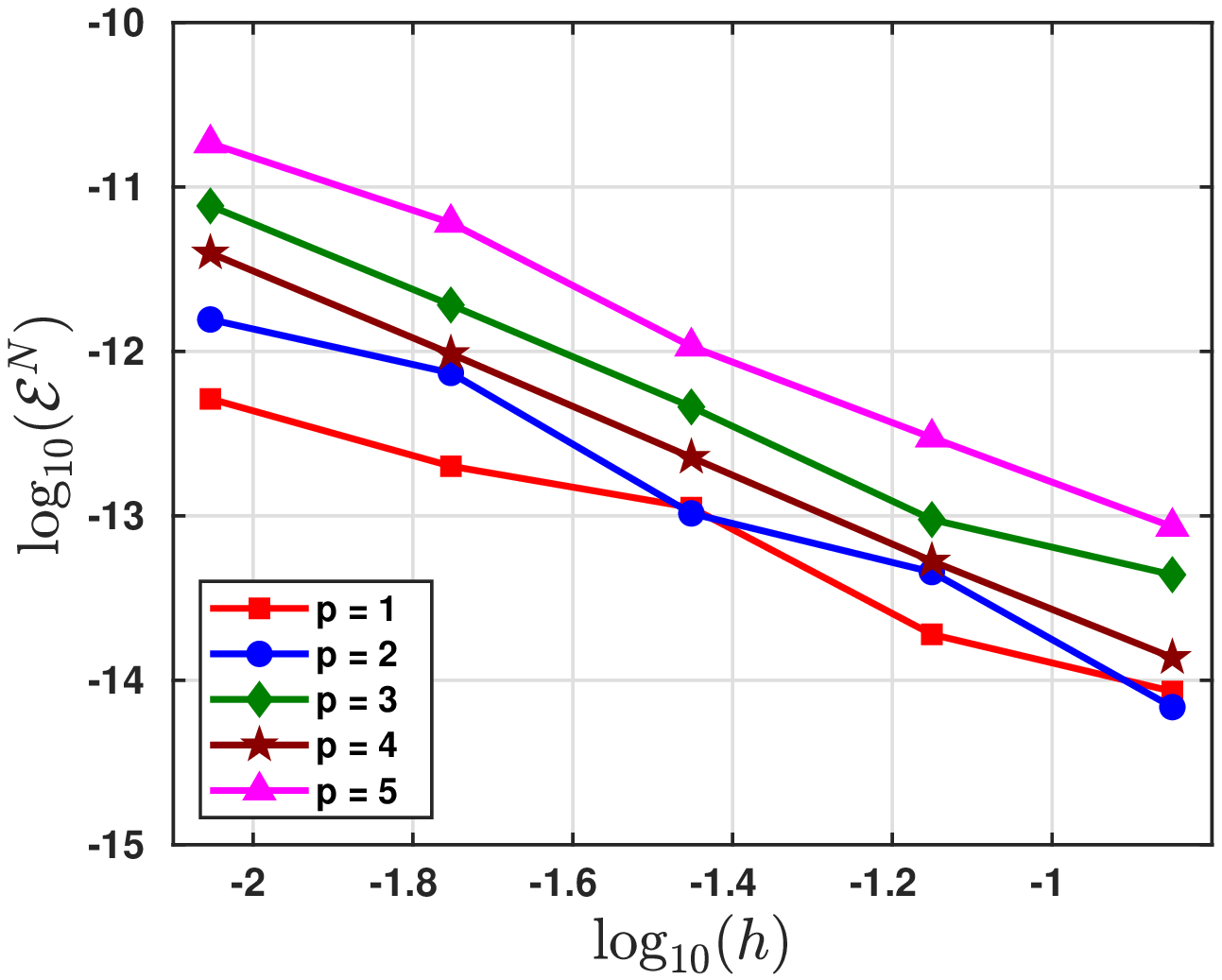}
}
\caption{$\h$-dependence of the errors in~\eqref{exact-errors} for the patch tests with solution~$u_p$ in \eqref{exact-patch-1D}.
\label{patch-test-figs}
}
\end{figure}

\subsubsection{Smooth solution} \label{exp:smooth-solution}
On the space-time domain~$\QT = (0, 1) \times (0, 1)$,
we consider
the 
problem with exact smooth solution
\begin{equation} \label{smooth-test-case}
u(x, t) = \sin(t)\sin(3\pi x).
\end{equation}
In Figure~\ref{fig:smooth-1D}, we show the rates of convergence \sg{of the errors in~\eqref{exact-errors}} obtained
using a sequence of meshes with~$\hEx = \htime = 0.2\times 2^{-i}$,
 for~$i =  1, \ldots, 5$,
and different approximation degrees~$p$.
We observe convergence of order~$\mathcal O (\h^{\p})$ for the error~$\EcalY$,
of order~$\mathcal{O}(\h^{\p+\frac12})$ for the error~$\EcalU$,
and of order~$\mathcal{O}(\h^{\p+1})$ for the errors~$\EcalN$
and~$\EcalL$.
Such rates of convergence are in agreement with estimate~\eqref{error:estimates}
and the approximation rates that might be expected from the norms in~\eqref{exact-errors}.

\begin{figure}[!ht]
\centering
{
\includegraphics[width = 2.5in]{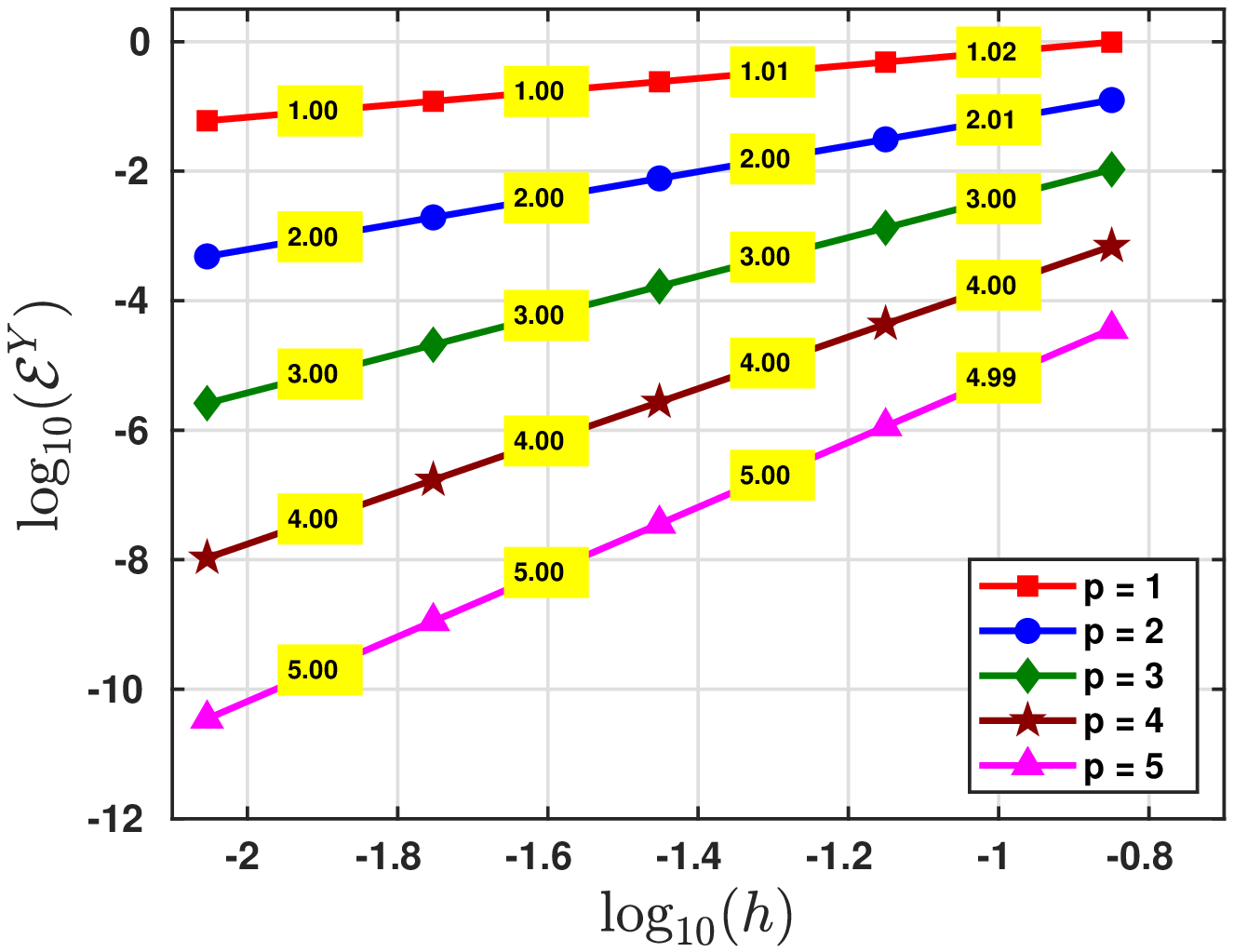}
}
\hspace{0.6cm}
{
\includegraphics[width = 2.5in]{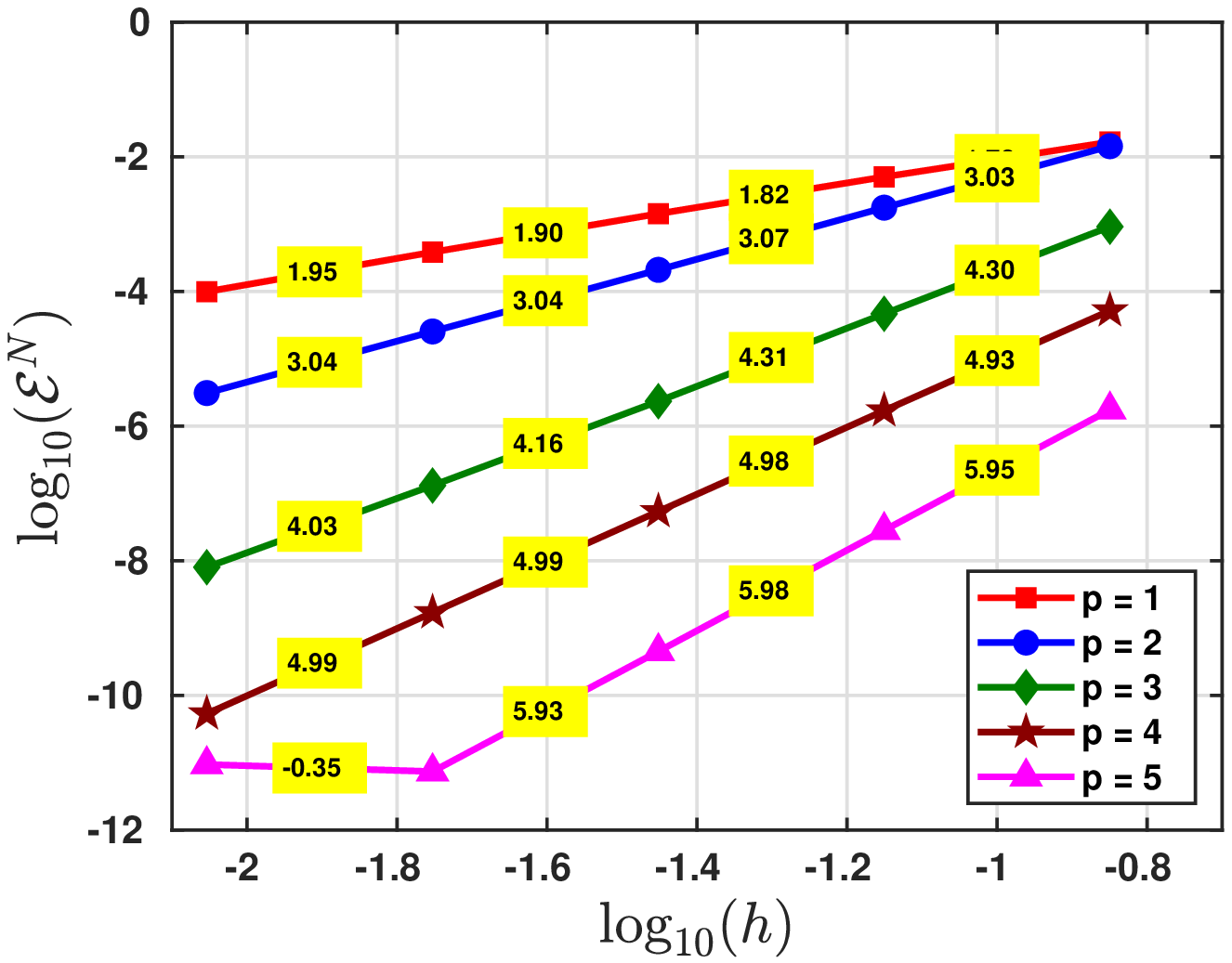}
}\\
{
\includegraphics[width = 2.5in]{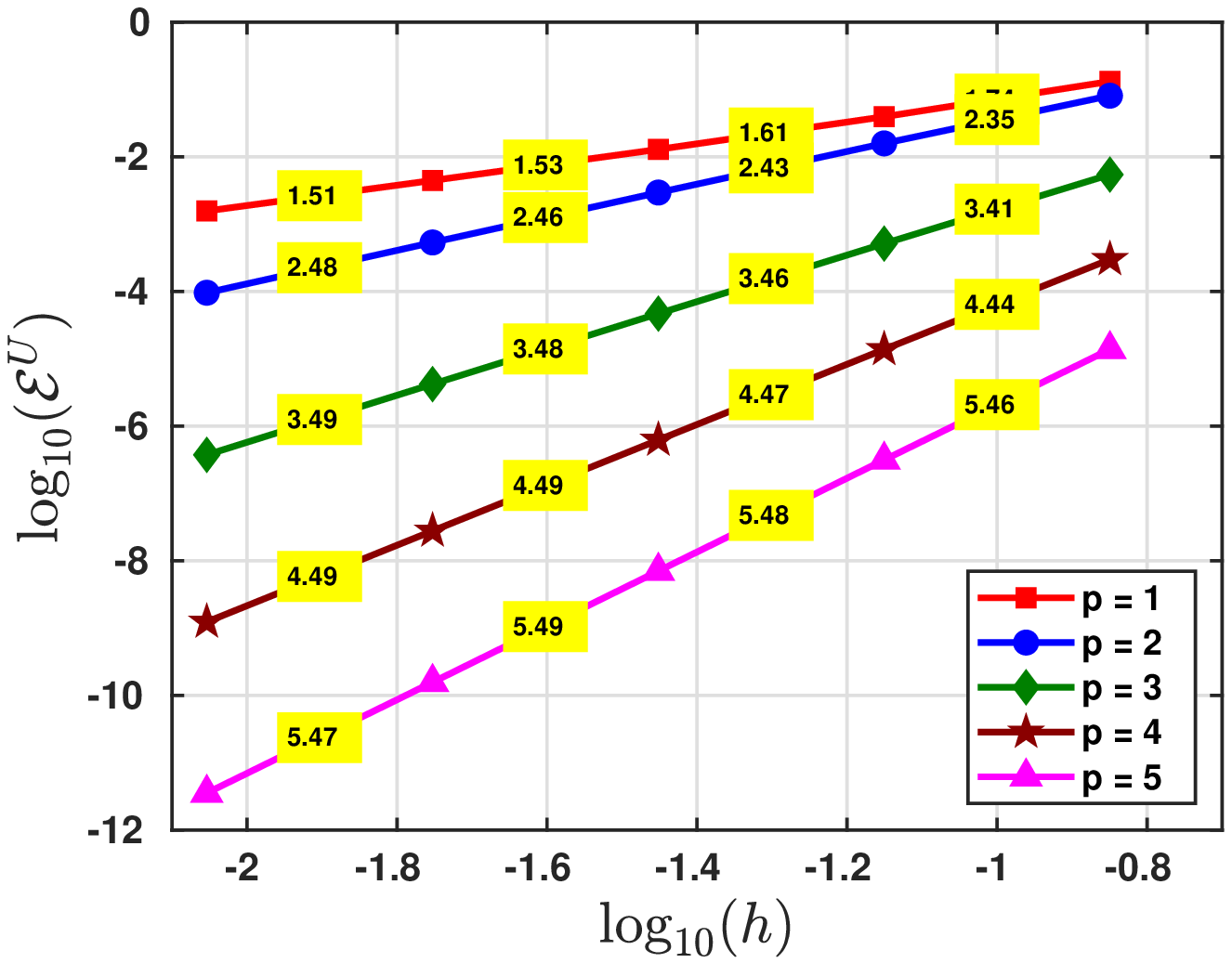}
}
\hspace{0.6cm}
{
\includegraphics[width = 2.5in]{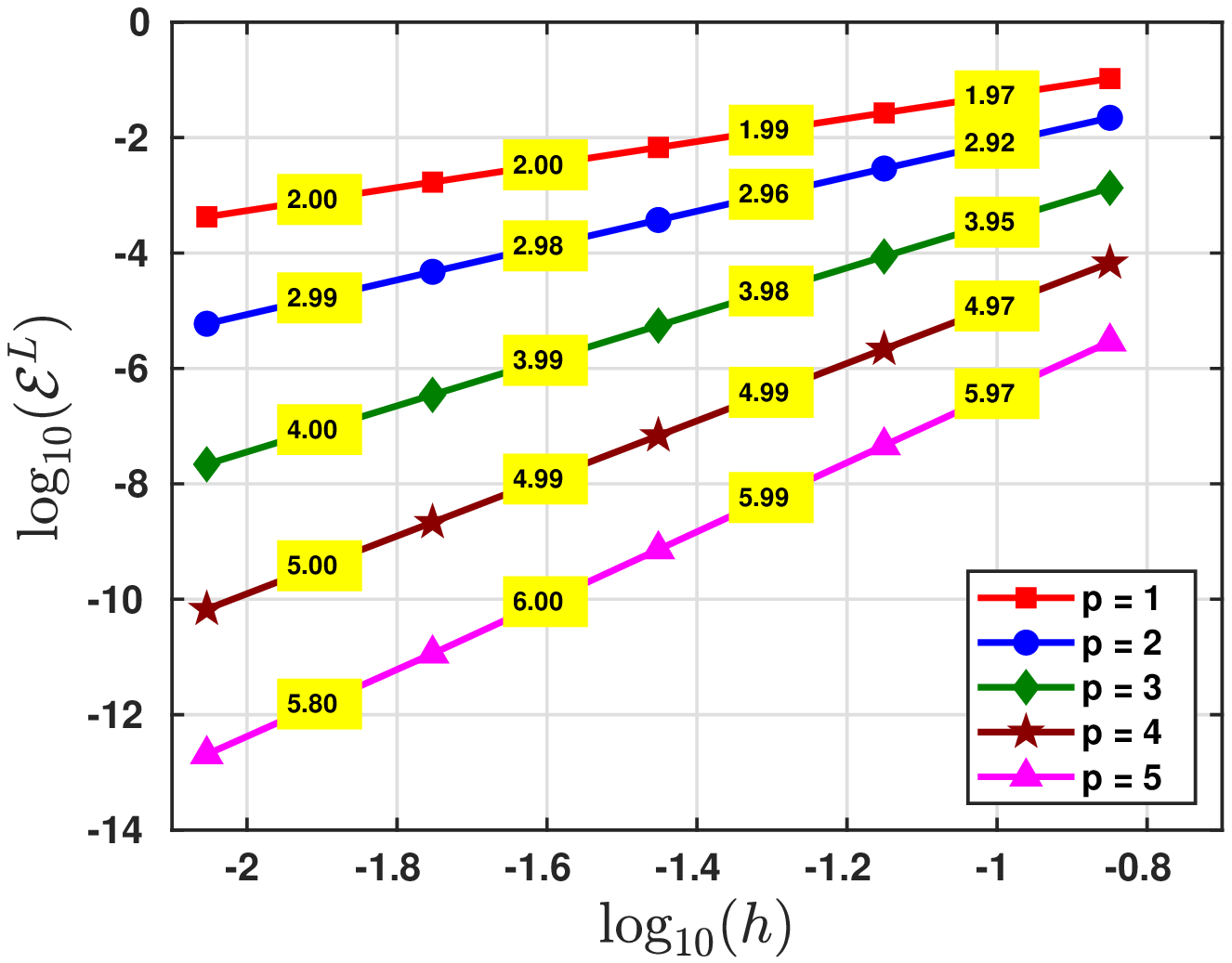}
}
\caption{$\h$-convergence of the errors in~\eqref{exact-errors} for the test case with smooth solution~\eqref{smooth-test-case}.
The numbers in the yellow rectangles denote the experimental orders of convergence.} \label{fig:smooth-1D}
\end{figure}

\subsubsection{Singular solutions}
We assess the convergence of the method for solutions with finite Sobolev regularity.
We use same sequence of meshes as in Subsection~\ref{subsection:patch-test-1D}.
For~$\QT = (0, 1)\times (0, 1)$ and~$\alpha > -1/2$,
we consider the singular solutions
\begin{equation} \label{singular-solution}
    u_\alpha (x,t) = t^{\alpha} \sin(\pi x).
\end{equation}
We have that~$u_\alpha$ and~$\partial_x u_\alpha$ belong to~$H^{\alpha + 1/2 - \epsilon}(0, 1; \mathcal{C}^{\infty}(0, 1))$
for any~$\epsilon > 0$.
The singularity occurs at the initial time.  The errors in~\eqref{exact-errors} are depicted in Figures~\ref{fig:non-smooth-1D-a55} and~\ref{fig:non-smooth-1D-a75} for~$\alpha=0.55$ and~$\alpha=0.75$.
We observe convergence of order $\mathcal{O}\left(h^{\min\{p, \alpha + 1/2\}}\right)$ for the error~$\EcalY$, of order~$\mathcal{O}(\h^{\alpha - \frac12})$ for the error~$\EcalN$, of order~$\mathcal{O}(h^{\alpha})$ for the error~$\EcalU$, and of order~$\mathcal{O}(h^{\alpha + \frac12})$ for the error~$\EcalL$.

For a continuous finite element discretization of formulation~\eqref{continuous-weak}, lower rates of convergence are obtained; see~\cite[Sect. 7.5.3]{Gomez:2023}.

\begin{figure}[ht]
\centering
{
\includegraphics[width = 2.5in]{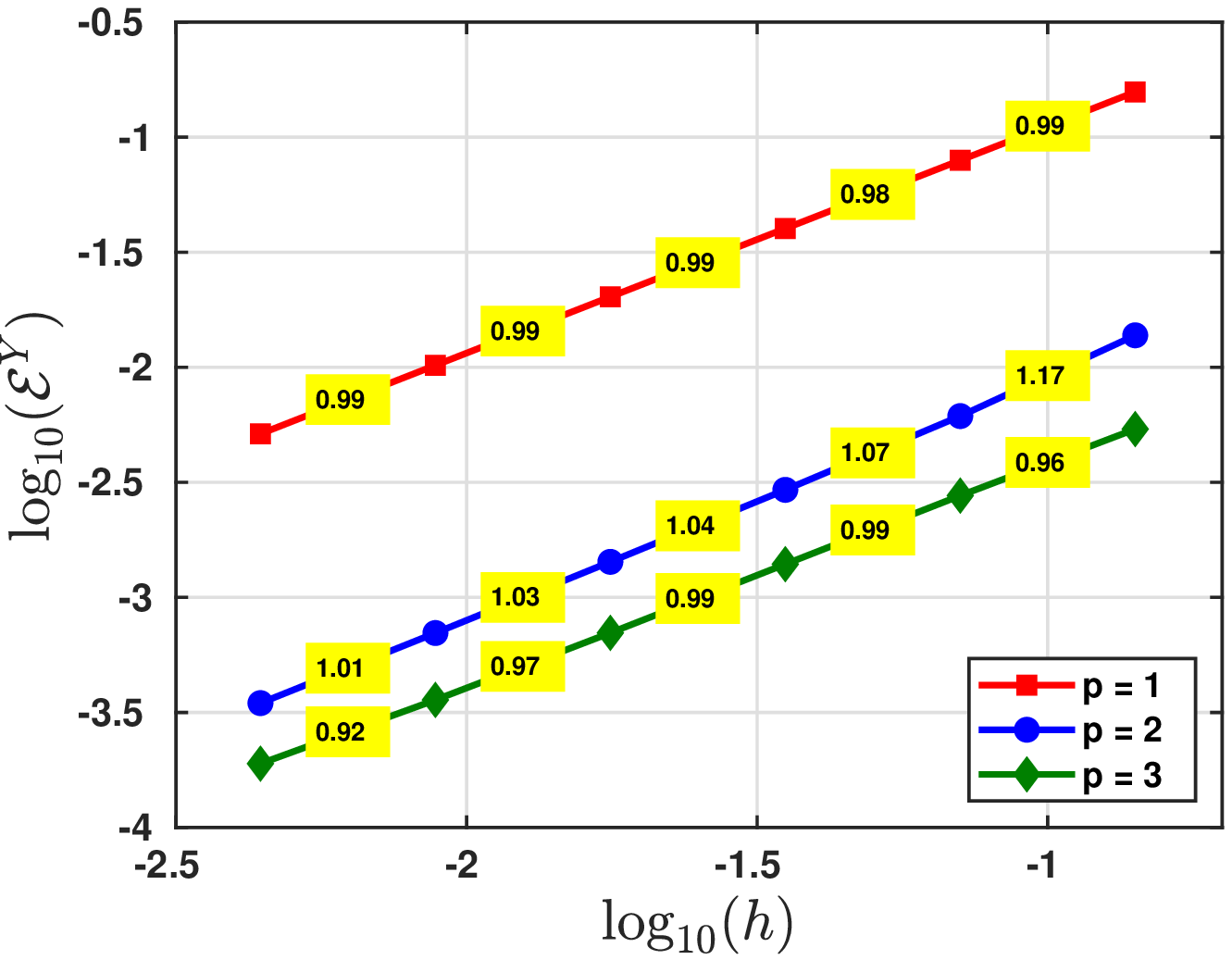}
}
\hspace{0.6cm}
{
\includegraphics[width = 2.5in]{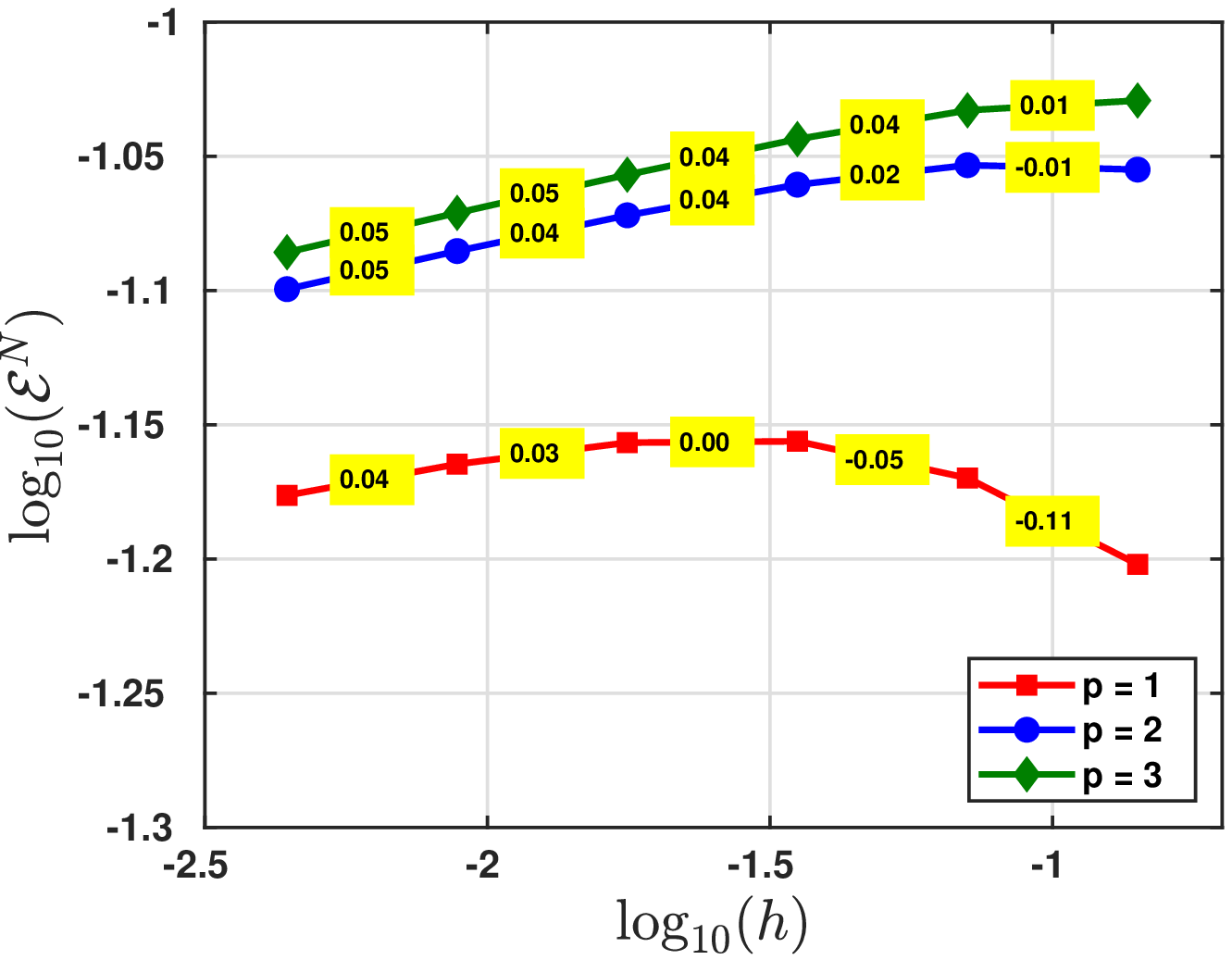}
} \\
{
\includegraphics[width = 2.5in]{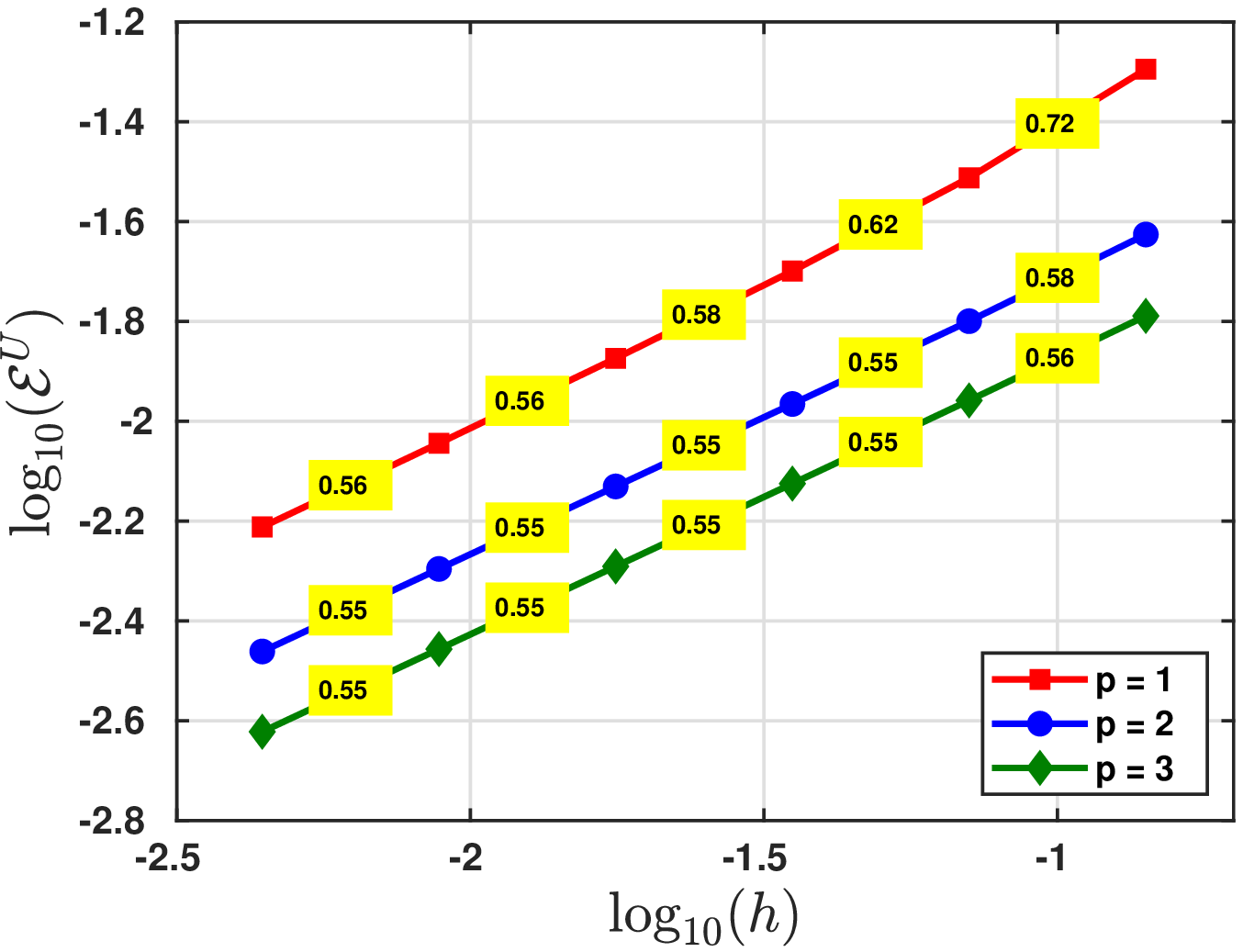}
}
\hspace{0.6cm}
{
\includegraphics[width = 2.4in, height = 2.0in]{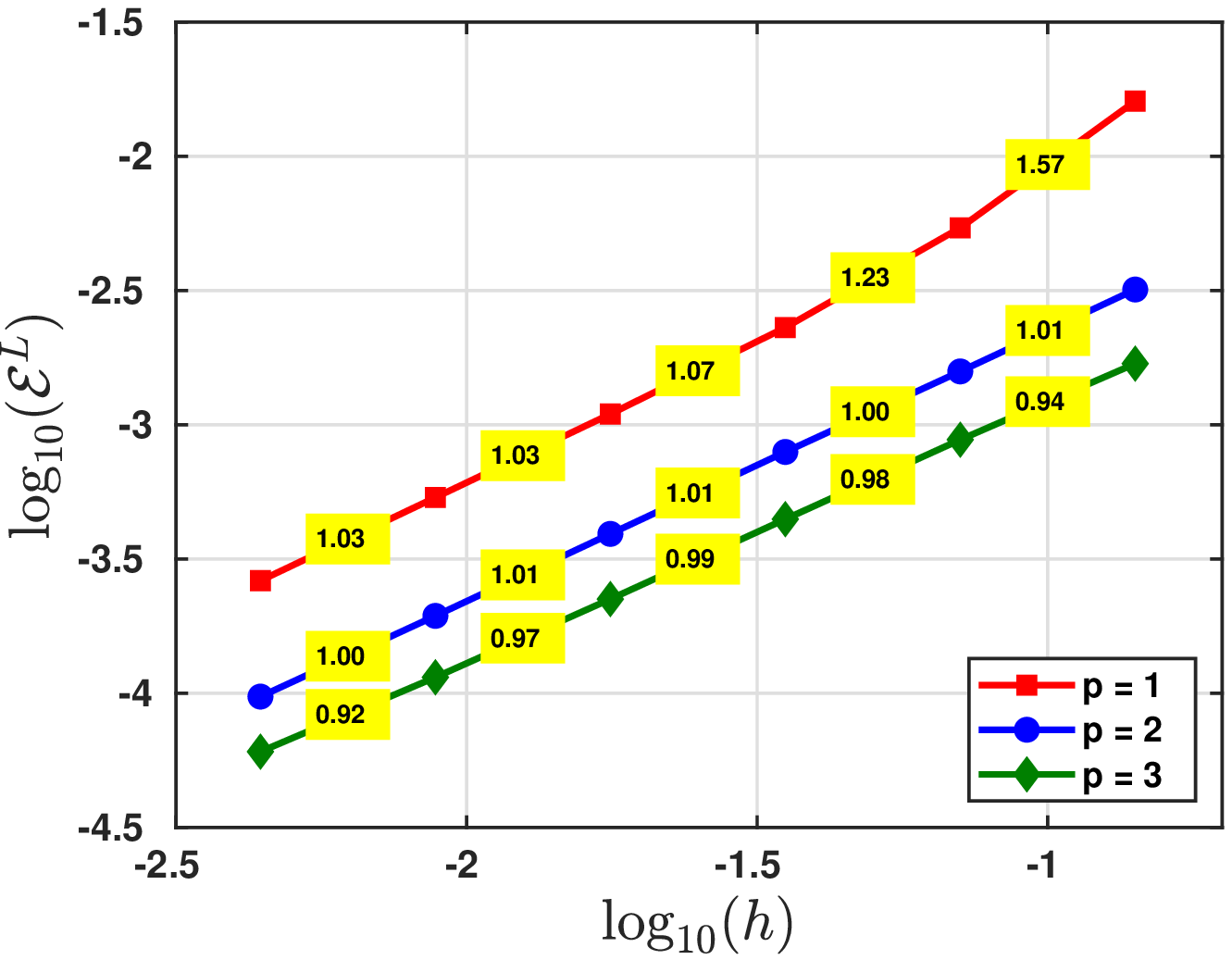}
}
\caption{$\h$-convergence of the errors in~\eqref{exact-errors} for the test case with} singular solution~$u_\alpha$ \eqref{singular-solution}
with~$\alpha = 0.55$.
\label{fig:non-smooth-1D-a55}
\end{figure}

\begin{figure}[ht]
\centering
{
\includegraphics[width = 2.5in]{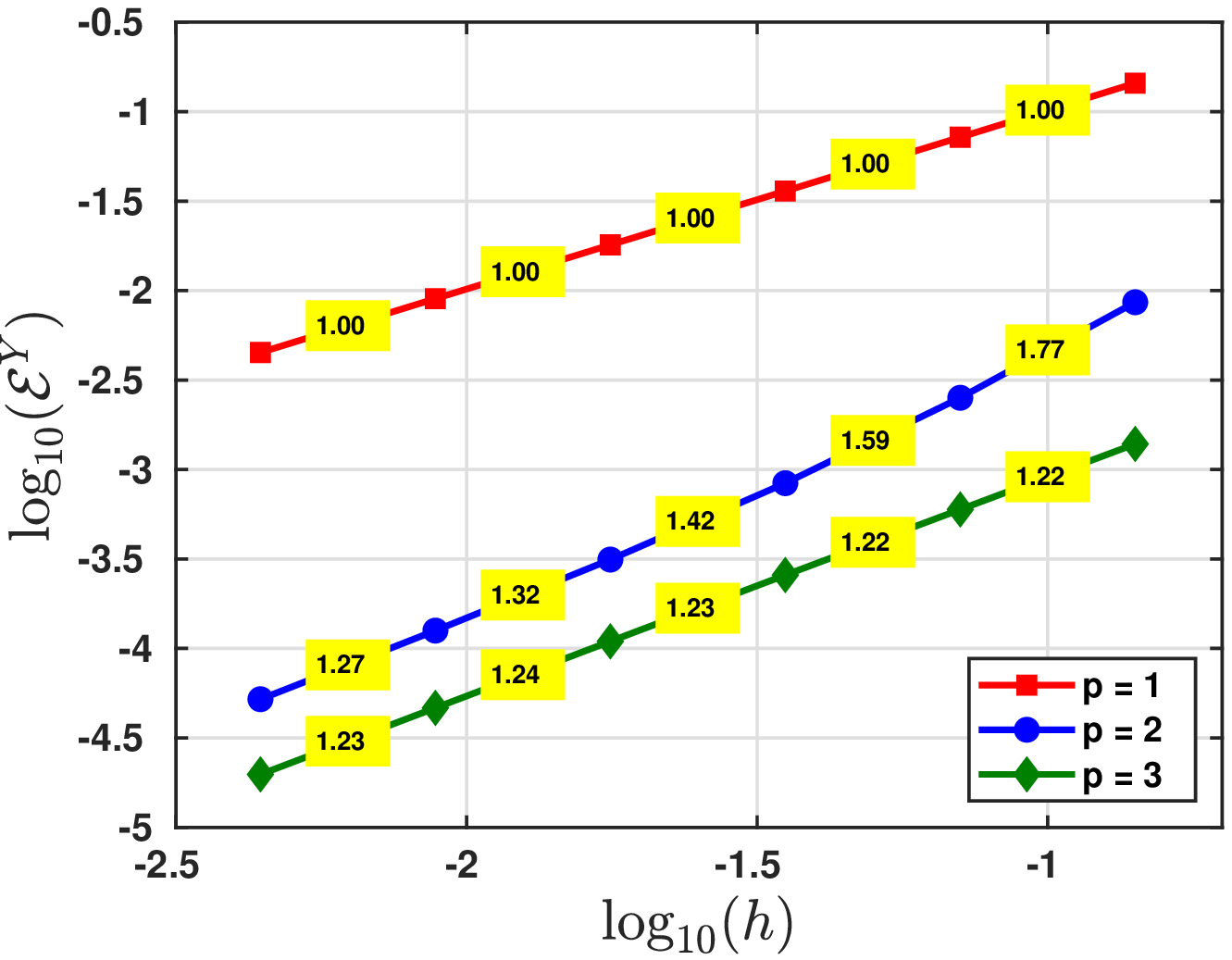}
}
\hspace{0.6cm}
{
\includegraphics[width = 2.5in]{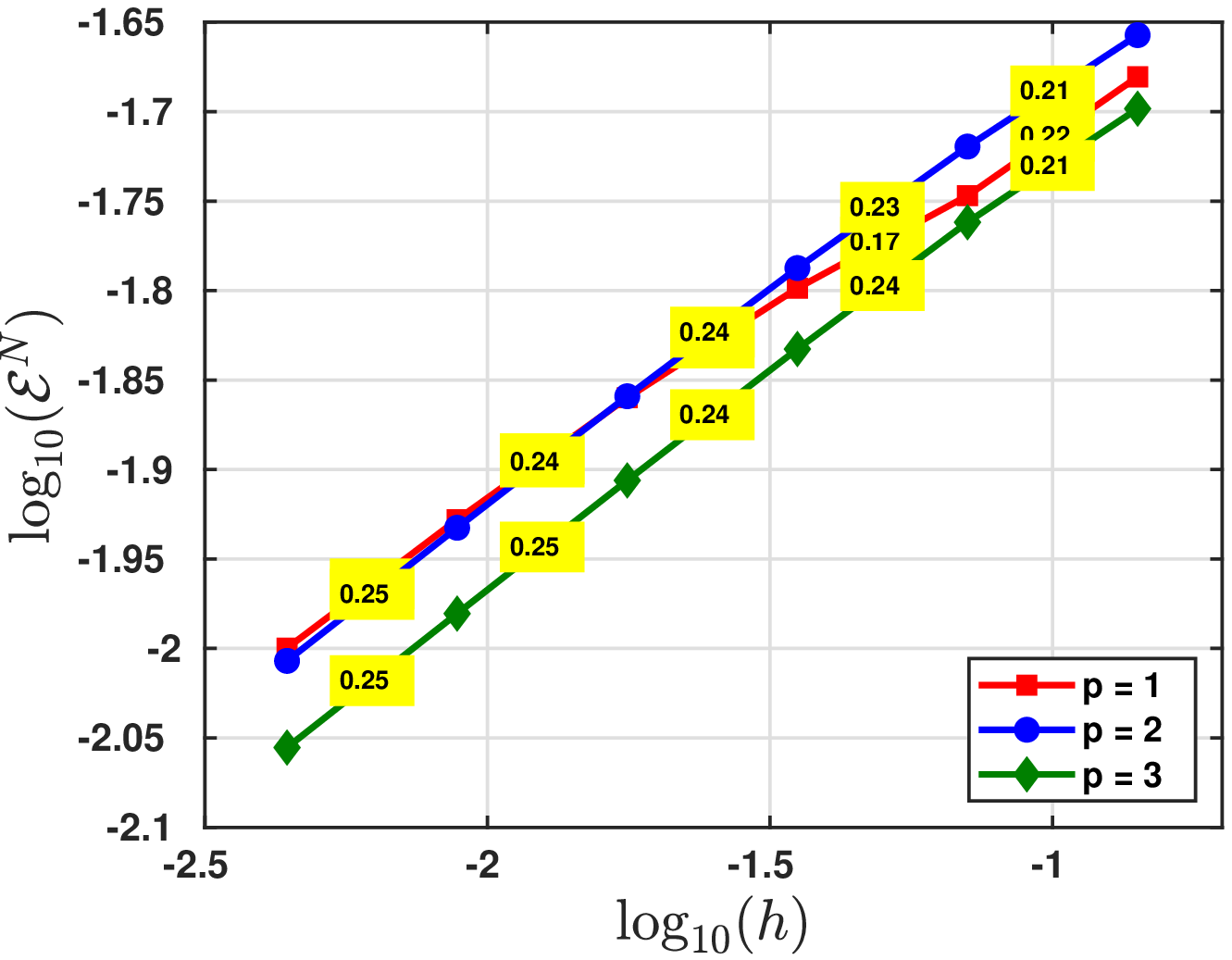}
} \\
{
\includegraphics[width = 2.5in]{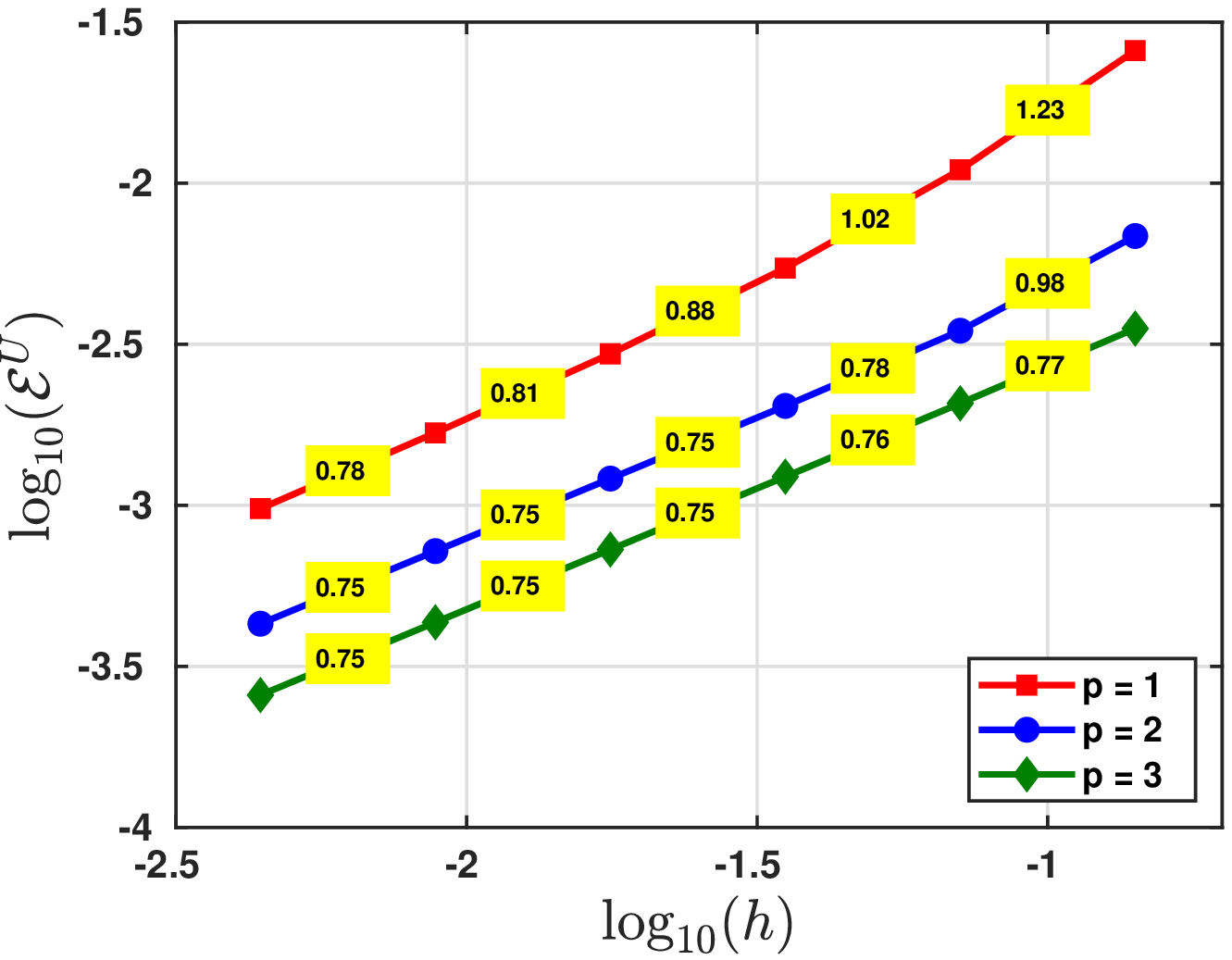}
}
\hspace{0.6cm}
{
\includegraphics[width = 2.5in]{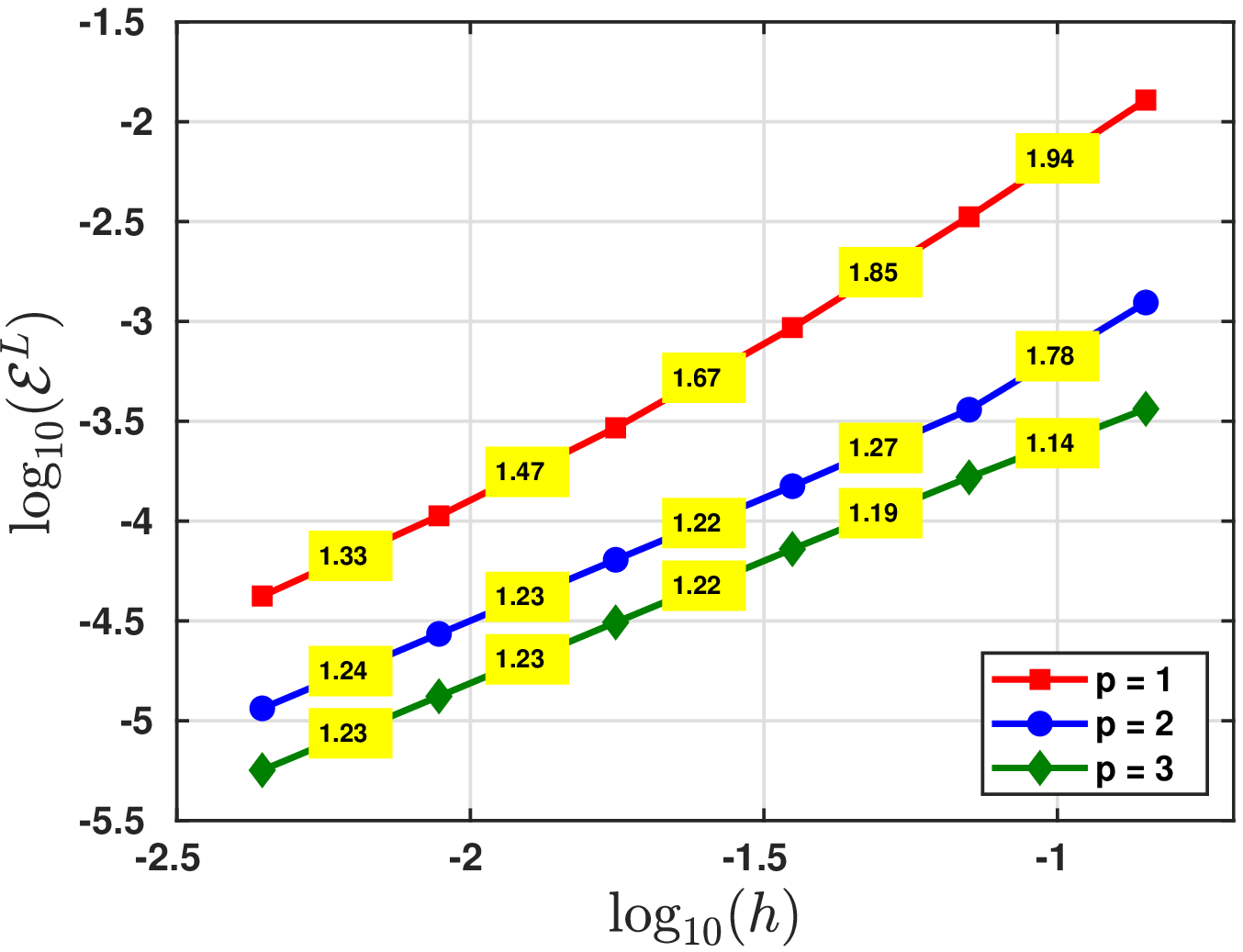}
}
\caption{$\h$-convergence of the errors in~\eqref{exact-errors} for the test case with singular solution~$u_\alpha$ \eqref{singular-solution}
with~$\alpha = 0.75$.}
\label{fig:non-smooth-1D-a75}
\end{figure}

\subsubsection{Incompatible initial and boundary conditions}
On the space-time domain~$\QT = (0, 1) \times (0, 1)$,
we consider the heat equation problem~\eqref{continuous-strong}
with zero source term~$(f = 0)$,
homogeneous Dirichlet boundary conditions~($u = 0$ on~$\partial \Omega \times (0,T)$),
and constant initial condition ($u = 1$ on~$\Omega \times \{0\}$).
The corresponding exact solution is given by
the Fourier series
\begin{equation} \label{incompatible-data-solution}
u(x, t) 
= \sum_{n = 0}^{\infty} \frac{4}{(2n + 1) \pi}
    \sin\left( (2n + 1) \pi x \right)
    \exp\left(-(2n + 1)^2 \pi^2 t\right).
\end{equation}
Due to the incompatibility of the initial and boundary conditions,
$u$ is discontinuous at~$(0,0)$ and~$(1,0)$, 
and does not belong to~$H^1(\QT)$
but belongs to $H^s\left(0,1;H^1_0(0,1)\right)$ for any $s<1/4$; see~\cite[Sect.~7.1]{SchoetzauSchwab}.
Therefore, the rates of convergence obtained cannot be predicted by Theorem~\ref{theorem:a-priori:error-estimates}.

In Figure~\ref{fig:VEM-FEM-comparison},
we show the errors obtained with~$p = 1, 2$
on a sequence of uniform Cartesian meshes
for the proposed VEM
and on a sequence of structured triangular meshes
for the continuous finite element method in~\cite{steinbach2015CMAM}.
The continuous finite element method does not converge in the~$Y$-norm,
while the error~$\EcalY$ of the proposed VEM converges with order~$\mathcal{O}(h^{1/4})$.
For the computation of the error,
we truncate the series~\eqref{incompatible-data-solution} at~$n = 250$.
\begin{figure}[ht]
\centering
\subfloat[Proposed VEM]{
\includegraphics[width = 2.5in]{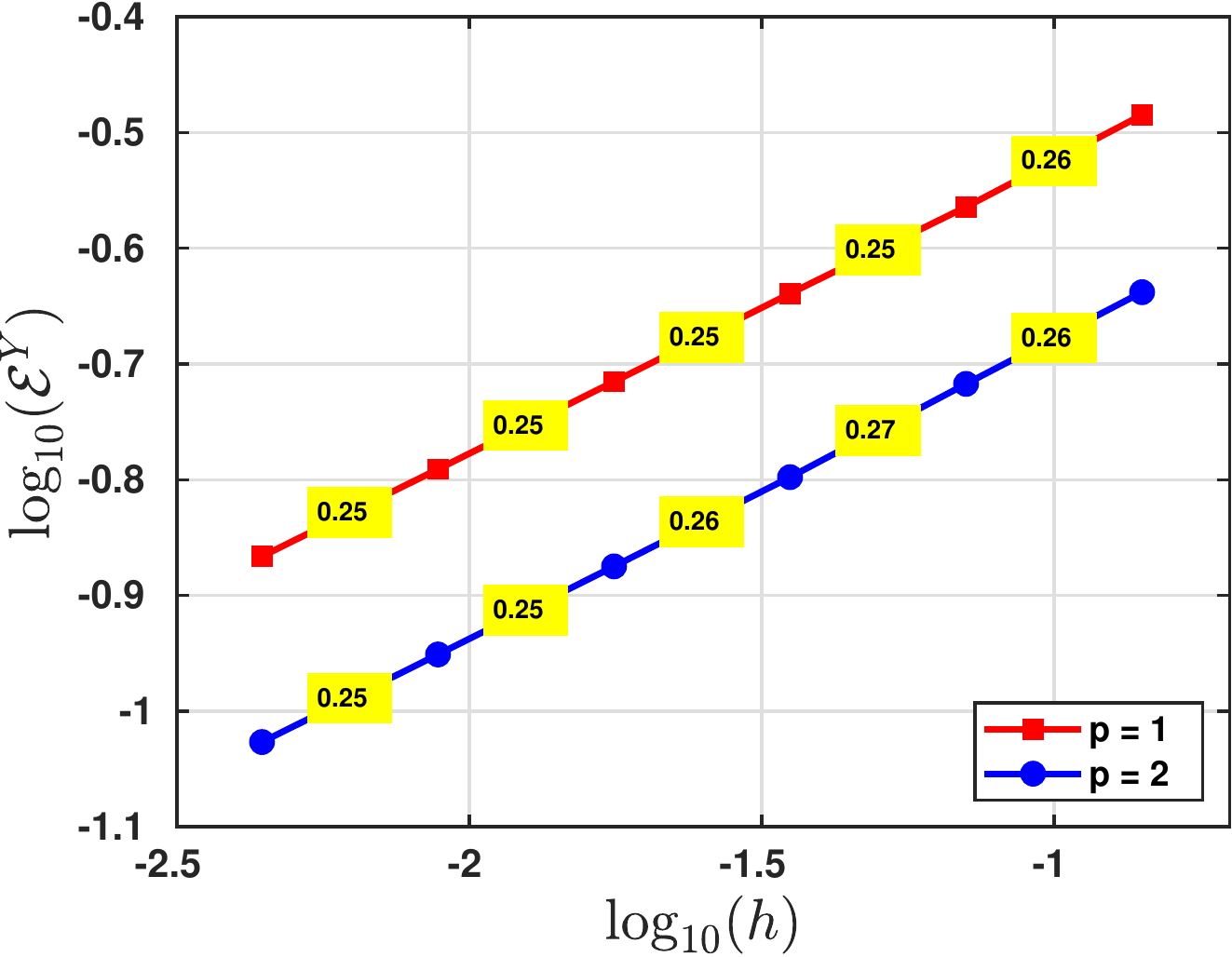}
}
\hspace{0.6cm}
\subfloat[Continuous finite element method]{
\includegraphics[width = 2.5in]{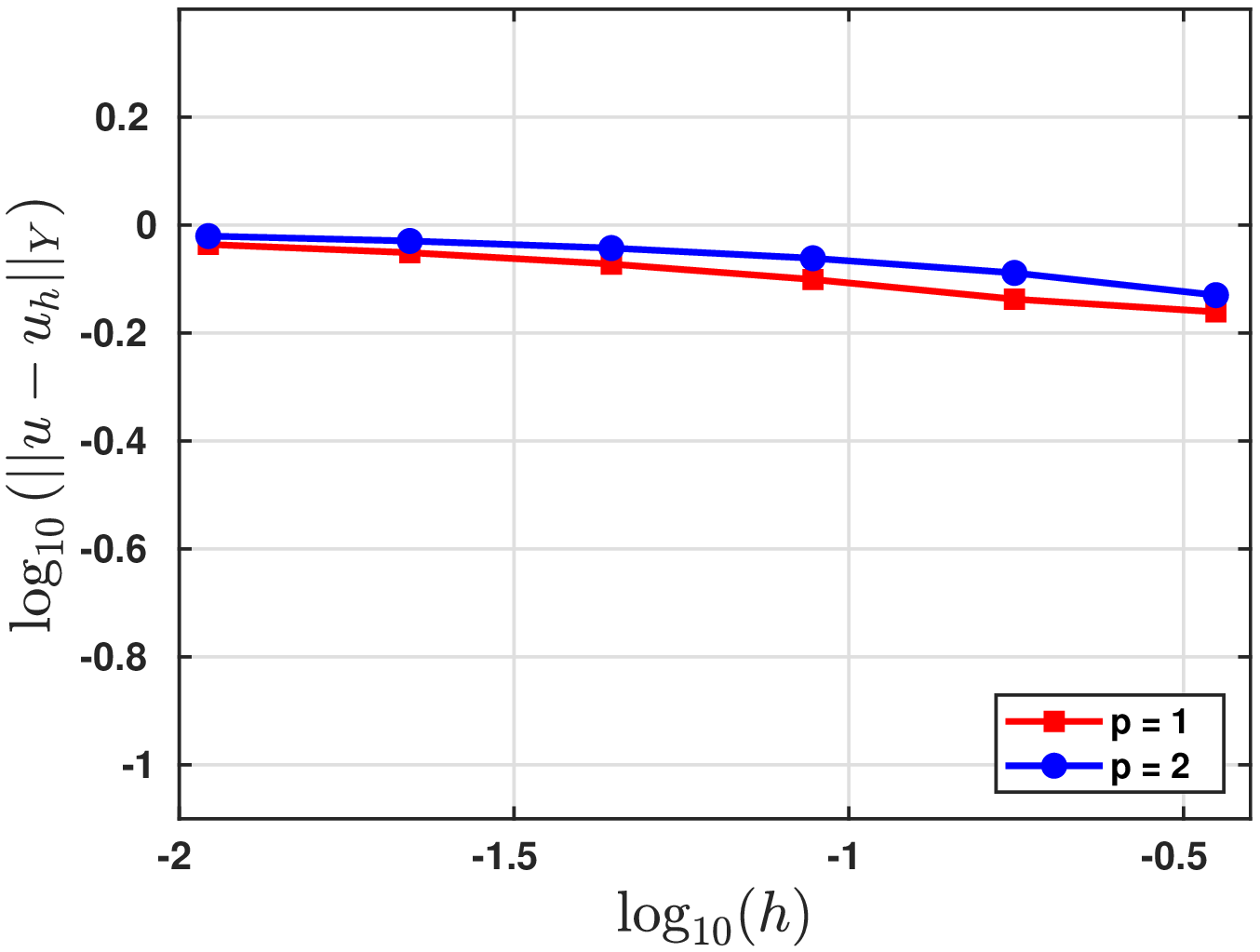}
}
\caption{$\h$-convergence for the test case with exact solution~\eqref{incompatible-data-solution} 
with incompatible initial and boundary conditions.}
\label{fig:VEM-FEM-comparison}
\end{figure}
%

\subsubsection{Increasing the degree of approximation}
We are also interested in the performance of the $\p$-version of the method,
i.e., we fix a mesh and increase the degree of approximation.
This is worth investigating 
also in view of the design of~$\h\p$ refinements.
We consider the smooth solution test case from Figure~\ref{exp:smooth-solution}
with a fixed mesh with~$\htime = \hEx = 0.1$.
The results shown in Figure~\ref{fig:p-convergence} in \emph{semilogy} scale.
We observe the expected exponential convergence 
in terms of the square root of $N_{DoFs}$ for all the VEM errors.
\begin{figure}[ht]
\centering
{
\includegraphics[width = 2.5in]{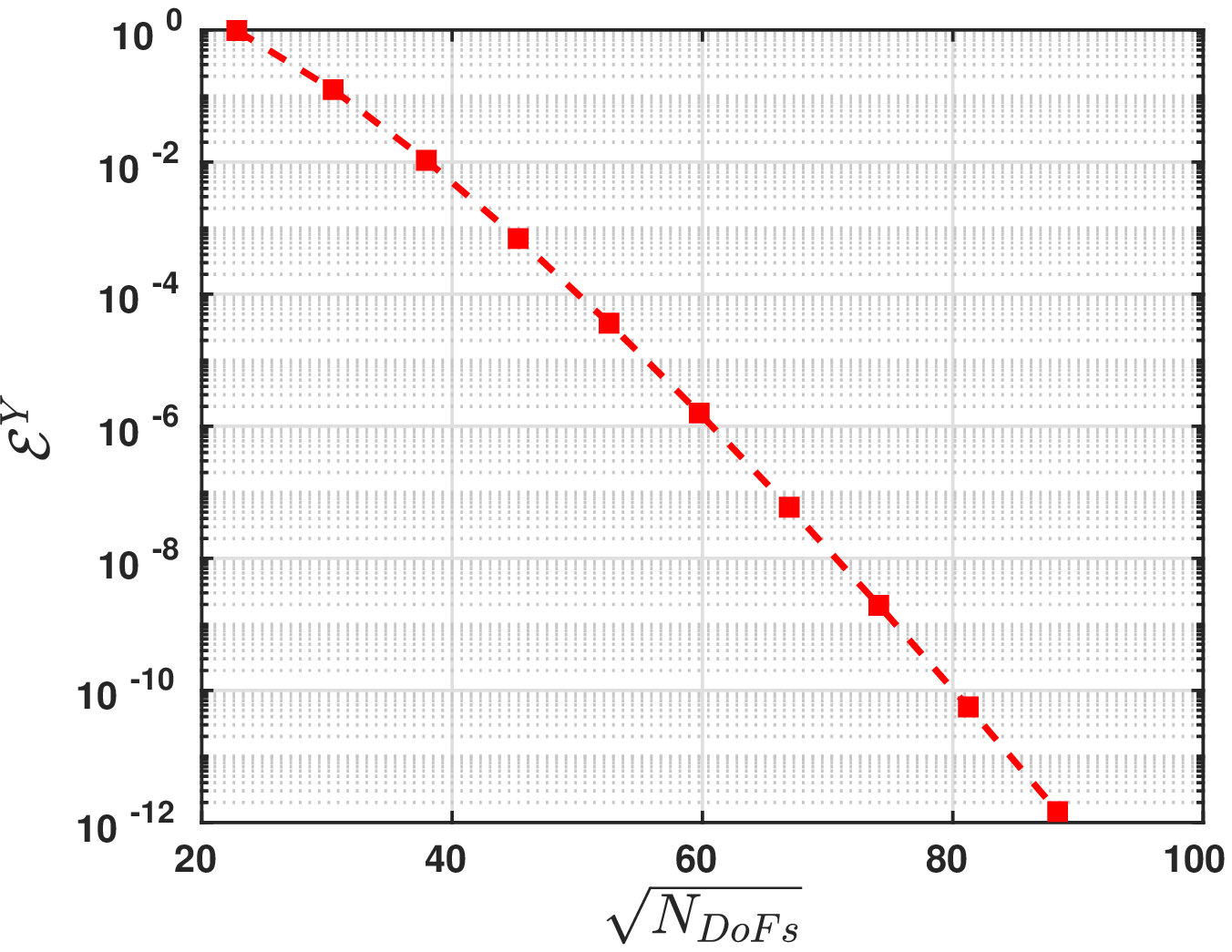}}
\hspace{0.6cm}
{
\includegraphics[width = 2.5in]{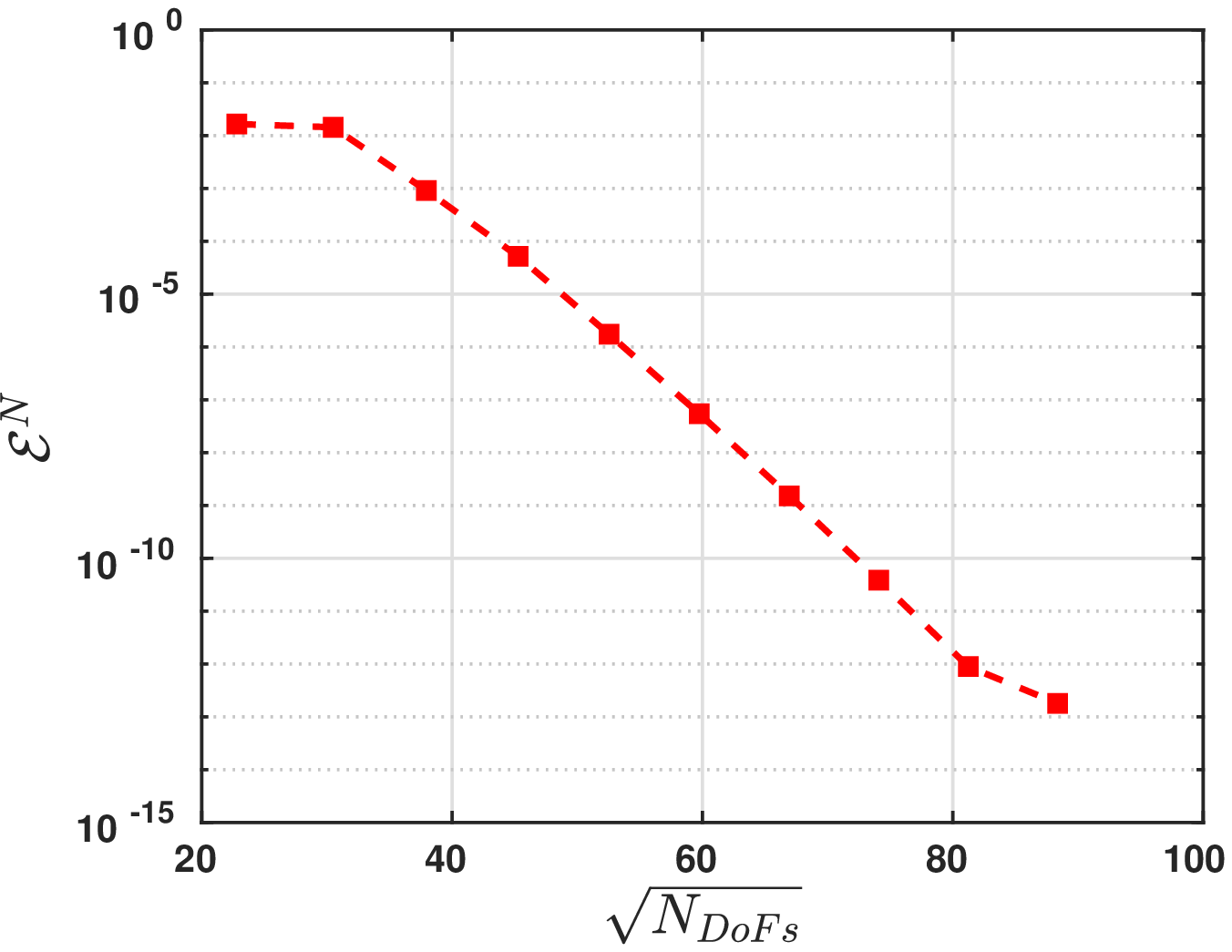}}\\
{
\includegraphics[width = 2.5in]{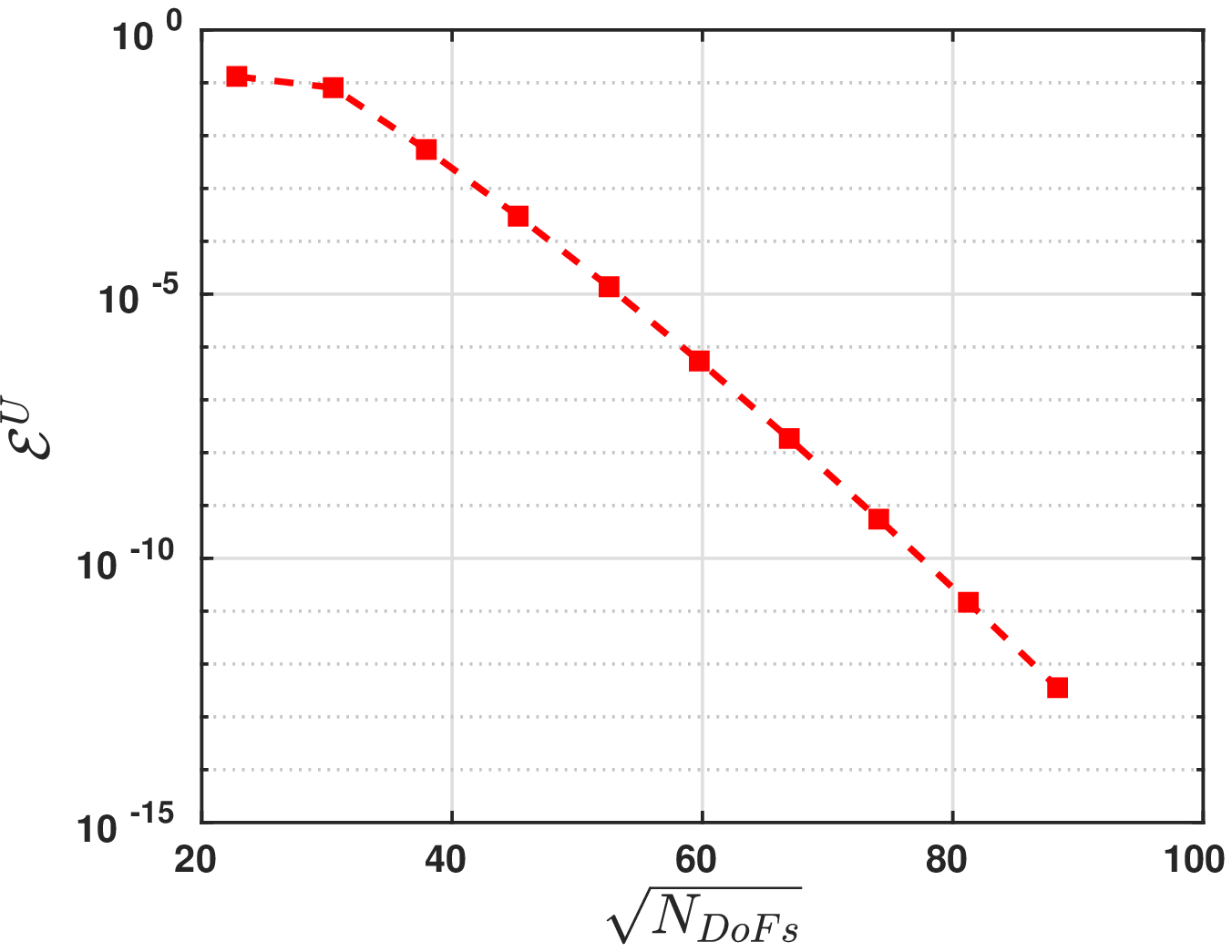}}
\hspace{0.6cm}
{
\includegraphics[width = 2.5in]{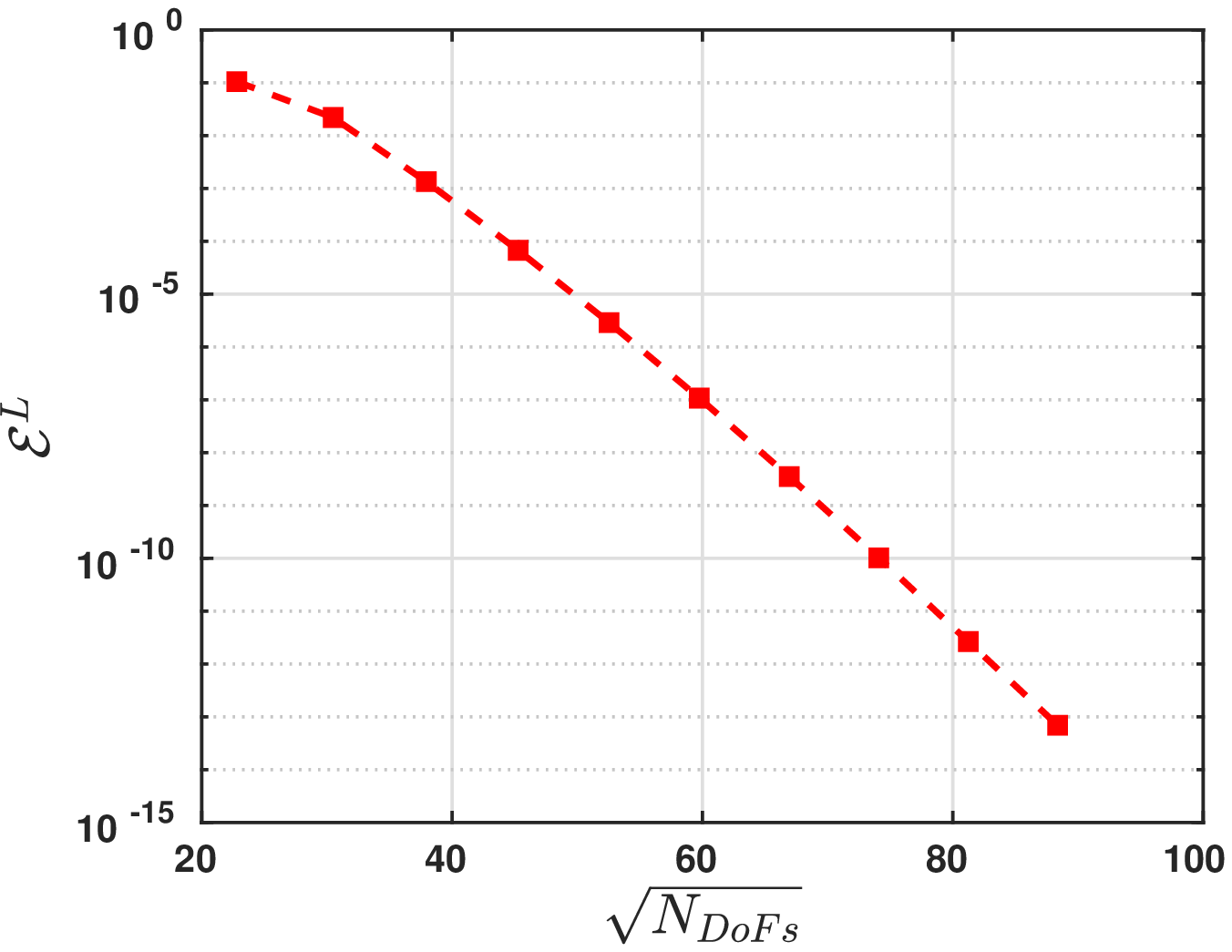}}
\caption{$p$-convergence of the errors in~\eqref{exact-errors} for the test case with smooth solution in~\eqref{smooth-test-case}.}
\label{fig:p-convergence}
\end{figure}

\subsection{Results in (2+1)-dimension}  \label{subsection:2+1}
We use tensor-product-in-time meshes and uniform partitions of the time interval~$(0, T)$,
and discretize the spatial domain~$\Omega$
with sequences of quadrilateral meshes
such as that in Figure~\ref{fig:last} (left panel).
We checked that the method passes the patch test also in the (2+1) dimensional case.
We do not report the results for the sake of brevity.

On~$\QT = (0, 1)^2 \times (0, 1)$,
we consider 
\begin{equation} \label{smooth-test-case-2+1}
u(\x, t) = \exp(-t) \sin(\pi x_1) \sin(\pi x_2).
\end{equation}
In Figure~\ref{fig:last} (right panel),
we display the rates of convergence using different values of~$p$
and observe the expected rates of convergence for the error~$\EcalY$.

\begin{figure}[!ht]
\centering
{
\includegraphics[width = 2.5in, height = 2.1in]{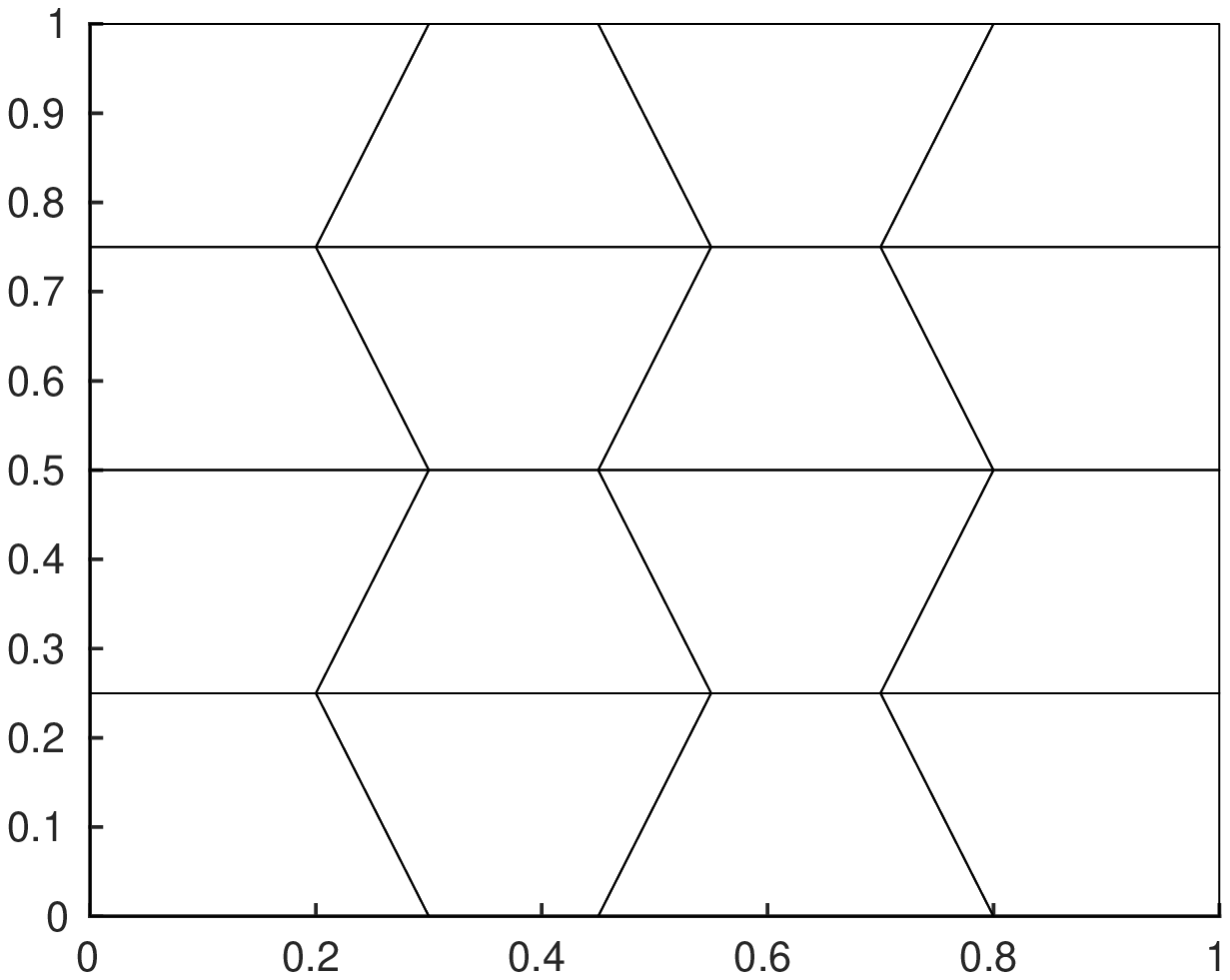} 
}
{ 
\includegraphics[width = 2.5in]{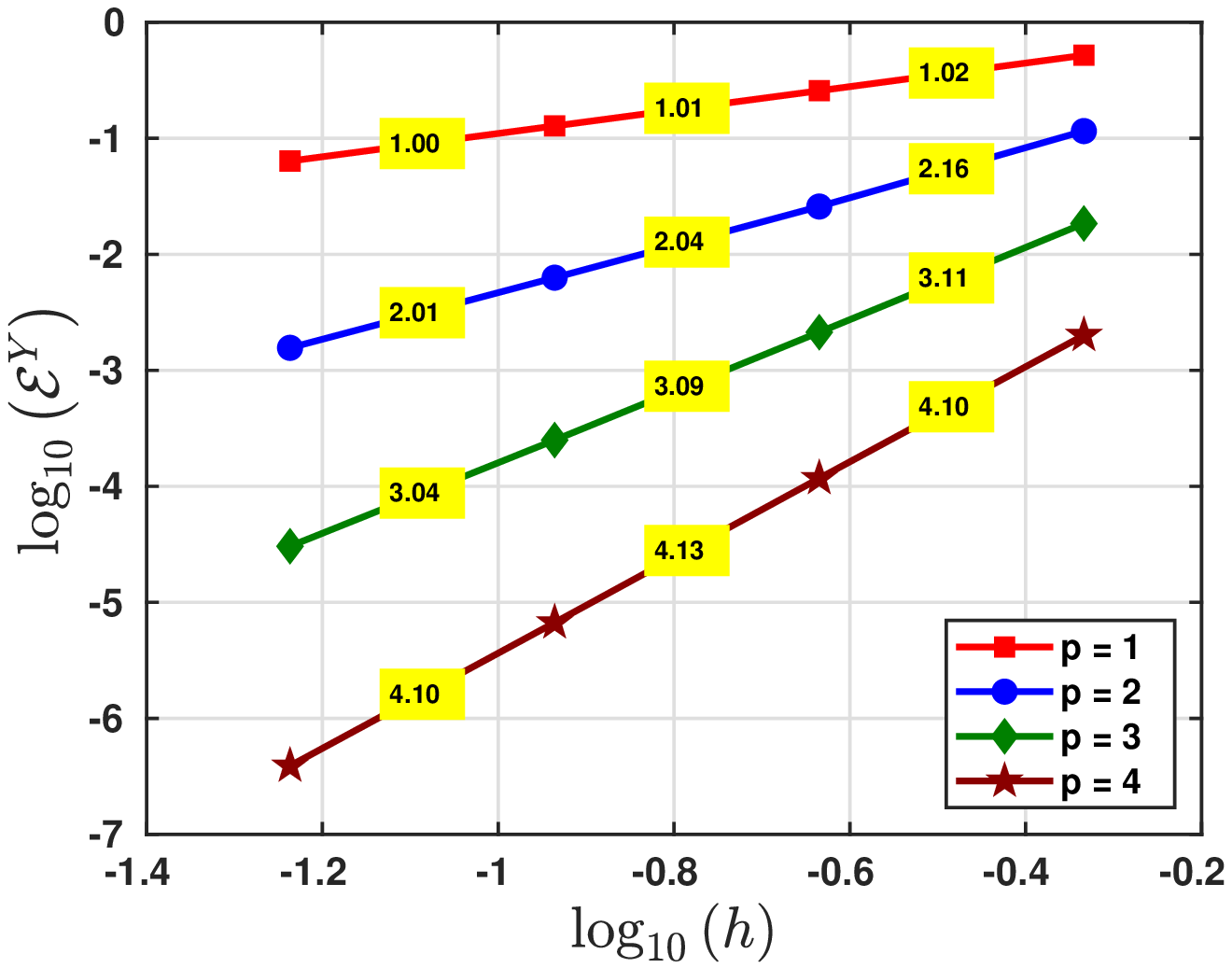}
}
\caption{\emph{Left panel}:
example of mesh for the 2-dimensional spatial domain used in the numerical experiments.
\emph{Right panel}: $\h$-convergence for the test case with smooth solution~\eqref{smooth-test-case-2+1}.}
\label{fig:last}
\end{figure}

\section{Conclusions} \label{section:conclusion}
We designed and analyzed a space-time virtual element method for the heat equation
based on a standard Petrov-Galerkin variational formulation.
The advantages of using the proposed space-time VEM
over standard space-time finite element methods
are that it 
allows for decomposing the linear system stemming from the method
into smaller systems associated with different time slabs;
can be modified into a Trefftz variant;
permits the treatment of incompatible initial and boundary data.
We proved well posedness of the method and optimal \textit{a priori} error estimates.
Numerical results validate the expected rates of convergence.

In~\cite{Gomez-Mascotto-Perugia:2023}, 
the method introduced in this paper has been extended to more general prismatic meshes
with hanging facets and variable degrees of accuracy,
enabling the implementation of $\h\p$-adaptive mesh refinements.
Tests of an adaptive procedure driven by a residual-type error indicator are also presented there.

\section*{Acknowledgements}
The authors have been funded by the Austrian Science Fund (FWF) through the projects F~65 (I.~Perugia)
and P~33477 (I. Perugia, L. Mascotto),
by the 
Italian 
Ministry of University and Research 
through the PRIN project ``NA-FROM-PDEs''
(A. Moiola, S. G\'omez),
and the ``Dipartimenti di Eccellenza'' Program (2018-2022) - Dept. of Mathematics, University of Pavia (A. Moiola).


\bibliographystyle{plain}

\end{document}